\newtheorem{theorem}{Theorem}
\numberwithin{theorem}{section}
\newtheorem{corollary}[theorem]{Corollary}
\newtheorem{lemma}[theorem]{Lemma}
\newtheorem{proposition}[theorem]{Proposition}
\theoremstyle{definition}
\newtheorem{definition}[theorem]{Definition}
\newtheorem{example}[theorem]{Example}
\newcommand{\R}{\mathbb{R}}
\newcommand{\N}{\mathbb{N}}
\newcommand{\Q}{\mathbb{Q}}
\DeclareMathOperator{\aff}{aff}
\DeclareMathOperator{\cl}{cl}
\DeclareMathOperator{\cone}{cone}
\DeclareMathOperator{\core}{core}
\DeclareMathOperator{\co}{co}
\DeclareMathOperator*{\dbigcup}{\dot \bigcup}
\DeclareMathOperator{\lbd}{lbd}
\DeclareMathOperator{\lcl}{lcl}
\DeclareMathOperator{\lin}{lin}
\DeclareMathOperator{\linspace}{linspace}
\DeclareMathOperator{\lspan}{span}
\DeclareMathOperator{\icr}{icr}
\DeclareMathOperator{\pri}{pri}
\newcommand{\proplhd}{%
  \mathrel{\ooalign{$\lneq$\cr\raise.22ex\hbox{$\lhd$}\cr}}}
\title{The intrinsic core and minimal faces of convex sets in general vector spaces}
\author{R. D\'iaz Mill\'an\thanks{Deakin University, Melbourne, Australia} { and} Vera Roshchina\thanks{UNSW Sydney, Australia}}
\begin{document}

\maketitle


\begin{abstract} Intrinsic core generalises the finite-dimensional notion of the relative interior to arbitrary (real) vector spaces. Our main goal is to provide a self-contained overview of the key results pertaining to the intrinsic core and to elucidate the relations between intrinsic core and facial structure of convex sets in this general context.

We gather several equivalent definitions of the intrinsic core, cover much of the folklore, review relevant recent results and present examples illustrating some of the phenomena specific to the facial structure of infinite-dimensional sets.
\end{abstract}

\medskip

\noindent{\textbf{Keywords:} intrinsic core, pseudo-relative interior, inner points, convex sets, general vector spaces, minimal faces}

\medskip

\noindent{\textbf{MSC2020 Classification:} 
46N10
, 52-02
, 52A05. 
}

\section{Introduction}

The relative interior of a convex set in a finite-dimensional real vector space is the interior of this convex set relative to its affine hull; a direct generalisation of this notion to real vector spaces is the intrinsic core,  introduced in \cite{kleepart1} and developed in \cite{Holmes} (also called pseudo-relative interior \cite{BorweinGoebel}, the set of inner points \cite{RelintInnerPoints} and the set of weak internal points \cite{DYE1992983}). 

In contrast to the relative interior in finite dimensions, intrinsic core may be empty for fairly regular sets, which was a key motivation for introducing the notion of quasi-relative interior \cite{BorweinLewisPartiallyFinite} in the context of topological vector spaces. Even though the quasi-relative interior is a very interesting mathematical object (for instance, \cite{ZalinescuThreePb, ZalinescuOnTheUse}  study duality and separation in the context of quasi-relative interior and resolve a number of open questions), and has much practical importance, we find that limiting our study to the interplay between the intrinsic core and facial structure already offers rich material and provides a valuable perspective on the structure of convex sets in general vector spaces. Such a narrowly focused overview is long overdue: the well-known results and examples discussed here are scattered in the literature, and it may be difficult to find neat references for widely known statements. Besides, we were able to provide some new insights 
and discuss several examples of interesting infinite-dimensional convex sets from the perspective of facial structure (for instance, see Example~\ref{eg:Hilbert}, where we construct uncountable chains of faces of the Hilbert cube).

Our take on this topic is consistent with the approach of several recent papers that focus on the intrinsic core. For instance, \cite{UnitBall, InnerStructure, MinExp} study the facial structure of convex sets in linear vector spaces with relation to the intrinsic core, \cite{cones} is specifically focused on the intrinsic core of convex cones, and \cite{DangEtAl} reviews the basic properties of a (less general) notion of the algebraic core, with an emphasis on separation. Notably \cite{cones} and \cite{DangEtAl} do not mention the facial structure that is central to our exposition.  

We begin with a discussion on faces of convex sets in Section~\ref{sec:faces}, obtaining a new characterisation of minimal faces in terms of the lineality subspace of the cone of feasible directions (Proposition~\ref{prop:minface}), leading to a (known) characterisation of a face as an intersection of line segments (Corollary~\ref{cor:minfaceunion}). We then talk about chains of faces and present an example of a convex set with uncountable chains of faces (Example~\ref{eg:Hilbert}).

In Section~\ref{sec:intrinsic} we discuss several equivalent definitions of intrinsic core, and focus on characterising the intrinsic core in terms of minimal faces: specifically, in Proposition~\ref{prop:charicore} we show that the intrinsic core of a convex set coincides with the set of points for which the minimal face of these points is the entire set $C$, using Proposition~\ref{prop:minface}. We also provide an extensive list of properties of the intrinsic core, with self-contained proofs and references. We end the section with the decomposition result (a convex set is a disjoint union of the intrinsic cores of its faces, Theorem~3.14).

In Section~\ref{sec:algcl} we discuss the notions of linear closure and boundary and relate these to the intrinsic core and faces of convex sets. We also talk about separation and supporting hyperplanes in general real vector spaces, outlining classic separation results pertaining to the setting of general vector spaces, and highlighting the role of intrinsic core in convex separation.We then provide a brief summary in Section~\ref{sec:summary}.


\section{Convex sets and their faces}\label{sec:faces}

Recall that a subset $C$ of a real vector space $X$ is convex if for any two points $x,y\in C$ we have $[x,y]\subseteq C$, where $[x,y]$ is the line segment connecting the points $x$ and $y$,
$$
[x,y] = \{\alpha x+ (1-\alpha) y\, |\, \alpha \in [0,1]\}.
$$ 

We will also use the notation $(x,y)$ to denote the open line segments connecting $x$ and $y$, also $[x,y)$ and $(x,y]$ for the relevant half-open segments. Note that algebraically it makes perfect sense to consider degenerate line segments of the form $(x,x) = (x,x] = [x,x)= [x,x]$, since in this case 
\[
(x,x) = \{t x+ (1-t) x\,|\,  t\in (0,1)\} = \{x\}.
\]
Even though this notation may appear confusing geometrically, it allows to treat the endpoints of all line segments including the degenerate singleton ones in a unified fashion, and hence streamline much of our discussion.

Throughout the paper we assume that $X$ is a real vector space, and that all the sets we consider live in this space (unless stated otherwise). We reiterate this assumption in some of the statements for the ease of reference.

A convex subset $F$ of a convex set $C$ is called a {\em face} of $C$ if for every $x\in F$ and every $y,z \in C$ such that $x\in (y,z)$, we have $y,z\in F$. The set $C$ itself is its own face, and the empty set is a face of any convex set $C$.  A face $F$ of $C$ is \emph{proper} if $F$ is nonempty and does not coincide with $C$. We write $F\lhd C$ for the faces $F$ that do not coincide with $C$ and $F\unlhd C$ for all faces $F$ of $C$.  Faces that consist of one point only are called \emph{extreme points}. Some faces of two-dimensional convex sets are shown schematically in Figure~\ref{fig:faces}.
\begin{figure}[ht]
    \centering
    \includegraphics[width=0.8\textwidth]{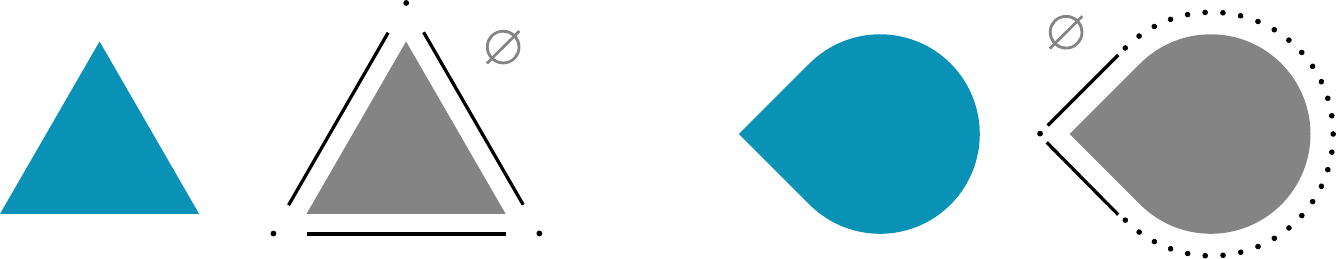}
    \caption{Faces of two-dimensional convex sets: proper faces are shown in black.}
    \label{fig:faces}
\end{figure}

The following two statements are well-known and follow from the definition of a face.

\begin{lemma}\label{lem:faceofaface}
Let $C$ be a convex subset of a real vector space $X$. If $F\unlhd C$ and $E\subseteq F$, then $E\unlhd F$ if and only if $E\unlhd C$.
\end{lemma}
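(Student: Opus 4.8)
The plan is to prove both implications directly from the definition of a face, unwinding the quantifiers carefully. Throughout, fix $E \subseteq F \subseteq C$ with $E$ convex (note that being a face of either $F$ or $C$ presupposes convexity of $E$, and this hypothesis is symmetric, so it need not be treated separately).

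\textbf{The ``only if'' direction.} Suppose $E \unlhd F$; I want $E \unlhd C$. Take $x \in E$ and $y,z \in C$ with $x \in (y,z)$. Since $F \unlhd C$ and $x \in F$, the defining property of a face applied to $F$ gives $y,z \in F$. Now $x \in E$, $y,z \in F$, and $x \in (y,z)$, so applying the defining property of $E \unlhd F$ yields $y,z \in E$. Hence $E \unlhd C$.

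\textbf{The ``if'' direction.} Suppose $E \unlhd C$; I want $E \unlhd F$. First, $E \subseteq F$ by hypothesis, and $E$ is convex. Take $x \in E$ and $y,z \in F$ with $x \in (y,z)$. Since $F \subseteq C$, we have $y,z \in C$, so $x \in E$, $y,z \in C$, $x \in (y,z)$, and $E \unlhd C$ gives $y,z \in E$. Hence $E \unlhd F$.

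This argument is essentially a one-line diagram chase once the definitions are in hand, so I do not anticipate any genuine obstacle; the only point requiring a moment's care is the treatment of the empty face and degenerate segments. If $E = \emptyset$ both sides hold trivially (the empty set is a face of any convex set, as noted in the text), and the convention $(x,x) = \{x\}$ introduced earlier means the quantifier ``$y,z \in C$ with $x \in (y,z)$'' is already handled uniformly, so no separate case analysis is needed. One could remark that the ``only if'' direction is precisely the statement that a face of a face is a face, while the ``if'' direction is the (easy) observation that a face of $C$ contained in $F$ remains a face when the ambient set is shrunk to $F$.
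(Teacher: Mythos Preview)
Your proof is correct and follows essentially the same approach as the paper: both arguments reduce each implication to the defining property of a face, using $F\unlhd C$ to pull segment endpoints from $C$ into $F$ for one direction, and the inclusion $F\subseteq C$ to push them from $F$ into $C$ for the other. The paper merely compresses the two directions into a single paragraph, whereas you write them out separately and add remarks on the degenerate cases.
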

\begin{proof} Suppose that $C$ is a convex set, $F\unlhd C$ and $E\subseteq F$. By the definition of a face, $E\unlhd C$ is equivalent to having for any $x,y \in C$ with  $(x,y)\cap E \neq \emptyset$ that $x,y\in C$. However since $F$ is a face of $C$, and $E\subseteq F$, any such pair $x,y$ must also be in $F$ (and vice versa, as $x,y\in F\subset C$). Hence, $E\unlhd C$ is equivalent to $E\unlhd F$. 
\end{proof}

\begin{lemma}\label{lem:intersection} Let $\mathcal{F}$ be a collection of faces of a convex set $C\subseteq X$, that is, every member of the set $\mathcal{F}$ is a face of $C$. Then
$$
E := \bigcap_{F\in \mathcal{F}} F
$$
is also a face of $C$. 
\end{lemma}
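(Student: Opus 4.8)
The plan is to verify the defining property of a face directly for $E=\bigcap_{F\in\mathcal F}F$. First I would note that $E$ is convex, being an intersection of convex sets (each face is convex by definition), and that $E\subseteq C$. If $\mathcal F$ is empty the intersection is conventionally $C$ itself, which is a face of $C$; so I may assume $\mathcal F\neq\emptyset$.

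Now take any $x\in E$ and any $y,z\in C$ with $x\in(y,z)$. For each individual $F\in\mathcal F$ we have $x\in E\subseteq F$, and since $F$ is a face of $C$, the defining property of a face gives $y,z\in F$. As this holds for every $F\in\mathcal F$, we conclude $y,z\in\bigcap_{F\in\mathcal F}F=E$. This is exactly the condition for $E$ to be a face of $C$, completing the argument.

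I expect no real obstacle here: the proof is a one-line application of the definition, plus the observation that an arbitrary intersection of convex sets is convex. The only point worth a remark is the degenerate case where the intersection is empty (which is fine, since the empty set is declared to be a face of any convex set) and the edge case $\mathcal F=\emptyset$, which I would either handle explicitly or dismiss as a vacuous convention. I would keep the write-up to two or three sentences, mirroring the style of the proof of Lemma~\ref{lem:faceofaface}.
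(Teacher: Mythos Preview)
Your argument is correct and follows essentially the same route as the paper: note that $E$ is a convex subset of $C$, then observe that any open segment $(y,z)$ meeting $E$ meets every $F\in\mathcal F$, whence $y,z\in F$ for all $F$ and so $y,z\in E$. Your extra remarks on the degenerate cases $\mathcal F=\emptyset$ and $E=\emptyset$ are harmless additions not present in the paper's proof.
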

\begin{proof} First notice that $E$ is a convex subset of $C$. Now, if for some $x,y\in C$ the open line segment $(x,y)$ intersects $E$, then it intersects each $F \in \mathcal{F}$, and hence $[x,y]\subseteq E$, so $E$ is a face of $C$ by definition.
\end{proof}

\subsection{Minimal faces}

For any subset $S$  of a convex set $C$ there exists a unique minimal (in terms of the set inclusion) face $F\unlhd C$ that contains $S$. We can define minimal faces of a set in a constructive way, with the help of Lemma~\ref{lem:intersection}.

Let $S\subseteq C$, where $C$ is a  convex subset of  a real vector space $X$. The {\em minimal face}\index{minimal face}  of $C$ containing $S$ or just the minimal face of $S$ in $C$, is 
$$
F_{\min} (S,C) := \bigcap \{ F\,|\, F\unlhd C, S\subseteq F \}.
$$	

The set $F_{\min} (S,C)$ is a face due to Lemma~\ref{lem:intersection}, and it is the smaller face (with respect to set inclusion) that contains $S$. When $S=\{x\}$ is a singleton we use the notation $F_{\min}(x,C) = F_{\min}(\{x\},C)$. 

By $\cone C$ we denote the conic hull of $C\subseteq X$, that is, the set of all finite nonnegative combinations of points in $C\subseteq X$.
\[
\cone C = \left\{\sum_{i\in I} \alpha_i x_i\, |\, x_i\in C, \, \alpha_i \geq 0\quad \forall i \in I, \; |I|<\infty \right\}.
\]
The conic hull $\cone C$ of any set $C\subseteq X$ is a convex cone (it is convex and positively homogeneous, $\lambda x\in K$ for all $x\in K$ and $\lambda>0$). When $C$ is convex, we have $\cone C = \R_{+} C = \{\alpha x\,|\, x\in C, \alpha \geq 0\}$. In particular, when $C$ is convex and $x\in C$, then $\cone (C-x)$ is the cone of feasible directions of $C$ at $x$, that is, it consists of the rays along which one can move away from $x$ while staying within $C$.
 
The \emph{lineality space} $\linspace K$ of a nonempty convex cone $K\subseteq X$ is the largest linear subspace contained in $K$ (the lineality space can be defined for general convex sets, but we will only need this notion for cones). The following facts are well-known.

\begin{proposition}\label{prop:lineality} Let $K$ be a convex cone, $K\subseteq X$. Then
\begin{enumerate}[label=(\roman*)]
    \item for any $x\in K$, we have  $x\in \linspace K$ if and only if $-x\in K$; \label{item:linchar}
    \item the lineality space $\linspace K$ is nonempty if and only if $0\in K$. \label{item:nonemptylin}
    \end{enumerate}
\end{proposition}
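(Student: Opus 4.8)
The plan is to route everything through the single set
\[
L := \{x \in K \,:\, -x \in K\},
\]
and show $L$ is precisely $\linspace K$ when $0\in K$, and that both are empty when $0\notin K$. I would dispose of the case $0\notin K$ first: if some $x\in K$ satisfied $-x\in K$, then by convexity $0=\tfrac12 x+\tfrac12(-x)\in K$, a contradiction, so $L=\emptyset$; moreover no linear subspace can lie inside $K$ (every linear subspace contains the origin), so $\linspace K=\emptyset$ too, and in this regime \ref{item:linchar} holds because both of its sides are false while \ref{item:nonemptylin} holds because both of its sides are false. So from there on I would assume $0\in K$.

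Under the assumption $0\in K$, the heart of the argument is to check that $L=\linspace K$. First, $L$ is a linear subspace of $X$: it contains $0$; it is closed under scalar multiplication by splitting into $\lambda>0$ (positive homogeneity applied to both $x$ and $-x$), $\lambda=0$ (using $0\in K$), and $\lambda<0$ (writing $\lambda x=(-\lambda)(-x)$ and using positive homogeneity); and it is closed under addition since for $x,y\in L$ convexity gives $\tfrac12 x+\tfrac12 y\in K$ and positive homogeneity then gives $x+y\in K$, with the same argument applied to $-x,-y$. Second, any linear subspace $M\subseteq K$ satisfies $M\subseteq L$, because $x\in M$ forces $-x\in M\subseteq K$. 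Hence $L$ is a linear subspace contained in $K$ that contains every linear subspace contained in $K$; so a largest one exists and equals $L$, i.e.\ $L=\linspace K$.

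With this identification both parts follow immediately. For \ref{item:linchar}: given $x\in K$, the condition $x\in\linspace K=L$ is by definition of $L$ exactly the condition $-x\in K$. For \ref{item:nonemptylin}: we have just shown $0\in K$ yields $0\in L=\linspace K$, so $\linspace K\neq\emptyset$; conversely a nonempty $\linspace K$, being a linear subspace, contains $0$, hence $0\in K$ (and the $0\notin K$ direction was handled above).

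There is no genuinely hard step here: the whole proof uses only convexity and positive homogeneity. The one point that deserves care — and that a hasty argument might silently assume — is the well-definedness of $\linspace K$ itself: speaking of \emph{the} largest linear subspace contained in $K$ presupposes that a greatest such subspace exists, and the verification that $L$ is a subspace that contains all the others is exactly what supplies this; it is also what makes the nontrivial (``only if'') direction of \ref{item:linchar} work.
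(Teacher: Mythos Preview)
Your proof is correct. The route differs from the paper's in a useful way. The paper treats the existence of $\linspace K$ as given and, for the nontrivial implication in \ref{item:linchar}, argues by contradiction: assuming $x,-x\in K$ but $x\notin\linspace K$, it forms $\lspan(\linspace K\cup\{x\})$ and checks that every element of this span lies in $K$, producing a strictly larger linear subspace inside $K$ and violating maximality. You instead define $L=\{x\in K:-x\in K\}$ up front, verify directly that $L$ is a linear subspace contained in $K$, and observe that every linear subspace $M\subseteq K$ satisfies $M\subseteq L$; this simultaneously establishes that a largest subspace exists and identifies it explicitly, after which both \ref{item:linchar} and \ref{item:nonemptylin} are read off from the definition of $L$. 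Your argument is a bit more self-contained (it does not silently assume a maximal subspace exists) and arguably cleaner; the paper's contradiction argument is essentially the same computation organised differently, extending the presumed $\linspace K$ by one vector rather than building $L$ all at once.
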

\begin{proof} The part \ref{item:nonemptylin} is straightforward: since $0$ must belong to every linear subspace, if $0\notin K$, then no linear subspace is contained in $K$. Likewise, if $0\in K$, then $\{0\}$ is a trivial linear subspace contained in $K$, and the lineality space can only be larger than that.

The necessary part of \eqref{item:linchar} is evident: assuming $x\in \linspace K\subseteq K$, and recalling that $\linspace K$ is a linear subspace, we must have $-x\in \linspace K$. 

To show the sufficiency, that is, if $x,-x\in K$, then $x\in \linspace K$, first observe that it follows from convexity that $0\in [-x,x]\subset K$, and hence by \ref{item:linchar} $ \linspace K \neq \emptyset$. Assume that nevertheless $x\notin \linspace K$. Let 
\[
L = \lspan (\linspace K\cup\{x\}).
\]
For any $w\in L$ we have $w = v + u$, with $v\in \linspace K\subseteq K$ and $u = \lambda x$ for some $\lambda \in \R$. Now $\lambda x \in K$, irrespective of the sign of $\lambda$, since $K$ is a cone and both $x$ and $-x$ are in $K$.  Now $u,v\in K$, hence $w = u+v\in K$ (as $u+v = 2 (\frac 1 2 u + \frac 1 2 v) \in K$ by convexity and homogeneity), and we have demonstrated that $L$ is a linear subspace contained in $K$ that is strictly larger than $\linspace K$, which is impossible.
\end{proof}

It follows from \ref{item:linchar} that the lineality space of a convex cone is uniquely defined, since it is fully characterised by the set of points that belong to the cone along with their negatives.

Also recall that the \emph{Minkowski sum} of two (convex) sets $C,D\subseteq X$ is 
\[
C+ D = \{x+y\, |\, x\in C, y\in D\}.
\]
When one of the sets is a singleton, we abuse the notation and write $C+x$ for $C+\{x\}$. 
We show next that the minimal face of any point,  can be characterised in terms of the linearity space of the cone of feasible directions of the point.

\begin{proposition}\label{prop:minface} Let $C\subseteq X$ be a nonempty convex set and let $x\in C$. Then 
\[
F_{\min} (x,C) = C\cap (\linspace \cone (C-x)+x).
\]
\end{proposition}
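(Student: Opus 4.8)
The plan is to prove the two set inclusions separately, after introducing the shorthand $K := \cone(C-x)$ and $L := \linspace K$. Since $x\in C$ we have $0\in C-x\subseteq K$, so by Proposition~\ref{prop:lineality}\ref{item:nonemptylin} the subspace $L$ is nonempty and contains the origin; in particular $x = 0+x \in C\cap(L+x)$, so both sides of the claimed identity contain $x$. It is also convenient to recall that, $C$ being convex, $K = \R_{+}(C-x) = \{\alpha(c-x)\,|\,c\in C,\ \alpha\ge 0\}$.

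For the inclusion $F_{\min}(x,C)\subseteq C\cap(L+x)$, I would show that $G := C\cap(L+x)$ is itself a face of $C$; since it contains $x$ and $F_{\min}(x,C)$ is by definition the smallest face containing $x$, this gives the inclusion. Convexity of $G$ is immediate, as $L+x$ is affine. To check the face property, take $y,z\in C$ and $w\in(y,z)\cap G$, say $w = \lambda y+(1-\lambda)z$ with $\lambda\in(0,1)$. Writing $x-w = \lambda(x-y)+(1-\lambda)(x-z)$ and using $w-x\in L$, hence $x-w\in L\subseteq K$, together with $y-x,z-x\in C-x\subseteq K$, one adds $\lambda(y-x)\in K$ to $x-w\in K$ to obtain $(1-\lambda)(x-z)\in K$, whence $x-z\in K$; since also $z-x\in K$, Proposition~\ref{prop:lineality}\ref{item:linchar} gives $z-x\in L$, i.e. $z\in G$. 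The symmetric argument (adding $(1-\lambda)(z-x)$ instead) gives $y\in G$. This cone manipulation is the main technical step, though it is short.

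For the reverse inclusion $C\cap(L+x)\subseteq F_{\min}(x,C)$, take $y\in C$ with $y-x\in L$. Then $x-y = -(y-x)\in L\subseteq K = \R_{+}(C-x)$, so $x-y = \alpha(c-x)$ for some $c\in C$ and $\alpha\ge 0$. If $\alpha=0$ then $y=x\in F_{\min}(x,C)$; otherwise rearranging gives $x = \tfrac{1}{1+\alpha}y + \tfrac{\alpha}{1+\alpha}c$, exhibiting $x$ as a point of the open segment $(y,c)$ with $y,c\in C$. Since $x\in F_{\min}(x,C)$ and the latter is a face, the definition of a face forces $y,c\in F_{\min}(x,C)$, in particular $y\in F_{\min}(x,C)$.

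Combining the two inclusions yields the claimed equality. The only place where anything beyond bookkeeping happens is the verification that $G$ is a face, which reduces entirely to Proposition~\ref{prop:lineality}; everything else is elementary rearrangement of convex combinations, so I do not anticipate any serious obstacle.
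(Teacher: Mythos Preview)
Your proof is correct. The inclusion $C\cap(L+x)\subseteq F_{\min}(x,C)$ is handled exactly as in the paper: exhibit $x$ as an interior point of a segment $[y,c]\subseteq C$ and invoke the face property.

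For the other inclusion, however, you take a genuinely different and cleaner route. The paper does not show directly that $G=C\cap(L+x)$ is a face; instead it shows that $F_{\min}(x,C)\cap(L+x)$ is a face, and to do so it invokes the geometric ``segment'' gadget of Proposition~\ref{prop:technical}: given $y\in F_{\min}(x,C)\cap(L+x)$ and $u,v\in C$ with $y\in(u,v)$, it produces an auxiliary point $z=x-tp\in C$ (using $p=y-x\in L$), then applies Proposition~\ref{prop:technical} to manufacture $w\in(v,z)$ with $x\in(u,w)$, and only then reads off $u-x,\,-(u-x)\in K$. Your argument bypasses Proposition~\ref{prop:technical} entirely: from $x-w\in L\subseteq K$ and $\lambda(y-x)\in K$ you add inside the convex cone $K$ to obtain $(1-\lambda)(x-z)\in K$, and Proposition~\ref{prop:lineality}\ref{item:linchar} finishes immediately. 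This is shorter, purely algebraic, and shows the stronger fact that $C\cap(L+x)$ itself (not merely its intersection with $F_{\min}(x,C)$) is a face. The paper's approach, on the other hand, keeps the argument in the language of line segments, which is consistent with the rest of the exposition and feeds into the proof of Corollary~\ref{cor:minfaceunion}.
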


Before moving on to the proof of Proposition~\ref{prop:minface}, we state and prove the following elementary technical proposition that will be used as a building block in several proofs.

\begin{proposition}\label{prop:technical} Let $a,b,u,v\in X$. If $a\in (u,v)$, then for any $c\in (a,b)$ there exists $w\in (v,b)$ such that $c\in (u,w)$.
\end{proposition}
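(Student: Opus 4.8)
The plan is to reduce the statement to an elementary computation with affine (barycentric) coefficients; geometrically this is just the observation that in the triangle with vertices $u,v,b$, with $a$ on the side $[u,v]$ and $c$ on the segment $[a,b]$, the line through $u$ and $c$ meets the side $[v,b]$ at the desired point $w$. Concretely: since $a\in(u,v)$, write $a=(1-s)u+sv$ for some $s\in(0,1)$, and since $c\in(a,b)$, write $c=(1-t)a+tb$ for some $t\in(0,1)$. Substituting the first expression into the second gives the explicit representation $c=(1-t)(1-s)\,u+(1-t)s\,v+t\,b$, an affine combination of $u,v,b$ with nonnegative coefficients summing to $1$.

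Next I would produce the candidate point and parameters. Set $\lambda:=1-(1-t)(1-s)$ and $\mu:=t/\lambda$, and define $w:=(1-\mu)v+\mu b$. The verification splits into two routine parts. First, the sign checks: $\lambda\in(0,1)$ is immediate because $(1-t)(1-s)\in(0,1)$; then $\mu>0$ is clear, and the inequality $\mu<1$ unwinds to $t<\lambda=1-(1-t)(1-s)$, equivalently $(1-t)(1-s)<1-t$, equivalently $s>0$, which holds by hypothesis. Hence $w\in(v,b)$, and $(1-\lambda)u+\lambda w$ is a legitimate point of $(u,w)$. Second, the identity: expanding $(1-\lambda)u+\lambda w=(1-\lambda)u+\lambda(1-\mu)v+\lambda\mu b$ and matching coefficients against the expression for $c$ above — the coefficient of $b$ is $\lambda\mu=t$ by the choice of $\mu$, the coefficient of $u$ is $1-\lambda=(1-t)(1-s)$ by the choice of $\lambda$, and the coefficient of $v$ matches automatically since on each side the three coefficients sum to $1$. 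This gives $c=(1-\lambda)u+\lambda w$ with $\lambda\in(0,1)$, i.e. $c\in(u,w)$.

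I do not expect a genuine obstacle here; the only point requiring a little care is the convention (introduced earlier) that degenerate segments such as $(x,x)=\{x\}$ are allowed, so that in degenerate configurations (for instance $u=v$, or $b$ equal to $u$ or $v$) the parameters $s,t$ need not be uniquely determined. The argument is robust to this: one simply fixes any admissible value of the free parameter, the coefficient-matching relations are polynomial identities in $s$ and $t$ and continue to hold verbatim, and the conclusion $w\in(v,b)$ remains valid under the degenerate-segment convention even when the two endpoints coincide. Thus the actual work amounts only to stating the choice of $\lambda,\mu,w$ and recording the two sign checks above.
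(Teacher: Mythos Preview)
Your proof is correct and follows essentially the same approach as the paper: both substitute the affine representations of $a$ and $c$, regroup the resulting convex combination of $u,v,b$ to isolate $u$, and read off $w$ as the appropriate convex combination of $v$ and $b$. Your version is slightly more explicit in verifying the range conditions $\lambda,\mu\in(0,1)$ (the paper simply declares them ``evident'') and in addressing the degenerate-segment convention, but the underlying computation is identical up to the relabelling $\alpha=1-s$, $\beta=1-t$.
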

\begin{proof} Let $a,b,u,v\in X$, and assume that $a\in (u,v)$ and $c\in (a,b)$ (see Fig.~\ref{fig:technical01}).
\begin{figure}[ht]
    \centering
    \includegraphics[width=0.36\textwidth]{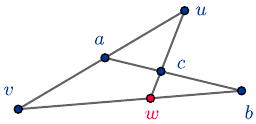}
    \caption{Illustration of the proof for Proposition~\ref{prop:technical}}
    \label{fig:technical01}
\end{figure}
There are $\alpha,\beta\in (0,1)$ such that 
\begin{equation}\label{eq:31575468}
a = \alpha u + (1-\alpha) v, \quad c = \beta a+ (1-\beta) b.
\end{equation}
We have from \eqref{eq:31575468} 
\begin{align}\label{eq:3243042}
c & = \beta (\alpha u + (1-\alpha) v)+ (1-\beta) b \notag\\
& = \alpha \beta u + (1-\alpha \beta) \left(\frac{\beta (1-\alpha)}{(1-\alpha \beta) } v + \frac{1-\beta }{(1-\alpha \beta) } b\right).
\end{align}
Let
\[
w = \frac{\beta (1-\alpha)}{1-\alpha \beta} v + \frac{1-\beta}{1-\alpha \beta} b.
\]
It is evident that $w\in (b,v)$, while from \eqref{eq:3243042} we have $c\in (u,w)$.
\end{proof}

\begin{proof}[Proof of Proposition~\ref{prop:minface}] We first show that 
\[
 C\cap (\linspace \cone (C-x)+x)\subseteq F_{\min} (x,C).
\]
Let $y \in  C\cap (\linspace \cone (C-x)+x)$. Then $y = x+ u $ for some $u\in \linspace \cone (C-x)$, and  by Proposition~\ref{prop:lineality}~\ref{item:linchar} we have $-u\in \linspace \cone (C-x)\subseteq \cone (C-x)$. This means that there exists some $t>0$  such that $x-t u\in C$. We have 
\[
\frac{t}{1+t} (x+u) + \frac{1}{1+t}(x- tu) 
=x,
\]  
hence $x\in (x+u, x-t u)$ and for every face $F$ of $C$ containing $x$ we must have $y = x+u\in F$. We conclude that

\[
y\in \bigcap_{\substack{F\unlhd C\\x\in F}} F = F_{\min}(x,C).
\]
It remains to show the converse,  that 
\[
F_{\min} (x,C) \subseteq C\cap (\linspace \cone (C-x)+x).
\]
Since $F_{\min}(x,C)\subseteq C$, it is then sufficient to show that 
\begin{equation}\label{eq:543545}
F_{\min} (x,C) = F_{\min}(x,C) \cap (\linspace \cone (C-x)+x).
\end{equation}
If we  prove that 
\begin{equation}\label{eq:453453}
F =  F_{\min}(x,C) \cap (\linspace \cone (C-x)+x)
\end{equation}
is a face of $C$, then from $x\in F$, we must have $F\subseteq F_{\min}(x,C)$, hence $F= F_{\min}(x,C)$, and so \eqref{eq:543545} holds.  

Let $y\in F$ and $u,v\in C$ and $\alpha \in (0,1)$ be such that 
\begin{equation}\label{eq:y001}
y = (1-\alpha) u + \alpha v.
\end{equation}
Our goal is to show that $u\in F$.

Let $p = y-x$. Since $y\in \linspace \cone (C-x)+x$, we have $p\in \linspace \cone (C-x)$, and by Proposition~\ref{prop:lineality} \ref{item:linchar} we have $-p\in \linspace \cone (C-x)\subseteq \cone (C-x)$. There exists $t>0$ such that $z: = x-t p \in C$. Applying Proposition~\ref{prop:technical} to the points $y,z, u,v$ and $x$ (cf. Figs.~\ref{fig:technical01} and~\ref{fig:technical02}), 
\begin{figure}[ht]
    \centering
    \includegraphics[width=0.55\textwidth]{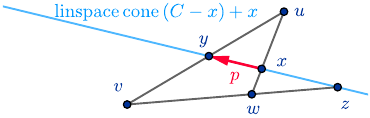}
    \caption{The two-dimensional gadget used in the proof of Proposition~\ref{prop:minface}}
    \label{fig:technical02}
\end{figure}
we conclude that there exists some $w\in (v,z)$ such that $x\in (u,w)$, which yields $[u,w]\subseteq F_{\min}(x,C)$, hence, $u-x, -(u-x)\in \cone (C-x)$, so  $u-x\in \linspace \cone(C-x) $. We conclude that $u\in \linspace \cone (C-x)+x$. Hence, from \eqref{eq:453453} we conclude that $u\in F$.

\end{proof}

It follows from Proposition~\ref{prop:minface} that the minimal face $F_{\min}(x,C)$ of $x\in C$ consists precisely of the line segments in $C$ that contain $x$ in their interiors (cf. Proposition~2.3 item 7 of \cite{FacelessProblem}).

\begin{corollary}\label{cor:minfaceunion}\label{linesunion} The minimal face $F_{\min}(x,C)$ for $x\in C$ (where $C$ is a convex subset of a real vector space $X$) can be represented as
\begin{equation}\label{minface:union}
F_{\min}(x,C) = \bigcup \{ [y,z]\subseteq C, \, x\in (y,z)\}.
\end{equation}
\end{corollary}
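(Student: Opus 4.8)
The plan is to derive both inclusions in \eqref{minface:union} directly from Proposition~\ref{prop:minface}, which already identifies $F_{\min}(x,C)$ with $C\cap(\linspace\cone(C-x)+x)$; the corollary is essentially a geometric restatement of that identity.

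For the inclusion $\supseteq$, I would not even need Proposition~\ref{prop:minface}, just the definition of a face. If $[y,z]\subseteq C$ and $x\in(y,z)$, then since $F_{\min}(x,C)$ is a face of $C$ containing $x$, the defining property of a face forces $y,z\in F_{\min}(x,C)$, and convexity of $F_{\min}(x,C)$ gives $[y,z]\subseteq F_{\min}(x,C)$. Taking the union over all such segments yields the containment. The degenerate case $y=z=x$ is automatic, as $[x,x]=\{x\}\subseteq F_{\min}(x,C)$.

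For the inclusion $\subseteq$, let $w\in F_{\min}(x,C)$. By Proposition~\ref{prop:minface}, $w=x+u$ for some $u\in\linspace\cone(C-x)$, and then by Proposition~\ref{prop:lineality}\ref{item:linchar} also $-u\in\linspace\cone(C-x)\subseteq\cone(C-x)$, so there is $t>0$ with $z:=x-tu\in C$. Setting $y:=w=x+u\in C$, convexity gives $[y,z]\subseteq C$, while the identity $x=\tfrac{t}{1+t}(x+u)+\tfrac{1}{1+t}(x-tu)$ shows $x\in(y,z)$. Hence $w=y$ lies on a segment of exactly the type appearing in the union on the right-hand side of \eqref{minface:union}. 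When $u=0$ this reads $w=x\in[x,x]\subseteq C$, which is precisely where the convention for degenerate line segments introduced earlier is needed.

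I do not expect any genuine obstacle: the argument is short and each step is immediate from the quoted results. The only point requiring a little care is the uniform treatment of the singleton segment $\{x\}$ through the degenerate-interval convention, which is exactly what lets the equality hold verbatim (rather than, say, "up to adjoining the point $x$"). I would flag this explicitly in the write-up so the reader sees why the formula is literally correct.
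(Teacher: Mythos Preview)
Your proof is correct and follows essentially the same route as the paper's: the inclusion $\supseteq$ is the face property applied to $F_{\min}(x,C)$, and the inclusion $\subseteq$ uses Proposition~\ref{prop:minface} to write a point of the minimal face as $x+u$ with $u\in\linspace\cone(C-x)$, then pulls back along $-u$ into $C$ to exhibit the required segment through $x$. The only cosmetic difference is that you invoke Proposition~\ref{prop:lineality}\ref{item:linchar} where the paper simply uses that $\linspace\cone(C-x)$ is a linear subspace (so $-u$ is in it automatically), and you make the degenerate-segment case explicit whereas the paper leaves it implicit.
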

\begin{proof} It is evident that the set on the right-hand side is a subset of any face that contains $x$, hence, it must be contained in the minimal face of $x$ in $C$ as well. It remains to show that the minimal face of $x$ in $C$ is a subset of the right-hand side. Let $y\in F_{\min}(x,C)$. Then by Proposition~\ref{prop:minface} we have 
\[
y \in \linspace \cone (C-x)+x,
\]
equivalently $y-x\in \linspace \cone (C-x)$. This means that $x-y\in \linspace \cone (C-x)\subseteq \cone (C-x)$, and so there exists some $t>0$ such that $t(x-y)\in C-x$, equivalently $x+ t(x-y)\in C$. It remains to note that
\[
x = \frac{t}{1+t} y + \frac{1}{1+t} (x+ t(x-y)), 
\]
hence $x\in (y, x+ t(x-y))$, and so $y$ belongs to the right-hand side of \eqref{minface:union}.
\end{proof}



\subsection{Chains of faces}

Recall that a \emph{chain} is a totally ordered set. A chain $\mathcal{F}$ of faces of a convex set $C$ is such a set of faces of $C$ that for every $F, E\in \mathcal{F}$ we have either $E\subsetneq F$ or $F\subsetneq E$ (Lemma~\ref{lem:faceofaface} implies that in this case either $E\lhd F$ or $F\lhd E$).

In the finite-dimensional setting a chain of faces of a convex set is always finite, since the affine hulls generated by strictly nested faces must have strictly increasing dimensions, and there is a natural bound coming from the dimension of the ambient space (the nesting of affine hulls also works in the infinite dimensional case, see Proposition~\ref{prop:genhulls} below). The length of any chain of faces in $\R^n$ is at most $n+2$, since a chain of faces may include the empty set as well as an extreme point, which is zero-dimensional (see Fig.~\ref{fig:tetrahedron}).
\begin{figure}[ht]
    \centering
    \includegraphics[width=0.9\textwidth]{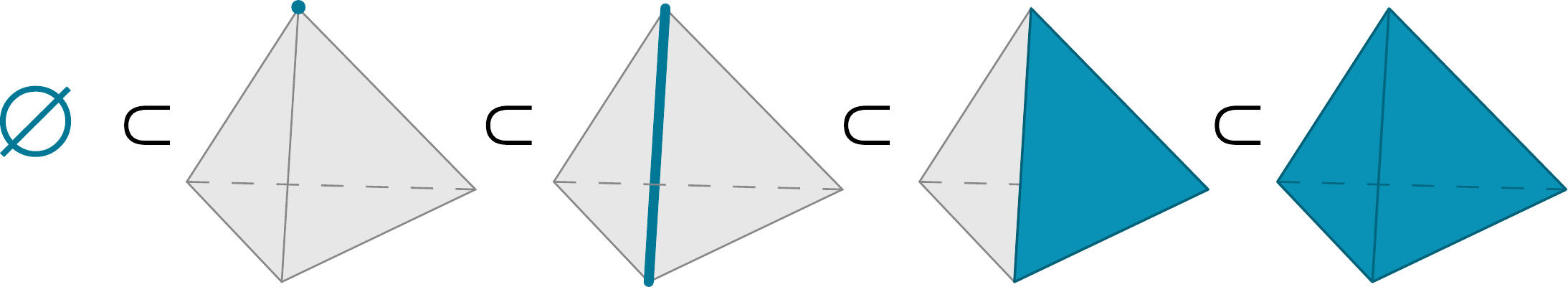}
    \caption{A maximal chain of faces of a tetrahedron.}
    \label{fig:tetrahedron}
\end{figure}
Polytopes have chains of faces of maximal possible length $d+2$, where $d$ is the intrinsic dimension of the polytope. In the infinite dimensional setting however there are convex sets that may have not only infinite, but uncountable chains of faces. An example of a set with uncountable chains of faces is the  Hilbert cube discussed next.

\begin{example}[Hilbert cube] \label{eg:Hilbert}Consider the subset $C$ of $l_2$ defined as the infinite product of diminishing line segments, 
$$
C:= \prod_{n\in \N^*}\left[0,\frac{1}{n}\right]
$$
(here $\N^* = \{1,2,3,\dots\}$ is the set of natural numbers that doesn't contain 0). It is not difficult to observe that the subsets of $C$ defined by
$$
\prod_{n\in \N^*}s_n, \quad s_n \in \left\{\{0\}, \left\{\frac{1}{n}\right\}, \left[0,\frac{1}{n}\right]\right\}
$$
are faces of $C$. We can now construct an uncountable chain of faces using a well-known trick. There exists an uncountable chain $\mathcal C$ in the power set  $\mathcal{P}(\N^*)$ of natural numbers (to see this consider any bijection between $\N^*$ and $\Q$, and then construct the $\phi:\R\to \mathcal{P}(\Q)$ that maps a real number $t$ to the set of all rational numbers that are strictly smaller than $t$: this mapping generates the required chain).

For any $c\in \mathcal{P}(\N^*)$ we let 
$$
F_{c}:= \prod_{n\in \N^*} s_n, \quad s_n = \begin{cases}
\left[0, \frac{1}{n}\right], & n \in c,\\
[0], & n \notin c.
\end{cases} 
$$

It is evident that if $c_1\subset c_2$ for some $c_1,c_2\in \mathcal C$, then $F_{c_1}\subset F_{c_2}$, hence, we have constructed an uncountable chain of faces $\{F_c\}_{c\in \mathcal C}$.

\end{example}

Notice that in Example~\ref{eg:Hilbert} we didn't need to make the `edge length' of the hypercube diminishing in $n$, and could have considered a hypercube in the general sequence space, say with a unit edge length. However having an example in a well-behaved Hilbert space like $l_2$ demonstrates that the phenomenon occurs naturally in a fairly standard setting.

 Another instance of a set with uncountable chains of faces is discussed in Section~\ref{sec:algcl}, its construction relies on an infinite Hamel basis (see Example~\ref{eg:ubiquitous}). 

\begin{proposition}\label{prop:unionchain} Let $C$ be a convex set in a real vector space $X$ and suppose that $\mathcal{F}$ is a chain of its faces. Then 
\[
E = \bigcup_{F\in \mathcal{F}} F
\]
is a face of $C$.
\end{proposition}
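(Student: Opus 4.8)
The plan is to verify directly the two defining properties of a face for the set $E = \bigcup_{F \in \mathcal{F}} F$: that $E$ is convex, and that $E$ absorbs the endpoints of any segment of $C$ whose relative interior it meets. Note that if $\mathcal{F}$ is empty then $E = \emptyset$, which is a face of any convex set, so we may assume $\mathcal{F} \neq \emptyset$.

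For convexity, take $x, y \in E$. By definition there are $F_1, F_2 \in \mathcal{F}$ with $x \in F_1$ and $y \in F_2$. Since $\mathcal{F}$ is a chain, one of $F_1, F_2$ is contained in the other; say $F_1 \subseteq F_2$. Then $x, y \in F_2$, and since $F_2$ is convex, $[x,y] \subseteq F_2 \subseteq E$. This is the only step where the chain (total order) hypothesis is used.

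For the face property, let $w \in E$ and let $y, z \in C$ with $w \in (y,z)$. There is some $F \in \mathcal{F}$ with $w \in F$. Since $F$ is a face of $C$, the defining condition applied to the point $w \in F$ and the segment $(y,z)$ with endpoints in $C$ gives $y, z \in F \subseteq E$. Hence $E$ is a face of $C$.

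The argument is entirely routine, so there is no real obstacle; the conceptual point worth highlighting is that the face condition is, in effect, ``local'' — it constrains the set through one of its points at a time — so it passes to unions of faces with no compatibility needed, whereas convexity of the union does require the family to be directed, which a chain certainly is.
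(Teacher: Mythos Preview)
Your proof is correct and follows essentially the same argument as the paper's: convexity via the chain condition to place any two points in a common member of $\mathcal{F}$, and the face property by passing through a single $F \in \mathcal{F}$ containing the given point. The only addition is your explicit treatment of the empty-chain case and the closing commentary, neither of which changes the substance.
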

\begin{proof} First observe that $E$ is a convex subset of $C$: every point $x\in E$ belongs to some face of $C$, and hence belongs to $C$; furthermore, for any $u,v\in E$ there exist $F_u, F_v\in \mathcal{F}$ such that $u\in F_u$ and $v\in F_v$. Since $\mathcal{F}$ is a chain, without loss of generality we can assume that $F_u\subset F_v$, and hence $u,v\in F_v$. Therefore, $[u,v]\subseteq F_v \subseteq C$, and so $C$ is convex. 

Let $x\in E$, and let $y,z\in C$ be such that $x\in (y,z)$. Since $E$ is a union of faces of $F$, there must be a face $F_x\in \mathcal{F}$ such that $x\in F_x$. Since $F_x$ is a face of $C$, we have 
\[
[y,z] \subseteq F_x \subseteq E,
\]
and so $E$ is a face of $C$.
\end{proof}

Every chain of faces generates an associated set of affine subspaces, which are the affine hulls of these faces. Recall that for any (convex) subset $C$ of a real vector space $X$ we define the \emph{affine hull} of $C$ as 
\[
\aff C = \left\{\sum_{i\in I} \alpha_i x_i,\, |\, \sum_{i\in I} \alpha_i = 1, x_i \in C\, \forall i \in I, |I|<\infty\right\}.
\]
Equivalently $\aff C $ is the smallest affine subspace containing $C$ (a set of the form $x+ L$, where $L$ is a linear subspace, and $x\in C$), or $\aff C = x+ \lspan (C-x)$.

The next proposition shows that chains of faces generate chains of affine subspaces, which results in a finite bound on chains of faces in finite dimensions, as was discussed in the beginning of this section.  

\begin{proposition}\label{prop:genhulls} If $E,F \unlhd C$ and $E\subsetneq F$, then $\aff E \subsetneq \aff F$.
\end{proposition}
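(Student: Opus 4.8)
The plan is to observe that $\aff E \subseteq \aff F$ is immediate from $E \subseteq F$ and monotonicity of the affine hull, and then to prove strictness by contradiction. If $E = \emptyset$ then $\aff E = \emptyset$, while $F \ne \emptyset$ forces $\aff F \ne \emptyset$, so we may assume henceforth that $E \ne \emptyset$, fix a point $x \in E$, and suppose for contradiction that $\aff E = \aff F$. I will derive from this that $F \subseteq E$, which is absurd since $E \subsetneq F$.

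Let $f \in F$ be arbitrary. Since $f \in \aff F = \aff E = x + \lspan(E - x)$, we may write $f - x = \sum_{i \in I}\lambda_i(e_i - x)$ for a finite index set $I$, points $e_i \in E$, and scalars $\lambda_i \in \R$. Splitting $I$ according to the sign of $\lambda_i$ and putting
\[
a = \sum_{i:\,\lambda_i > 0}\lambda_i(e_i - x), \qquad b = \sum_{i:\,\lambda_i < 0}(-\lambda_i)(e_i - x),
\]
we obtain $f - x = a - b$. For all sufficiently small $t > 0$ both $x + ta$ and $x + tb$ are convex combinations of $x$ and the relevant $e_i$, hence lie in $E$; fixing such a $t$ and using $\tfrac{t}{1+t} < t$, the point $w := x + \tfrac{t}{1+t}\,a$ also lies in $E$.

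The crux is the identity $w = \tfrac{t}{1+t}\,f + \tfrac{1}{1+t}(x + tb)$, which one checks directly by substituting $f = x + a - b$. Thus $w$ lies on the segment joining $f$ and $x + tb$, with both coefficients strictly inside $(0,1)$. If $f = x + tb$, then $f \in E$ immediately; otherwise $w$ lies in the open segment $(f,\, x+tb)$, whose endpoints belong to $F \subseteq C$, and since $w \in E$ and $E$ is a face of $C$, the definition of a face yields $f \in E$. In either case $f \in E$; as $f$ was an arbitrary element of $F$, we conclude $F \subseteq E$, the required contradiction.

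The only genuinely delicate point I anticipate is the sign-splitting of the affine combination together with the introduction of the auxiliary endpoint $x + tb$: this is exactly what is needed to realise a point of $E$ in the interior of a line segment having $f$ as an endpoint, so that the face property of $E$ can be invoked. It is morally the same two-dimensional gadget as in Proposition~\ref{prop:technical} (and in the proof of Proposition~\ref{prop:minface}), and once it is in place the remaining manipulations are routine.
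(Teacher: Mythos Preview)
Your proof is correct and follows essentially the same strategy as the paper's: assume $\aff E=\aff F$, write a point of $F$ as an affine combination of points of $E$, then rescale to produce a point of $E$ lying strictly inside a segment with endpoints in $C$, and invoke the face property of $E$ to derive the contradiction. The only cosmetic difference is that the paper fixes $x\in F\setminus E$ and uses the uniform average $y=\tfrac{1}{|I|}\sum u_i$ together with $z=y+t(y-x)$, whereas you anchor at $x\in E$, split the affine representation by sign into $a$ and $b$, and use the identity $w=\tfrac{t}{1+t}f+\tfrac{1}{1+t}(x+tb)$; both are realisations of the same two-dimensional gadget you mention.
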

\begin{proof} It is evident from the definition of the affine hull that if $E\subseteq F$, then $\aff E \subseteq \aff F$. It is hence sufficient to show that if $E\subsetneq F$ then there exists some $u\in \aff F \setminus \aff E$. Assume the contrary, then in particular $F\subseteq \aff E$. Since $E\subsetneq F$, there exists $x\in F\setminus E$, which can be represented as
\[
x = \sum_{i\in I} \alpha_i u_i, \qquad \sum_{i\in I} \alpha_i = 1, \quad u_i\in E\quad \forall i \in I, \; |I|<\infty. 
\]
By convexity
\[
y = \sum_{i\in I} \frac{1}{|I|} u_i \in E.
\]
Now there must exist a $t>0$ such that 
\[
\frac 1 {|I|} \geq t \left(\alpha_i -\frac{1}{|I|}\right) \quad \forall i \in I.
\]
Let 
\[
\lambda_i = \frac 1 {|I|} - t \left(\alpha_i -\frac{1}{|I|}\right), \quad i \in I.
\]
Since $\lambda_i>0$ for all $i\in I$ and 
\[
\sum_{i\in I} \lambda_i = 1,
\]
we have  
\[
z = y + t(y-x) = \sum_{i\in I} \lambda_i u_i \in E\subset C,
\]
and since $y\in E\subseteq C$ and $y\in (x,z)\subset C$, by the definition of a face we must have $x\in E$, which contradicts our assumption. 
\end{proof}

\section{Intrinsic core}\label{sec:intrinsic}

In finite-dimensional spaces the relative interior of a convex set $C$ is usually defined as its interior with respect to the affine hull of $C$. Intrinsic core 
is a generalisation of this notion to the real vector spaces (although note that for topological vector spaces the relative interior is alternatively generalised as the interior relative to the topological closure of the affine hull \cite{BorweinGoebel}). 

\subsection{Definition and basic properties of the intrinsic core}

There are several equivalent definitions of the intrinsic core: we discuss all of them throughout this section, however we start with the one that is perhaps the most elegant (also see the original work of Klee \cite{kleepart1} where this approach first appears).

\begin{definition}[Intrinsic core via line segments]\label{def:segments} Let $C$ be a convex subset of a real vector space $X$. Then the intrinsic core of $C$ is
\[
\icr C = \{x\in C\, |\, \forall y \in C \, \exists z\in C \, \text{ such that } x\in (y,z)\}.
\]
\end{definition}
In other words, $x\in \icr C$ if and only if one can extend the line segment connecting $x$ to any point of the set $C$ \emph{beyond} the point $x$, while staying within $C$. This is also the definition used in \cite{DYE1992983} (called the set of weak internal points), in \cite{FacelessProblem} (the set of inner points) and also in \cite{BorweinGoebel} (the set of relatively absorbing points or pseudo-relative interior).

Note that is a difference in the interpretation of the definition of the intrinsic core between our exposition and other literature: for instance, the discussion in \cite[Section~1]{FacelessProblem}) implies that singletons must have empty intrinsic cores, for the lack of any line segments, which we find inconsistent. It follows from Proposition~\ref{prop:charicore} that whenever $C$ is a singleton $C = \{x\}$, we have $F_{\min}(x,C) = \{x\} = C$, and hence $\icr C = C$. 

The definition of the intrinsic core can also be extended to general subsets of $X$, as it is done in \cite{InnerStructure}: in this case the instinsic core of some $S\subseteq X$ is defined as all points $x\in S$ such that for any line $L$ through $x$ either there are some $u\neq v$ such that $x\in (u,v)\subseteq L\cap S$ or $L\cap S = \{x\}$. 

\begin{proposition}\label{prop:icrisconvex} The intrinsic core of a convex subset $C$ of a vector space $X$ is a convex set.
\end{proposition}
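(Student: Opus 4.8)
The plan is to work directly from Definition~\ref{def:segments} and to reduce convexity of $\icr C$ to a single two-point statement: if $x_1,x_2\in\icr C$ and $\lambda\in(0,1)$, then $x:=\lambda x_1+(1-\lambda)x_2\in\icr C$. If $x_1=x_2$ this is immediate, so I assume $x_1\neq x_2$, in which case $x\in(x_1,x_2)$ in the genuine (non-degenerate) sense. Once the two-point statement is established, the endpoints $x_1,x_2$ already lie in $\icr C$, so the whole segment $[x_1,x_2]$ does, which is exactly convexity.

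To verify the two-point statement I fix an arbitrary $y\in C$ and must exhibit $z\in C$ with $x\in(y,z)$. First I invoke $x_1\in\icr C$ applied to the point $y$: there is $z_1\in C$ with $x_1\in(y,z_1)$. Now I have the configuration $x_1\in(y,z_1)$ together with $x\in(x_1,x_2)$, which is precisely the hypothesis of Proposition~\ref{prop:technical} with $a=x_1$, $u=y$, $v=z_1$, $b=x_2$, $c=x$. That proposition hands me a point $w\in(z_1,x_2)$ with $x\in(y,w)$. Since $z_1\in C$ and $x_2\in C$ and $C$ is convex, $w\in[z_1,x_2]\subseteq C$, so $z:=w$ does the job. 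As $y\in C$ was arbitrary, $x\in\icr C$, completing the argument.

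There is essentially no obstacle here beyond recognising that Proposition~\ref{prop:technical} performs exactly the geometric ``two triangles'' manipulation that a hands-on proof would otherwise require, namely expanding $x$ as a positive convex combination of $y$, $z_1$ and $x_2$ and collapsing the last two summands onto a single point of $C$ lying on the ray from $y$ through $x$. I would also remark in passing that the argument only used $x_2\in C$, not $x_2\in\icr C$, so it in fact establishes the stronger line segment principle that $x_1\in\icr C$ and $x_2\in C$ imply $[x_1,x_2)\subseteq\icr C$; for the present statement, however, the weaker version is all that is needed.
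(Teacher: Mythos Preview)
Your proof is correct. The route differs from the paper's in a small but pleasant way. The paper uses \emph{both} memberships $x_1,x_2\in\icr C$: given $u\in C$ it extends $(u,x_1)$ past $x_1$ to some $v\in C$ and $(u,x_2)$ past $x_2$ to some $w\in C$, and then works out by explicit convex algebra a point $q\in[v,w]\subseteq C$ with $x\in(u,q)$. You instead use only $x_1\in\icr C$ to obtain $z_1$, and then invoke Proposition~\ref{prop:technical} to do the convex-combination bookkeeping for you; $x_2$ is needed only as a point of $C$. As you note, this actually proves the line-segment principle $x_1\in\icr C,\ x_2\in C\Rightarrow [x_1,x_2)\subseteq\icr C$, which the paper establishes separately as Proposition~\ref{prop:basicsegment}---and the paper's proof of \emph{that} proposition is exactly your argument. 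So you have effectively merged the paper's Propositions~\ref{prop:icrisconvex} and~\ref{prop:basicsegment} into a single application of Proposition~\ref{prop:technical}, at the cost of nothing.
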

\begin{proof} Let $x,y\in \icr C$ and suppose that $z\in (x,y)$. Then for any $u\in C$ there exist $v,w\in C$ such that $x\in (u,v)$ and $y\in (u,w)$. Then we have for some $s,t,r\in (0,1)$
\[
x = u + s (v-u), \quad y = u + t (w-u), \quad z = x + r (y-x).
\]
Substituting the expressions for $x$ and $y$ into the one for $z$, we obtain
\[
z = u + (s (1-r) + t r)(q-u),
\]
where 
\[
q = \frac{s(1-r)}{s (1-r) + t r} v + \frac{t}{s (1-r) + t r}w \in C
\]
by convexity (see Fig.~\ref{fig:technincal03}).
\begin{figure}[ht]
    \centering
    \includegraphics[width=0.4\textwidth]{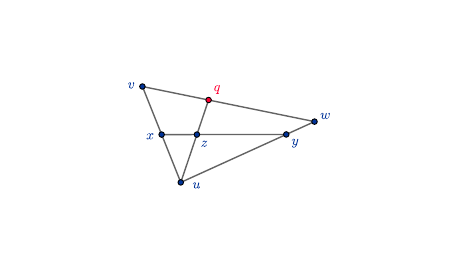}
    \caption{An illustration to the proof of Proposition~\ref{prop:icrisconvex}.}
    \label{fig:technincal03}
\end{figure}
Hence, $z\in (u,q)$ with $q\in C$, and by the arbitrariness of the choice of $u$ we have $z\in \icr C$. 
\end{proof}

In contrast to the finite-dimensional setting, where the relative interior of any nonempty convex set is also nonempty (e.g. see \cite[Theorem~6.2]{RockConvAn}), the intrinsic core of a convex set may be empty. The next example illustrates this phenomenon (cf. \cite[Theorem~5.2]{InnerStructure}). 

\begin{example}\label{eq:emptyicr} Let $C$ be a subset of $c_{00}$ (eventually zero sequences), 
\begin{equation}
C = \{x\in c_{00}\, |\, x_i \in [0,1]\}.\label{eq:8707987}
\end{equation}
First observe that $C$ is a convex set. We will show that for any $x\in C$ there exists $y\in C$ such that there isn't any $z\in C$ with $x\in (y,z)$, and hence $\icr C = \emptyset$.

Indeed, let $x = (x_1,\dots, x_k,\dots )\in C$. There is an  index $i\in \N$ such that $x_k = 0$ for every $k\geq i$. Let $y = (y_1,\dots, y_k, \dots)$ be such that 
\[
y_k = \begin{cases}
x_k, & k\in \N\setminus \{i\},\\
1, & k =i.
\end{cases}
\]
In other words, $y$ coincides with $x$ up to and including $i-1$-st entry, has 1 in the $i$-th position and zeros beyond $i$. If there was a $z\in C$ such that $x\in (y,z)$, then for some $t\in (0,1)$ $x = t y + (1-t) z$, and in particular $0 = x_i =  t y_i + (1-t) z = (1-t) + t z_i$, hence $z_i<0$, which is impossible by the definition of $C$. 
\end{example}

Convex sets that have nonempty intrinsic cores are called \emph{relatively solid} in \cite{cones}. Perhaps a better term would be to call such sets \emph{intrinsically solid}. It is tempting to think about sets with empty intrinsic cores as `small', however this intuition is misleading, as such sets can be very large, with their algebraic closure covering the entire space: we discuss this well-known phenomenon in more detail in Section~\ref{sec:algcl}.

Since an intrinsic core of some convex set may be empty, having $A\subset B$ for two convex sets $A$ and $B$ it doesn't yield $\icr A \subseteq \icr B$. Even though this behaviour already happens in finite-dimensional spaces, the general case is in a sense more extreme, since it may happen that the intrinsic core of $A$ is nonempty, and the intrinsic core of $B$ is empty. A trivial example is to consider some set $B$ such that $B\neq \emptyset$ and $\icr B = \emptyset$, and let $A=\{x\}$ for some $x\in B$.

We next prove a well-known technical result that effectively means that the intersection of a line passing through the intrinsic core of a convex set with this convex set lies in the intrinsic core, except possibly the endpoints of this intersection. We later prove a similar result involving the algebraic closure (Proposition~\ref{prop:calcintcore} \ref{calc:bdsegment}).

\begin{proposition}\label{prop:basicsegment} Let $C$ be a convex subset of a real vector space $X$. If $x\in \icr C$ and $y\in C$, then there exists $z\in \icr C$ such that $x\in (z,y)\subseteq \icr C$. In particular, $[x,y)\subset \icr C$. 
\end{proposition}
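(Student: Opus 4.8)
The plan is to establish the claim in two movements: first produce the extension point $z$ so that $x\in(z,y)$, and then argue that the whole half-open segment $[x,y)$ — indeed the closed segment $[z,y)$ with the new endpoint removed — lands in $\icr C$. For the first part I would simply invoke Definition~\ref{def:segments}: since $x\in\icr C$ and $y\in C$, there is $z\in C$ with $x\in(z,y)$. The real content is showing $z$ can be chosen inside $\icr C$ and that $(z,y)\subseteq\icr C$, and then noting $[x,y)\subset(z,y)\cup\{x\}\subseteq\icr C$ since $x\in\icr C$ already.

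For the second movement, the key step is to show that any point $p\in(z,y)$ lies in $\icr C$. Fix an arbitrary $u\in C$; I must produce $q\in C$ with $p\in(u,q)$. The natural tool is Proposition~\ref{prop:technical} (the two-dimensional gadget). Since $x\in\icr C$, there is $v\in C$ with $x\in(u,v)$. Now $p$ lies on the segment from $x$ towards $y$ (or from $z$ towards $y$), so $p\in(x,\cdot)$ or I can set things up so that $p$ lies in a segment emanating from $x$; applying Proposition~\ref{prop:technical} with the configuration $a=x$, $b=y$ (so $a\in(u,v)$ and $p\in(a,b)=(x,y)$) yields $w\in(v,y)$ with $p\in(u,w)$, and $w\in C$ by convexity. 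Hence $p\in\icr C$. This handles every interior point of $(z,y)$; combined with $x\in(z,y)$ this also re-confirms $x\in\icr C$ consistently.

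The one genuine subtlety is getting $z$ itself into $\icr C$ rather than merely into $C$ — Definition~\ref{def:segments} only guarantees $z\in C$. Here I would use that $x\in\icr C$ a second time: having found some $z_0\in C$ with $x\in(z_0,y)$, apply the definition once more to the pair $x$ and $z_0$ to get $z\in C$ with $x\in(z,z_0)$; then $z$ lies strictly beyond $x$ relative to the ray from $y$, so $x$ is in the open interior of $[z,y)$, and the argument of the previous paragraph (now with $p$ ranging over $(z,y)\supseteq(z_0,y)$) shows $(z,y)\subseteq\icr C$, in particular $z$ is an interior point of the segment $[z,z_0]$... — wait, $z$ is an \emph{endpoint} there, so instead I note $x\in(z,z_0)\subseteq(z,y)$ is not quite enough for $z$ itself. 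The clean fix: apply Definition~\ref{def:segments} to $x\in\icr C$ and the point $y$ to get $z_0$, then apply it to $x\in\icr C$ and $z_0$ to get $z$ with $x\in(z,z_0)$; now $z\in(z,y)$ fails, but $z$ together with the gadget argument applied at $z$... Actually the honest statement is that the proposition only claims $z\in\icr C$, and the way to get it is: $z\in(z',y)$ for the $z'$ obtained by extending past $z$, i.e.\ iterate the extension once more. So the logical skeleton is: extend past $x$ to reach $z$, extend past $z$ to reach $z'$, then the gadget shows $(z',y)\subseteq\icr C$ and in particular $z\in(z',y)\subseteq\icr C$ and $x\in(z,y)\subseteq\icr C$, giving $[x,y)\subset\icr C$.

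I expect the main obstacle to be purely bookkeeping: correctly ordering the three collinear points $y, x, z$ on the line and applying Proposition~\ref{prop:technical} with the right labels, since the gadget is stated asymmetrically (it extends the segment $(a,b)$ past $b$ into $(v,b)$, and one must match $a$ with $x$, which sits on $(u,v)$ for the appropriate auxiliary $v\in C$). No deep idea is needed beyond Definition~\ref{def:segments}, Proposition~\ref{prop:technical}, and convexity of $C$; the convexity of $\icr C$ (Proposition~\ref{prop:icrisconvex}) could give an alternative shortcut for the segment-containment step once the two endpoints are known to be in $\icr C$, and I would mention that as a remark.
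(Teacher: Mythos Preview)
Your approach is essentially the paper's: use Definition~\ref{def:segments} to extend beyond $x$, and Proposition~\ref{prop:technical} (the gadget) to show interior points of the segment lie in $\icr C$. The gadget argument you give for $p\in(x,y)$ is exactly the paper's first step, and it already yields the general fact $[x,y')\subseteq\icr C$ for every $y'\in C$.

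The gap is in your handling of $z\in\icr C$. Your ``logical skeleton'' says to ``extend past $z$ to reach $z'$'', but this is circular: the only extension tool available is Definition~\ref{def:segments} applied at $x$ (the only point known to lie in $\icr C$), and applying it to the pair $(x,z_0)$ returns a point on the \emph{opposite} side of $x$ from $z_0$, i.e.\ on the $y$-side, not beyond $z_0$. You noticed this yourself (``$z$ lies strictly beyond $x$ relative to the ray from $y$'' is false---it lies on the $y$-side), but the final skeleton still relies on a step you cannot justify: there is no way to push further out along the ray from $y$ through $x$ without already knowing some point out there is in $\icr C$.

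The clean fix---and this is what the paper does---is to reverse the order. First prove the general claim $[x,y')\subseteq\icr C$ for arbitrary $y'\in C$ (your gadget paragraph does exactly this). Only \emph{then} pick the extension: get $z_0\in C$ with $x\in(z_0,y)$, apply the general claim with $y'=z_0$ to obtain $[x,z_0)\subseteq\icr C$, and take any $z\in(x,z_0)$. Then $z\in\icr C$, $x\in(z,y)$, and $(z,y)=(z,x]\cup[x,y)\subseteq[x,z_0)\cup[x,y)\subseteq\icr C$. No iteration of the extension is needed; the shortcut you mention via Proposition~\ref{prop:icrisconvex} also works for the final containment once $z$ and a point of $(x,y)$ are both known to be in $\icr C$.
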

\begin{proof} 
Consider some $x\in \icr C$ and $y\in C$. Fix an arbitrary $q\in (x,y)$. For any $u\in C$ there exists $v\in C$ such that  $x\in (u,v)$ by the definition of the intrinsic core. Applying Proposition~\ref{prop:technical} to $x,y,u,v$ and $q$, we deduce that there exists $w\in (v,y)$ such that $q\in (u,w)$. Since $v,y\in C$, by convexity $w\in C$. Then $q\in \icr C$ by the definition of the intrinsic core. We have hence shown that $[x,y)\subseteq \icr C$. Now since $x\in \icr C$ and $y\in C$, there must be some $z\in C$ such that $x\in (z,y)$. Then by the earlier proven statement we must have $[x,z)\subset \icr C$. Hence $x\in (z,y)\subseteq \icr C$.
\end{proof}

\begin{proposition}\label{prop:basicicr} Let $C$ be a convex subset of a real vector space $X$. Then $\icr(\icr C)=\icr C$.
\end{proposition}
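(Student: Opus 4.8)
The plan is to prove the two inclusions $\icr(\icr C)\subseteq \icr C$ and $\icr C \subseteq \icr(\icr C)$ separately. The first inclusion is immediate from the definition: $\icr(\icr C)$ is by construction a subset of $\icr C$ (Definition~\ref{def:segments} applied to the convex set $\icr C$, which is convex by Proposition~\ref{prop:icrisconvex}, requires membership in $\icr C$). So the content is entirely in showing $\icr C \subseteq \icr (\icr C)$.

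For the nontrivial inclusion, I would take an arbitrary $x\in \icr C$ and an arbitrary $y\in \icr C$, and produce a $z\in \icr C$ with $x\in (y,z)$. This is exactly what Proposition~\ref{prop:basicsegment} delivers: applying it with $x\in \icr C$ and the point $y$ (which lies in $C$ since $\icr C\subseteq C$), we obtain $z\in \icr C$ with $x\in (z,y)\subseteq \icr C$. In particular $z\in \icr C$, so the extension point required by the definition of $\icr(\icr C)$ stays inside $\icr C$. Since $y\in \icr C$ was arbitrary, this shows $x\in \icr(\icr C)$.

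I do not expect any real obstacle here, since Proposition~\ref{prop:basicsegment} was stated and proved precisely as the technical engine for this kind of statement; the only thing to be careful about is that the definition of $\icr(\icr C)$ quantifies over $y\in \icr C$ rather than $y\in C$, which only makes the condition \emph{easier} to satisfy, so no extra work is needed. One should also note at the outset that $\icr C$ is convex (Proposition~\ref{prop:icrisconvex}), so that the expression $\icr(\icr C)$ is meaningful in the sense of Definition~\ref{def:segments}; and if $\icr C = \emptyset$ the statement holds trivially since then both sides are empty.
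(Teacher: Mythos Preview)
Your proposal is correct and follows essentially the same approach as the paper: both argue the trivial inclusion $\icr(\icr C)\subseteq \icr C$ first and then, for the converse, apply Proposition~\ref{prop:basicsegment} to $x\in \icr C$ and an arbitrary $y\in \icr C\subseteq C$ to obtain the required $z\in \icr C$. Your additional remarks about convexity of $\icr C$ and the empty case are harmless embellishments.
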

\begin{proof}
Observe that $\icr \icr C \subseteq \icr C$, so we only need to prove the converse. Now take any $x\in \icr C$. Then for any $y\in \icr C$ we also have $y\in C$. Since $x\in \icr C$ and $y\in C$, by Proposition~\ref{prop:basicsegment} there must be $z\in \icr C$ such that $x\in (z,y)\subseteq \icr C$. This shows that $x\in \icr \icr C$.
\end{proof}

Note that the statement of Proposition~\ref{prop:basicicr} is discussed in a more general setting of not necessarily convex sets in  Theorem 2.3 in \cite{InnerStructure}; even though the notion of intrinsic core can be generalised to arbitrary (non-convex) sets, the statement of Proposition~\ref{prop:basicicr} is not true for nonconvex sets (see \cite[Example~3.1]{RelintInnerPoints}).

In the next proposition we show that when a convex subset of a convex set has empty intrinsic core, the minimal face of this subset coincides with the minimal face of any point in the intrinsic core of the subset. This property is well-known in the finite dimensional case for the relative interior.

\begin{proposition}\label{prop:FminS} Let $S$ and $C$ be convex sets in a real vector space $X$. If $S\subseteq C$ and $\icr S \neq \emptyset$, then for any $x\in \icr S$ we have $F_{\min}(x, C) = F_{\min}(S,C)$. Moreover, $\icr S \subseteq \icr F_{\min}(S,C)$.
\end{proposition}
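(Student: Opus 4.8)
The plan is to prove the two claims in turn, leaning on Corollary~\ref{cor:minfaceunion} (the description of minimal faces as unions of line segments) and Proposition~\ref{prop:basicsegment} (the line-extension property of the intrinsic core). Fix $x\in\icr S$. For the first claim, since $x\in S\subseteq F_{\min}(S,C)$ and $F_{\min}(S,C)\unlhd C$, the minimality of $F_{\min}(x,C)$ immediately gives $F_{\min}(x,C)\subseteq F_{\min}(S,C)$. For the reverse inclusion it suffices, by the minimality of $F_{\min}(S,C)$, to show that $F_{\min}(x,C)$ contains $S$. So take an arbitrary $s\in S$. Because $x\in\icr S$ and $s\in S$, the defining property of the intrinsic core yields some $s'\in S\subseteq C$ with $x\in(s,s')$; now $[s,s']\subseteq C$ and $x\in(s,s')$, so by Corollary~\ref{cor:minfaceunion} we get $s\in[s,s']\subseteq F_{\min}(x,C)$. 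Hence $S\subseteq F_{\min}(x,C)$, and therefore $F_{\min}(S,C)\subseteq F_{\min}(x,C)$, establishing equality.

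For the second claim, write $F:=F_{\min}(S,C)=F_{\min}(x,C)$ and take any $x\in\icr S$; I want $x\in\icr F$. Let $y\in F$ be arbitrary; I must produce $z\in F$ with $x\in(y,z)$. Since $y\in F=F_{\min}(x,C)$, Corollary~\ref{cor:minfaceunion} gives points $y',z'\in C$ with $x\in(y',z')$ and $y\in[y',z']$ — in fact, chasing the proof of that corollary, one may take $z'$ of the form $x+t(x-y)$ for a suitable $t>0$, so that $x\in(y,z')$ directly with $z'=x+t(x-y)\in C$. It remains to check $z'\in F$: but $z'$ lies on the line through $x$ and $y$, both of which are in the face $F$, and $x\in(y,z')$; actually the cleanest route is to invoke Proposition~\ref{prop:basicsegment} applied inside $S$ — since $x\in\icr S$ there is $\tilde z\in\icr S\subseteq S\subseteq F$ with $x\in(\tilde z,\cdot)$ — no wait, $y$ need not be in $S$. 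Instead I will argue: $z'=x+t(x-y)$ satisfies $x\in(y,z')$, and since $y\in F$, $x\in F$, and $F$ is convex with $x$ an interior point of the segment $[y,z']$, the face property is not what we need here; rather, $z'-x = t(x-y)\in\cone(F-x)$ because $x-y\in\cone(F-x)$ (as $y\in F$) and $\cone(F-x)$ is a cone, so $z'\in F+ \text{(something)}$ — more precisely $z'\in \linspace\cone(C-x)+x$ since $y-x$ and $x-y$ both lie in $\cone(C-x)$, and then $z'\in C$ forces $z'\in C\cap(\linspace\cone(C-x)+x)=F_{\min}(x,C)=F$ by Proposition~\ref{prop:minface}. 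Thus $z'\in F$ and $x\in(y,z')$, so $x\in\icr F$.

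The main obstacle is the second claim, and specifically making sure the extending point $z'$ stays inside the face $F$ rather than merely inside $C$: the line-segment description from Corollary~\ref{cor:minfaceunion} hands us segments contained in $C$, and one must pin down that the relevant extension point lies in $F_{\min}(x,C)$ itself, which is exactly where Proposition~\ref{prop:minface}'s characterisation via $\linspace\cone(C-x)$ does the work — both $y-x$ and its negative lie in that lineality space once $y\in F_{\min}(x,C)$, and the same then holds for any scalar multiple, placing $z'=x+t(x-y)$ back in $F_{\min}(x,C)$. Once that is nailed down, everything else is a direct application of the cited results; I would present the argument in the order above, stating the $F_{\min}$ equality first and then deducing $\icr S\subseteq\icr F$ from it together with the line-extension property.
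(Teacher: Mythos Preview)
Your argument is correct. For the first claim you follow exactly the paper's route: use the definition of $\icr S$ to get, for each $s\in S$, a point $s'\in S$ with $x\in(s,s')$, then invoke Corollary~\ref{cor:minfaceunion} to conclude $S\subseteq F_{\min}(x,C)$, and combine with the trivial inclusion $F_{\min}(x,C)\subseteq F_{\min}(S,C)$.

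For the second claim you actually do more than the paper does. The paper simply asserts that $x\in\icr F_{\min}(x,C)$ and moves on; this is really the content of the later Proposition~\ref{prop:charicore}/Corollary~\ref{cor:minfaceicr}, so at this point in the text it is a forward reference. You instead supply a self-contained justification: from $y\in F_{\min}(x,C)=C\cap(\linspace\cone(C-x)+x)$ (Proposition~\ref{prop:minface}) you get $y-x\in\linspace\cone(C-x)$, hence $t(x-y)\in\linspace\cone(C-x)$ for the $t>0$ produced by the proof of Corollary~\ref{cor:minfaceunion}, and together with $z'=x+t(x-y)\in C$ this places $z'$ back in $F_{\min}(x,C)$. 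That is a clean and legitimate way to close the loop without appealing to results proved later. The exposition is rough (the detours through Proposition~\ref{prop:basicsegment} and the ``face property'' should be excised in a final version), but the argument you land on is sound and arguably more complete than the paper's own proof.
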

\begin{proof}
Under the assumptions of the proposition, suppose that $x\in \icr S$. Then by Proposition~\ref{prop:basicsegment} for any $y\in S$ there exists $z\in S $ such that $x\in (y,z)$. Since $S\subset C$, Corollary~\ref{cor:minfaceunion} then yields $S\subseteq F_{\min}(x,C)$. Hence the minimal face of $S$ must be a subset of $F_{\min}(x,C)$, however there can be no smaller face containing $x$ (and hence $S$), therefore, $F_{\min}(x,C) = F_{\min}(S,C)$.

Now for any $x\in \icr S$ we have $x\in \icr F_{\min}(x,C) = F_{\min}(S,C)$, hence, $\icr S \subseteq \icr F_{\min}(S,C)$.
\end{proof}

It doesn't look like anything similar to Proposition~\ref{prop:FminS} can be said about the minimal face $F_{\min}(S,C)$ when the subset $S$ of $C$ has empty intrinsic core. The minimal face of $S$ may or may not have empty intrinsic core in this case.

\subsection{Alternative definitions of the intrinsic core}

In \cite{BorweinGoebel} the  intrinsic core of a convex set $C\subseteq X$ (called the pseudo-relative interior, $\pri C$) is defined as follows.

\begin{definition}[Intrinsic core via the cone of feasible directions]\label{def:feasibled}
\[
\icr C=\{x\in C: \cone(C-x) \text{ is a linear subspace} \}.
\]
\end{definition}

We next show that this definition describes the same object as the one considered previously.

\begin{proposition}[{Lemma~2.3 in \cite{BorweinGoebel}}]\label{prop:lines} Let $C$ be a convex subset of a real vector space $X$. Then $\cone(C-x)$ is a linear subspace if and only if for every $u\in C$ there exists $v\in C$ such that $x\in (u,v)$.
\end{proposition}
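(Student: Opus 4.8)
The plan is to prove the two implications separately, using the characterisation of the lineality space from Proposition~\ref{prop:lineality}~\ref{item:linchar} and the elementary segment-splitting lemma Proposition~\ref{prop:technical}.

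First I would prove the easy direction: if for every $u\in C$ there exists $v\in C$ with $x\in (u,v)$, then $\cone(C-x)$ is a linear subspace. Set $K=\cone(C-x)$; this is a convex cone and $0\in K$, so by Proposition~\ref{prop:lineality}~\ref{item:nonemptylin} its lineality space is nonempty. To show $K=\linspace K$ it suffices, by Proposition~\ref{prop:lineality}~\ref{item:linchar}, to show that $w\in K$ implies $-w\in K$. Any $w\in K$ has the form $w=\sum_{i} \alpha_i (c_i-x)$ with $\alpha_i\ge 0$, $c_i\in C$; by convexity and positive homogeneity this can be rewritten as $w=\lambda(u-x)$ for a single $u\in C$ and some $\lambda\ge 0$ (the standard reduction $\sum\alpha_i(c_i-x)=(\sum\alpha_i)\big(\sum\frac{\alpha_i}{\sum\alpha_j}c_i-x\big)$, the inner point lying in $C$ by convexity; the case $\sum\alpha_i=0$ giving $w=0$ is trivial). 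By hypothesis there is $v\in C$ with $x\in (u,v)$, i.e. $x=tu+(1-t)v$ for some $t\in(0,1)$, which rearranges to $v-x=\frac{t}{1-t}(x-u)$, so $x-u=\frac{1-t}{t}(v-x)\in K$ and hence $-w=\lambda(x-u)\in K$ by homogeneity. Thus $K$ is closed under negation and is a linear subspace.

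For the converse, suppose $\cone(C-x)$ is a linear subspace and fix $u\in C$. Then $u-x\in \cone(C-x)$, so $-(u-x)=x-u$ also lies in $\cone(C-x)$; by the same one-point reduction as above there is $w\in C$ and $\mu\ge 0$ with $x-u=\mu(w-x)$. If $\mu>0$ then $x=\frac{\mu}{1+\mu}u+\frac{1}{1+\mu}w$ exhibits $x\in(u,w)$ with $w\in C$, as required. The only thing to rule out is $\mu=0$, i.e.\ $u=x$: in that case $x\in(u,v)$ holds for \emph{any} $v\in C$ since $(x,v)$ (indeed any segment with endpoint $x$ and the degenerate segment $(x,x)$) contains $x$ in the sense set up in the preliminaries — and one may simply take $v=x$, so a suitable $v$ exists trivially. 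Hence in all cases the desired $v$ exists.

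I expect the only mildly delicate point to be the bookkeeping in the one-point reduction of a conic combination $\sum_i\alpha_i(c_i-x)$ to a single scaled difference $\lambda(u-x)$, together with handling the degenerate subcases ($\sum\alpha_i=0$, or $u=x$); these are routine given convexity and the paper's convention that $(x,x)=\{x\}$, but they are where a careless argument could slip. Everything else is a direct translation between ``$x$ is in the open segment $(u,v)$'' and ``$x-u$ and $u-x$ both lie in $\cone(C-x)$'', which is exactly the content needed.
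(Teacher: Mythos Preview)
Your proof is correct and follows essentially the same route as the paper's: both reduce the question to showing that the convex cone $K=\cone(C-x)$ is closed under negation, using the single-point form $K=\{\lambda(u-x):\lambda\ge 0,\ u\in C\}$ valid for convex $C$, and then translate $-(u-x)\in K$ into the segment condition $x\in(u,v)$. The paper phrases this as $K=-K$ directly rather than via $K=\linspace K$, and leaves the degenerate case $u=x$ implicit, but the substance is identical. One small note: you announce Proposition~\ref{prop:technical} in your plan but never actually use it---it is not needed here.
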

\begin{proof} Since $C$ is a convex set, the set $\cone(C-x)$ is a linear subspace if and only if $\cone (C-x) = - \cone (C-x)$ (additivity holds naturally), equivalently for every $y = t (u-x)$, where $t>0$ and $u\in C$ there exists  $s>0$ and $v\in C$ such that $- y = s (v-x)$. This is in turn equivalent to having, for each $u\in C$, the existence of $v\in C$ and $r = s/t>0$ such that $u - x  = -r (v-x)$, which is equivalent to
\[
x =  \frac{1}{1+r} u + \frac{r}{1+r} v \in (u,v).
\]
\end{proof}

In a similar way to the intrinsic core we can also define the \emph{core} of a convex set (introduced by Klee \cite{kleepart1}, also see \cite{Holmes}). For some $x\in C$ we say that $x\in \core C$ if for every $y\in X\setminus \{x\}$ there exists $z\in (x,y)$ such that $[x,z]\subseteq C$. If $0\in \core C$, then $C$ is called \emph{absorbing}. Note that $\icr C = \core C$ if and only if either $\core C \neq \emptyset$ or $\icr C = \emptyset$.  We can  equivalently define the intrinsic core of a convex set $C$ as its core with respect to its affine hull. Notice that generally speaking the affine hull of a convex set is not necessarily a linear subspace of the ambient vector space $X$, since it may not contain zero. However as this space closed with respect to lines and segments, there is no impediment to defining the core of a convex set living in this affine subspace. 
\begin{definition}[Intrinsic core via the affine hull] \label{def:affine} Let $C$ be a convex set of a real vector space $X$. The intrinsic core of $C$ is the algebraic core of $C$ with respect to the affine hull of $C$, that is,
\[
\icr_X C = \core_{\aff C} C.
\]
\end{definition}

The next proposition ensures that this new definition aligns with the original Definition~\ref{def:segments}.

\begin{proposition}\label{prop:icrascore} Let $C$ be a convex subset of a real vector space $X$, and let $x\in C$. The following statements are equivalent:
\begin{enumerate}[label=(\roman*)]
    \item for every $y\in C$ there exists $z\in C$ such that $x\in (y,z)$;\label{technical:firstdef}
    \item for every $y\in \aff C$ there exists $z\in (x,y)$ such that $z\in C$. \label{technical:lastdef}
\end{enumerate}
\end{proposition}
\begin{proof} Let $x\in C$ and suppose \ref{technical:firstdef} holds. Take any $y\in \aff C$. Then 
\begin{equation}\label{eq:324324352354}
y = \sum_{i=1}^m \lambda_i u_i+ \lambda_0 x, \quad \sum_{i\in I} \lambda_i+\lambda_0 = 1, \quad u_i \in C\setminus \{x\}\;  \; \forall \, i\in \{1,\dots, m\}.
\end{equation}
If $m= 0$, then $y=x$ and \ref{technical:lastdef} holds with $z = x=y$. Otherwise for any $i\in \{1,\dots, m\}$  there exists $v_i\in C$ such that $x\in (u_i, v_i)$; explicitly, there is some $t_i\in (0,1)$ such that 
\begin{equation}\label{eq:324324352355}
x = t_i u_i + (1-t_i) v_i.
\end{equation}
Now let 
\begin{align*}
    \alpha_i := \begin{cases}
    \displaystyle\sum_{\substack{\lambda_i< 0,\\i\geq 1}}  \frac{\lambda_i}{t_i} +\lambda_0, & i = 0\\
    \lambda_i , & \lambda_i \geq 0 , i \geq 1,\\
      \frac{(t_i-1) \lambda_i}{t_i}, & \lambda_i < 0 , i \geq 1,\\
    \end{cases}
    \qquad
    w_i = \begin{cases}
    u_i, &  \lambda_i \geq 0,\\
    v_i, & \lambda_i <0.
    \end{cases}
\end{align*}
It is easy to see using \eqref{eq:324324352354} and \eqref{eq:324324352355} that the affine representation of $y$ can be rewritten in this new notation as 
\begin{equation}\label{eq:324324}
y = \sum_{i=1}^m \alpha_i w_i + \alpha_0 x.
\end{equation}
Here $\sum_{i=1}^m \alpha_i+ \alpha_0 = 1$, $\alpha_i\geq 0 $ and $w_i\in C$ for $i\in \{1,\dots, m\}$  Now if $\alpha_0\geq 0$, then $y\in C$, hence $[x,y]\subseteq C$ and for any $z\in (x,y)$ we have $[x,z]\subseteq C$.  Otherwise (if $\alpha_0<0$) let 
\[
z:= \frac{1}{1-\alpha_0} y+ \frac{-\alpha_0}{1-\alpha_0} x. 
\]
Observe that $z\in (x,y)$ and also substituting the expression for $y$ from \eqref{eq:324324} we have
\begin{align*}
z & 
= \sum_{i=1}^m \frac{\alpha_i}{1-\alpha_0} w_i.
\end{align*}
It is evident that $z\in C$, and therefore $[x,z]\subseteq C$. We have shown that \ref{technical:firstdef} yields \ref{technical:lastdef}. It remains to show the converse.

Assume now that \ref{technical:lastdef} holds, and let $y\in C$. Since $x,y\in C$, we have 
\[
x+ (x-y) \in \aff C. 
\]
By \ref{technical:lastdef} there exists $z\in (x, x+ (x-y))$ such that $z\in C$. Explicitly, there is some $t\in (0,1)$ such that
\[
z = (1-t) x+ t (x+ (x-y)) = (1+t) x - t y.
\]
Hence, 
\[
x = \frac{1}{1+t} z+ \frac{t}{1+t}y \in (z,y).
\]
This shows \ref{technical:firstdef}.
\end{proof}

\subsection{Intrinsic core and minimal faces}\label{ssec:intrinsic-minface}

Even though the intrinsic core of a convex set may be empty (as in  Example~\ref{eq:emptyicr}), it provides a disjoint decomposition of a convex set into the intrinsic cores of its faces. We prove this in Theorem~\ref{thm:UnionRiFaces} (cf. \cite[Corollary~2.4]{FacelessProblem}), but first we obtain a characterisation of the intrinsic core via minimal faces. Notice also that the next proposition was effectively proved in \cite[Proposition~2.3]{FacelessProblem}.

\begin{proposition}\label{prop:charicore} Let $C\subseteq X$ be a convex set, then 
\[
\icr C = \{x\in C, F_{\min}(x,C)=C\}.
\]
\end{proposition}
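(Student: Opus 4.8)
The plan is to prove the two inclusions separately, using Proposition~\ref{prop:minface} together with the line-segment characterisation of the intrinsic core from Definition~\ref{def:segments}. Recall that Proposition~\ref{prop:minface} gives $F_{\min}(x,C) = C\cap(\linspace\cone(C-x)+x)$. So the statement $F_{\min}(x,C)=C$ is equivalent to $C\subseteq \linspace\cone(C-x)+x$, i.e. $C-x\subseteq \linspace\cone(C-x)$. The claim is that this holds precisely when $x\in\icr C$.

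For the inclusion $\icr C\subseteq\{x\in C : F_{\min}(x,C)=C\}$: suppose $x\in\icr C$ and take any $y\in C$; I must show $y\in F_{\min}(x,C)$. By Definition~\ref{def:segments} there exists $z\in C$ with $x\in(y,z)$, so $[y,z]\subseteq C$ is a segment containing $x$ in its interior, and by Corollary~\ref{cor:minfaceunion} this segment lies in $F_{\min}(x,C)$; in particular $y\in F_{\min}(x,C)$. Since $y\in C$ was arbitrary, $C\subseteq F_{\min}(x,C)\subseteq C$, so equality holds.

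For the converse inclusion: suppose $x\in C$ with $F_{\min}(x,C)=C$; I must show $x\in\icr C$, i.e. for every $y\in C$ there is $z\in C$ with $x\in(y,z)$. Fix $y\in C$. Since $y\in C = F_{\min}(x,C)$, Proposition~\ref{prop:minface} gives $y-x\in\linspace\cone(C-x)$, and by Proposition~\ref{prop:lineality}~\ref{item:linchar} also $x-y\in\linspace\cone(C-x)\subseteq\cone(C-x)$. Hence there is $t>0$ with $t(x-y)\in C-x$, i.e. $z:=x+t(x-y)\in C$. A direct computation shows $x = \frac{t}{1+t}y + \frac{1}{1+t}z \in (y,z)$ with $z\in C$, which is exactly what is needed. (This last step is essentially the computation already carried out in the proof of Corollary~\ref{cor:minfaceunion}.)

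I do not anticipate a genuine obstacle here: both directions are short once Proposition~\ref{prop:minface} and Corollary~\ref{cor:minfaceunion} are in hand — indeed the result is almost a repackaging of those. The only point requiring a small amount of care is the degenerate case where $C=\{x\}$ is a singleton: then $\cone(C-x)=\{0\}$ is a linear subspace, $\linspace\cone(C-x)=\{0\}$, so $F_{\min}(x,C)=\{x\}=C$, and correspondingly $\icr C = C$ by the convention adopted after Definition~\ref{def:segments} (for every $y\in C$ we have $y=x$ and may take $z=x$ so that $x\in(x,x)=\{x\}$). So the equality holds in this edge case too, consistently with the remark following the definition.
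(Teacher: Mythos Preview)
Your proof is correct; both inclusions go through as written, and the singleton case is handled consistently with the paper's conventions.

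The paper's proof takes a slightly different route, though both hinge on Proposition~\ref{prop:minface}. For the forward direction, the paper works with Definition~\ref{def:feasibled} (the cone-of-feasible-directions characterisation): $x\in\icr C$ means $\cone(C-x)$ is itself a linear subspace, i.e.\ $\linspace\cone(C-x)=\cone(C-x)$, and then $F_{\min}(x,C)=C\cap(\cone(C-x)+x)=C$ in one line. You instead use Definition~\ref{def:segments} together with Corollary~\ref{cor:minfaceunion}, which is equally valid and perhaps more transparently geometric. For the converse, the paper argues by contradiction that $\linspace\cone(C-x)=\cone(C-x)$ (and hence $x\in\icr C$ via Definition~\ref{def:feasibled}), whereas you directly construct, for each $y\in C$, a point $z\in C$ with $x\in(y,z)$ --- essentially rerunning the computation from the proof of Corollary~\ref{cor:minfaceunion}, as you note. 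Your version is a touch more concrete and stays with the line-segment definition throughout; the paper's is a touch terser by passing to the cone characterisation. Neither buys a genuine advantage over the other.
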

\begin{proof}  From the definition of intrinsic core we then have $x\in \icr C$ if and only if $\cone (C-x)$ is a linear subspace, this is equivalent to $\linspace \cone(C-x) = \cone (C-x)$.

If $x\in \icr C$ we then have  by Proposition~\ref{prop:minface} 
\[
F_{\min}(x,C) = C\cap (\linspace \cone (C-x) +x) = C \cap (\cone (C-x)+x) = C.
\]
Conversely, if $F_{\min} (x,C) = C$, we have 
\begin{equation}\label{eq:9808}
C = C \cap (\linspace \cone (C-x)+x). 
\end{equation}
In this case, if $\linspace \cone (C-x) \neq \cone (C-x)$, there must be a point $u\in \cone (C-x)\setminus \linspace \cone (C-x)$, and hence there's some $t>0$ such that $x+tu\in C$, but $x+tu \notin \linspace \cone (C-x)+x$, which contradicts \eqref{eq:9808}. 
\end{proof}

Proposition~\ref{prop:charicore} that we have just proved allows us to state yet another equivalent definition of the intrinsic core.

\begin{definition}[Intrinsic core via minimal faces]\label{def:viaminface} Let $C$ be a convex subset of a real vector space $X$. The intrinsic core of $C$ can be defined as follows,
\[
\icr C = \{x\in C, F_{\min}(x,C)=C\}.
\]
\end{definition}

\begin{example}[Revisiting Example~\ref{eq:emptyicr}]
Using the last definition of the intrinsic core, we can provide an alternative explanation on why the intrinsic core of the set $C$ in Example~\ref{eq:emptyicr} defined by  \eqref{eq:8707987}  is empty. 

For $ x\in C$ let 
\[
F_{x} = \{u\in C\,|\, u_i = 0 \, \forall i: \,  x_i =0\}.
\]
It is not difficult to observe that $F_{x}$ is a face of $C$, does not coincide with $C$, and contains $x$. Hence the minimal face $F_{\min}(x,C)\subseteq F_{x}$ is strictly smaller than $C$ for every $x\in C$. We conclude that $\icr C = \emptyset$. 
\end{example}

\begin{corollary}\label{cor:minfaceicr} Let $C$ be a convex subset of a real vector space $X$, and let $F$ be a face of $C$. Then $F= F_{\min}(x,C)$ if and only if $x\in \icr F$. 
\end{corollary}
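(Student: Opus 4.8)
The plan is to deduce Corollary~\ref{cor:minfaceicr} directly from the two preceding results, Proposition~\ref{prop:charicore} and Lemma~\ref{lem:faceofaface}, treating $F$ as a convex set in its own right and applying the characterisation of the intrinsic core via minimal faces \emph{inside} $F$. Since $F\unlhd C$ and $F$ is itself a convex subset of $X$, the notion $F_{\min}(x,F)$ makes sense, and Lemma~\ref{lem:faceofaface} tells us that the faces of $F$ are exactly the faces of $C$ contained in $F$; this is the bridge between ``minimal face in $C$'' and ``minimal face in $F$''.

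For the forward direction, suppose $F = F_{\min}(x,C)$. In particular $x\in F$, and every face of $C$ containing $x$ contains $F$. I would show $F_{\min}(x,F) = F$: any face $E$ of $F$ with $x\in E$ is, by Lemma~\ref{lem:faceofaface}, a face of $C$ containing $x$, hence contains $F_{\min}(x,C) = F$, so $E = F$. Thus the minimal face of $x$ within $F$ is all of $F$, and Proposition~\ref{prop:charicore} applied to the convex set $F$ gives $x\in \icr F$.

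For the converse, suppose $x\in \icr F$. By Proposition~\ref{prop:charicore} applied to $F$, we have $F_{\min}(x,F) = F$. Now $F_{\min}(x,C)$ is a face of $C$ contained in $F$ (since $F$ is a face of $C$ containing $x$, the intersection defining $F_{\min}(x,C)$ is a subset of $F$), hence by Lemma~\ref{lem:faceofaface} it is a face of $F$ containing $x$; therefore it contains $F_{\min}(x,F) = F$, forcing $F_{\min}(x,C) = F$.

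I do not expect any real obstacle here: the only thing to be careful about is to keep straight the distinction between $F_{\min}(x,F)$ and $F_{\min}(x,C)$ and to invoke Lemma~\ref{lem:faceofaface} in the right direction each time. Alternatively, one could give a slightly more hands-on argument via Proposition~\ref{prop:FminS} with $S = \{x\}$ together with Proposition~\ref{prop:basicicr}, but the route through Proposition~\ref{prop:charicore} and Lemma~\ref{lem:faceofaface} is the cleanest and is entirely self-contained given the material already developed.
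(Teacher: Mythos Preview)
Your proof is correct and follows essentially the same route as the paper. The paper compresses your two directions into the single observation that for $x\in F\unlhd C$ one has $F_{\min}(x,F)=F_{\min}(x,C)$ (which is exactly what Lemma~\ref{lem:faceofaface} gives), and then reads off the equivalence from Proposition~\ref{prop:charicore}; you have simply unpacked that equality into its two inclusions.
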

\begin{proof} Observe that for $x\in F\unlhd C$ we have
\[
F_{\min}(x,F) = F_{\min}(x,C),
\]
hence, under the assumptions of this corollary, and using Proposition~\ref{prop:charicore}, we obtain 
\[
\icr F = \{x\in F\,|\, F_{\min}(x,F) = F\}  = \{x\in F\,|\, F_{\min}(x,C) = F\}  
\]
which shows the equivalence. 
\end{proof}

\begin{theorem}[cf. Corollary~2.4 in \cite{FacelessProblem}]\label{thm:UnionRiFaces} For a convex set $C\subset X$ and any $x\in C$ there is a unique face $F\unlhd C$ such that $x\in \icr F$. Consequently,
\begin{equation}\label{eq:disjoint}
C  = \dbigcup_{F\unlhd C} \icr F,
\end{equation}
where by $ \dbigcup$ we denote a disjoint union.
\end{theorem}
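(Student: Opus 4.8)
The plan is to derive both assertions directly from the machinery already developed, in particular Corollary~\ref{cor:minfaceicr} and Proposition~\ref{prop:charicore}. The key observation is that for any $x \in C$ the minimal face $F_{\min}(x,C)$ is a perfectly good candidate for the face $F$ whose intrinsic core contains $x$: by Corollary~\ref{cor:minfaceicr}, taking $F = F_{\min}(x,C)$ (which is indeed a face of $C$ by Lemma~\ref{lem:intersection}), we have $F = F_{\min}(x,C)$ trivially, and hence $x \in \icr F$. This establishes existence immediately.

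For uniqueness, suppose $x \in \icr F_1$ and $x \in \icr F_2$ for two faces $F_1, F_2 \unlhd C$. By Corollary~\ref{cor:minfaceicr} applied to each, $F_1 = F_{\min}(x,C)$ and $F_2 = F_{\min}(x,C)$, so $F_1 = F_2$. (Alternatively, one could argue from scratch: if $x\in \icr F_i$ then $F_{\min}(x,F_i)=F_i$ by Proposition~\ref{prop:charicore}, and since $F_i\unlhd C$, Lemma~\ref{lem:faceofaface} gives $F_{\min}(x,F_i)=F_{\min}(x,C)$, forcing $F_1=F_2$.) This shows the face is unique.

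Finally, for the decomposition \eqref{eq:disjoint}: every $x \in C$ lies in $\icr F$ for $F = F_{\min}(x,C) \unlhd C$, so $C \subseteq \bigcup_{F \unlhd C} \icr F$; the reverse inclusion is clear since each $\icr F \subseteq F \subseteq C$. The union is disjoint because if $x \in \icr F_1 \cap \icr F_2$, the uniqueness just proved gives $F_1 = F_2$; hence distinct faces contribute disjoint pieces, and $\dbigcup$ is justified.

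I do not anticipate any genuine obstacle here: the theorem is essentially a repackaging of Corollary~\ref{cor:minfaceicr}, and the only point requiring a modicum of care is making sure that $F_{\min}(x,C)$ is legitimately a face (handled by Lemma~\ref{lem:intersection}) and that the singleton/degenerate cases are not special — but Proposition~\ref{prop:charicore} already covers $C=\{x\}$, where $F_{\min}(x,C)=C$ and $x\in\icr C$, so nothing breaks.
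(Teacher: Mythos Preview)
Your proposal is correct and follows essentially the same approach as the paper. The only difference is cosmetic: you invoke Corollary~\ref{cor:minfaceicr} directly for both existence and uniqueness, whereas the paper's proof re-derives the content of that corollary inline (using Proposition~\ref{prop:charicore} together with $F_{\min}(x,F_{\min}(x,C))=F_{\min}(x,C)$ for existence, and the identity $F_{\min}(x,E)=F_{\min}(x,F)=F_{\min}(x,C)$ for disjointness); your version is arguably tidier precisely because Corollary~\ref{cor:minfaceicr} is already available.
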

\begin{proof} We obtain this decomposition by observing that for every $x\in C$ there exists a unique minimal face $F_{\min}(x,C)$. Evidently $F_{\min}(x,F_{\min}(x,C)) = F_{\min}(x,C)$, hence by Proposition~\ref{prop:charicore} we have $x\in \icr F$. This shows that $C$ is the union of intrinsic cores of its faces. It remains to show that the union is disjoint. 

Suppose that $x\in \icr E \cap \icr F$, where $E$ and $F$ are different faces of $C$. Then we must have $F_{\min}(x,E) = E \neq F = F_{\min} (x,F)$. However $x\in E\cap F$ and hence  
\[
F_{\min}(x,E) = F_{\min} (x,F) = F_{\min}(x,C),
\]
a contradiction.
\end{proof}

The next statement is Theorem~2.7 from \cite{FacelessProblem}, rephrased in our notation. The author calls the problem of characterising convex sets with no proper faces the `faceless problem', hence the name of the theorem. It appears that this result follows from the decomposition of Theorem~\ref{thm:UnionRiFaces}.

\begin{theorem}[Faceless theorem]
A nonempty convex subset $C$ of a real vector space $X$ is free of proper faces if and only if $C = \icr C$.
\end{theorem}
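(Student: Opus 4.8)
The plan is to deduce this statement directly from the disjoint decomposition in Theorem~\ref{thm:UnionRiFaces}, treating the two implications separately. The key observation is that $C$ being free of proper faces means the only faces of $C$ are $\emptyset$ and $C$ itself.

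First I would prove the easy direction: if $C = \icr C$, then $C$ has no proper faces. Suppose for contradiction that $F\lhd C$ is a proper face, so $F$ is nonempty and $F\neq C$. Pick any $x\in F$. Since $F$ is a face of $C$ strictly smaller than $C$ and containing $x$, we have $F_{\min}(x,C)\subseteq F\subsetneq C$, so $F_{\min}(x,C)\neq C$. By Proposition~\ref{prop:charicore} this means $x\notin \icr C$, contradicting $C = \icr C$ (which forces every point of $C$ to lie in $\icr C$).

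For the converse, suppose $C$ is nonempty and free of proper faces. By Theorem~\ref{thm:UnionRiFaces} we have the disjoint decomposition $C = \dbigcup_{F\unlhd C}\icr F$. Since the only faces of $C$ are $\emptyset$ and $C$, and $\icr\emptyset = \emptyset$, this union collapses to $C = \icr C$ (the disjointness is not even needed here, just the fact that the union ranges over the trivial collection of faces). Alternatively, and perhaps more transparently: for any $x\in C$, the minimal face $F_{\min}(x,C)$ is a face of $C$ containing $x$, hence nonempty, hence (by the faceless hypothesis) equal to $C$; Proposition~\ref{prop:charicore} then gives $x\in\icr C$. Combined with $\icr C\subseteq C$, this yields $C = \icr C$.

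I do not anticipate any real obstacle: the entire content has already been packaged into Proposition~\ref{prop:charicore} and Theorem~\ref{thm:UnionRiFaces}, and the argument is essentially a matter of unwinding what "free of proper faces" means. The only point requiring a modicum of care is the nonemptiness hypothesis, which is needed so that $F_{\min}(x,C)$ exists (one needs at least one point $x\in C$) and so that the statement $C = \icr C$ is not vacuously forced to fail; for $C = \emptyset$ both sides would be $\emptyset$ but the phrase "free of proper faces" is slightly degenerate, which is presumably why the hypothesis is stated.
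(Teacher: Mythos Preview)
Your proposal is correct and follows essentially the same approach as the paper: both directions hinge on Proposition~\ref{prop:charicore} and Theorem~\ref{thm:UnionRiFaces}. Your argument for the implication $C=\icr C \Rightarrow$ ``no proper faces'' is in fact slightly more direct than the paper's (you use $F_{\min}(x,C)\subseteq F\subsetneq C$ together with Proposition~\ref{prop:charicore}, whereas the paper invokes the uniqueness clause of Theorem~\ref{thm:UnionRiFaces} via a subface $E\unlhd F$), but this is a minor streamlining rather than a different route.
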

\begin{proof}
Suppose that a nonempty convex set $C$ has no proper faces. Then the only nonempty face of $C$ is the set $C$ itself, and by Theorem~\ref{thm:UnionRiFaces} we have $C= \icr C$.

Conversely, assume that $C$ is a nonempty convex set such that $C = \icr C$. Assume that on the contrary $C$ has a proper face $F$. Since $F$ is nonempty, there is some point $x\in F$, and by Theorem~\ref{thm:UnionRiFaces} we can find a face $E\lhd F$ such that $x\in \icr E$. By Lemma~\ref{lem:faceofaface} the convex set $E$ is also a face of $C$, while by our construction $E\subsetneq C$. We conclude that $x\in \icr C \cap \icr E$, which contradicts the uniqueness claim of Theorem~\ref{thm:UnionRiFaces}. 
\end{proof}

Since only the faces that are minimal with respect to some point in the convex set feature in the decomposition \eqref{eq:disjoint}, only minimal faces `contribute' to the set. It is natural to expect that the faces that aren't minimal can be represented in some natural way via the minimal ones, for instance as the union of a chain, as is the case in Example~\ref{eq:emptyicr}. However this is not true, as is clear from a construction suggested by one of our referees. We come back to this construction in Section~\ref{sec:linclosure}. 

\subsection{Calculus of intrinsic cores}

We next gather several key calculus rules pertaining to intrinsic cores that haven't been covered earlier. Even though this section contains mostly well-known results, we reprove them for completeness of our exposition.

\begin{proposition}[{Lemma~3.6 (a) in \cite{BorweinGoebel}}]\label{prop:sum} If $A$ and $B$ are convex sets in a real vector space $X$, then \begin{equation}\label{eq:sumicrweak}
\icr A + \icr B \subseteq \icr(A+B).
\end{equation}
Moreover, if $\icr A \neq \emptyset$ and $\icr B \neq\emptyset$, then 
\begin{equation}\label{eq:sumicrsrtrong}
\icr A + \icr B = \icr(A+B).
\end{equation}
\end{proposition}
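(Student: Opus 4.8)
The plan is to prove the two assertions separately, obtaining the inclusion \eqref{eq:sumicrweak} by a direct line-segment argument, and then the reverse inclusion in \eqref{eq:sumicrsrtrong} under the nonemptiness hypotheses. For \eqref{eq:sumicrweak}, I would take $x \in \icr A$ and $y \in \icr B$, set $p = x+y$, and verify the line-segment definition (Definition~\ref{def:segments}) for $p$ in $A+B$. Given an arbitrary point $a+b \in A+B$ with $a \in A$, $b \in B$, the definition of intrinsic core gives $a' \in A$ with $x \in (a,a')$ and $b' \in B$ with $y \in (b,b')$. Writing $x = (1-t)a + t a'$ and $y = (1-s)b + s b'$, the only subtlety is that $t$ and $s$ need not be equal, so one cannot simply add these two convex combinations. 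The fix is a reparametrisation: pick $r \in (0,1)$ with $r \le \min\{t,s\}$, write $x = (1-r)a + r\big(\tfrac{t-r}{1-r}a' + \tfrac{1-t}{1-r}a\big)$ — hmm, that is not quite it either. Cleaner: since $x \in (a,a')$, for every $r \in (0,t]$ we also have the point $x_r := x + (\tfrac{1}{t}-\tfrac{1}{r})\cdot 0$... I would instead argue that for any $r \in (0,1)$ small enough, $x \in (a, a'')$ where $a'' = a + \tfrac{1}{r}(x-a) \in [x,a'] \subseteq A$, i.e. $a''$ is the point such that $x$ divides $(a,a'')$ in ratio $r$; such $a''$ exists in $A$ for every $r \le t$ by convexity. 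Choosing the common ratio $r = \min\{t,s\}$, I get $a'' \in A$, $b'' \in B$ with $x = (1-r)a + r a''$ and $y = (1-r)b + r b''$, hence $p = (1-r)(a+b) + r(a''+b'')$, so $p \in (a+b,\, a''+b'')$ with $a''+b'' \in A+B$. This establishes $p \in \icr(A+B)$.

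For the reverse inclusion in \eqref{eq:sumicrsrtrong}, assume $\icr A \neq \emptyset$ and $\icr B \neq \emptyset$, fix $\bar x \in \icr A$ and $\bar y \in \icr B$, and take any $p \in \icr(A+B)$. Write $p = a_0 + b_0$ for some $a_0 \in A$, $b_0 \in B$ (not necessarily in the intrinsic cores). The target is to produce a decomposition $p = x + y$ with $x \in \icr A$, $y \in \icr B$. The idea is to perturb: the point $\bar x + \bar y$ lies in $\icr(A+B)$ by \eqref{eq:sumicrweak}, and $p \in \icr(A+B)$, so by Proposition~\ref{prop:basicsegment} applied in $A+B$ (with the roles: $p \in \icr(A+B)$, $\bar x + \bar y \in A+B$) there is $q \in \icr(A+B)$ with $p \in (q,\, \bar x+\bar y) \subseteq \icr(A+B)$. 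Thus $p = \lambda q + (1-\lambda)(\bar x + \bar y)$ for some $\lambda \in (0,1)$; writing $q = a_1 + b_1$ we get $p = (\lambda a_1 + (1-\lambda)\bar x) + (\lambda b_1 + (1-\lambda)\bar y)$. Now $\lambda a_1 + (1-\lambda)\bar x \in [a_1, \bar x)$ with $\bar x \in \icr A$, so by Proposition~\ref{prop:basicsegment} this point lies in $\icr A$; similarly $\lambda b_1 + (1-\lambda)\bar y \in \icr B$. Hence $p \in \icr A + \icr B$, completing the proof.

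The main obstacle I anticipate is the bookkeeping in the first part — matching the two unequal subdivision ratios $t$ and $s$ into a single ratio so the two convex combinations can be added coordinatewise in $A \times B$. The clean way to phrase it is: if $x \in (a,a')$ then for every $r \in (0,1]$ with $r$ sufficiently small there is $a'' \in A$ with $x = (1-r)a + r a''$ (namely $a'' = a + \tfrac1r(x-a)$, which lies on the segment $[x,a']$ hence in $A$ once $r \le t$), and then take $r$ to be the minimum of the two ratios. Everything else is a routine application of Definition~\ref{def:segments} and Proposition~\ref{prop:basicsegment}; in particular the second part is essentially just "slide both summands toward their respective intrinsic-core anchor points and invoke the half-open segment property."
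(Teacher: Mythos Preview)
Your approach matches the paper's in both parts: for \eqref{eq:sumicrweak} you match the two unequal line-segment ratios to a common one and add the resulting convex combinations; for the reverse inclusion in \eqref{eq:sumicrsrtrong} you extend the segment from the anchor $\bar x+\bar y$ through $p$ to some $q\in A+B$, decompose $q=a_1+b_1$, and apply Proposition~\ref{prop:basicsegment} to each summand --- this is exactly the paper's argument.

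There is one arithmetic slip in your ratio-matching. With $x=(1-t)a+ta'$ you compute $a'':=a+\tfrac{1}{r}(x-a)=a+\tfrac{t}{r}(a'-a)$, and this lies on $[a,a']\subseteq A$ precisely when $\tfrac{t}{r}\le 1$, i.e.\ when $r\ge t$, not $r\le t$. So the common ratio must be $r=\max\{t,s\}$ (still in $(0,1)$ since $t,s\in(0,1)$), not the minimum. With that correction the argument goes through verbatim; the paper avoids the issue by parametrising the extension as ``point beyond $=\,$ start $+\,\alpha\cdot$(direction)'' with $\alpha>1$ and then taking $\alpha=\min$, which is the same device in reciprocal form.
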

\begin{proof}
Note that if one of $\icr A$ or $\icr B$ is nonempty, then \eqref{eq:sumicrweak} holds trivially. To show that \eqref{eq:sumicrweak} holds in general, take $a\in \icr A$, $b\in \icr B$, and let $c = a+b$. Now pick any $z\in A+B$. We can represent $z$ as $z = x+y$, where $x\in A$ and $y\in B$. By the definition of intrinsic core there exist $u\in A$ and $v\in B$ such that $a\in (x,u)$ and $b\in (y,v)$. Algebraically this means that there exist $t,s>1$ such that
\[
u = x+ t(a-x), \quad v= y+ s(b-y).
\]
Let $\alpha: = \min\{s,t\}$. By convexity
\[
 x+ \alpha (a-x)\in (x,u)\subseteq A, \quad   y+ \alpha(b-y)\in (y,v)\subseteq B.
\]
Hence 
\[
w = (x+ \alpha (a-x))+(y+\alpha(b-y)) \in A+B.
\]
It is evident that $c\in (z,w)$, where $w\in A+B$. Since our choice of $z$ was arbitrary, we conclude that $c\in \icr (A+B)$.

Now assume that $\icr A,\icr B\neq \emptyset$. To show \eqref{eq:sumicrsrtrong}, in view of \eqref{eq:sumicrweak} it remains to demonstrate that 
\[
\icr(A+B)\subseteq \icr A + \icr B.
\]
Pick any point $c\in \icr (A+B)$. Since $\icr(A+B)\subseteq A+B$, there must be $a\in A$ and $b\in B$ such that $c = a+b$. Now choose any $x\in \icr A$ and $y\in \icr B$. Then $z = x+y\in A+B$. Now $z\in A+B$, $c\in \icr (A+B)$, hence there must be some $w\in A+B$ such that $c\in (z,w)\subseteq \icr (A+B)$.

Since $w\in A+B$, there must be $u\in A$, $v\in B$ such that $w=u+v$. Now by Proposition~\ref{prop:basicicr} we have $[x,u)\subseteq \icr A$ and $[y,v)\subseteq \icr B$. Now $c = (1-\alpha) z + \alpha w  $  for some $\alpha \in (0,1)$. Since $z = x+y$ and $w = u+v$, we conclude that  
\[
c = [(1-\alpha) x + \alpha u ] +  [(1-\alpha) y + \alpha v ]\in \icr A + \icr B.
\]
\end{proof}

\begin{corollary}\label{cor:addpoint} If $C$ is a convex subset of a real vector space $X$, then for any $x\in X$ 
\[
\icr (C+\{x\}) = \icr C + \{x\}.
\]
\end{corollary}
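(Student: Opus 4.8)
The plan is to deduce Corollary~\ref{cor:addpoint} directly from Proposition~\ref{prop:sum} by taking $A = C$ and $B = \{x\}$. The only thing that needs checking is that the hypotheses of the strong form \eqref{eq:sumicrsrtrong} are met, which reduces to the trivial observation that a singleton has nonempty intrinsic core. Indeed, by Definition~\ref{def:segments} (or Proposition~\ref{prop:charicore}, since $F_{\min}(x,\{x\}) = \{x\}$), we have $\icr \{x\} = \{x\} \neq \emptyset$; the degenerate-segment convention $(x,x) = \{x\}$ introduced earlier makes this immediate, since for the unique point $y = x$ of the set $\{x\}$ we may take $z = x$ and get $x \in (x,x) = \{x\}$.

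So the argument is: first note $\icr\{x\} = \{x\}$. Then, if $\icr C = \emptyset$, both sides of the claimed identity are empty (the left because $\icr C + \{x\} = \emptyset$, the right because $\icr(C+\{x\}) = \icr C + \{x\} = \emptyset$ would need to be argued — actually it is cleaner to note that $C + \{x\}$ is just a translate of $C$, and translation is a bijection preserving the line-segment structure, so $\icr(C+\{x\}) = \emptyset$ as well; alternatively invoke \eqref{eq:sumicrweak} for the inclusion $\icr C + \{x\} \subseteq \icr(C+\{x\})$ and handle the reverse separately). If $\icr C \neq \emptyset$, then both $A = C$ and $B = \{x\}$ have nonempty intrinsic cores, and \eqref{eq:sumicrsrtrong} gives
\[
\icr(C + \{x\}) = \icr C + \icr\{x\} = \icr C + \{x\}.
\]

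I do not anticipate any real obstacle: this is a one-line corollary of Proposition~\ref{prop:sum}. The only mildly delicate point is the empty-intrinsic-core case, and even there the fastest route is simply to observe that $v \mapsto v + x$ is an affine bijection of $X$ onto itself carrying $C$ onto $C + \{x\}$ and carrying line segments to line segments, so it carries $\icr C$ onto $\icr(C+\{x\})$ by Definition~\ref{def:segments}; this handles both cases at once and in fact makes the appeal to Proposition~\ref{prop:sum} unnecessary, though citing the proposition keeps the exposition uniform. I would present the proof via Proposition~\ref{prop:sum} for consistency with the surrounding calculus rules, adding the remark that $\icr\{x\} = \{x\}$ and that the empty case is trivial because translation preserves intrinsic cores.
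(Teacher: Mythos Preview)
Your proof is correct, but the paper takes a slightly slicker route that avoids the case split on whether $\icr C$ is empty. Instead of invoking the strong form \eqref{eq:sumicrsrtrong}, the paper applies the weak inclusion \eqref{eq:sumicrweak} \emph{twice}: once with $A=C$, $B=\{x\}$ to get $\icr C + \{x\}\subseteq \icr(C+\{x\})$, and once with $A=C+\{x\}$, $B=\{-x\}$ (using $C=(C+\{x\})-\{x\}$) to get the reverse inclusion $\icr(C+\{x\})\subseteq \icr C + \{x\}$. This symmetric trick works uniformly regardless of whether $\icr C$ is empty, so no separate treatment of that case is needed. Your alternative observation---that $v\mapsto v+x$ is a bijection carrying segments to segments and hence $\icr C$ onto $\icr(C+\{x\})$---is arguably the most direct argument of all and bypasses Proposition~\ref{prop:sum} entirely; it is a perfectly good proof, just not the one the paper chose for the sake of uniformity with the surrounding calculus.
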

\begin{proof}
From Proposition~\ref{prop:sum} we have 
\[
\icr (C+\{x\}) \subseteq  \icr C  + \icr \{x\} = \icr C + \{x\},
\]
also since $C = (C+\{x\})-\{x\}$, 
\[
\icr C \subseteq  \icr (C+\{x\})-\{x\},
\]
hence 
\[
\icr C + \{x\} \subseteq \icr (C+\{x\}),
\]
and we have the required equality. 
\end{proof}

Note that in \cite[Proposition~2.5]{DangEtAl} it is shown that if $\core C = C$, then for any convex $D\subseteq X$ we have 
\[
\core (C+D) = C + D.
\]
This property doesn't generalise to intrinsic cores, even for finite dimensions. For instance, take $C = (0,1)\times \{0\}$, $D = \{0\}\times [0,1]$ in the plane. Then $\icr C = C$, but  
\[
\icr (C+D) = (0,1)\times (0,1) \neq (0,1)\times [0,1] = C+D.
\]

\begin{proposition}[cf. Lemma~3.3 in \cite{BorweinGoebel}]\label{prop:linmap} Let $C$ be a convex subset of a real vector space $X$, and let $A:X\to Y$ be a linear mapping from $X$ to another real vector space $Y$. Then 
\begin{equation}\label{eq:9089}
A \icr C \subseteq \icr (AC). 
\end{equation}
Moreover, if $A$ maps $C$ to $AC$ injectively or $\icr C \neq\emptyset$, then 
\begin{equation}\label{eq:lineareq}
A \icr C = \icr (AC).
\end{equation}
\end{proposition}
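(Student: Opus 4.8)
The inclusion \eqref{eq:9089} should come straight from the segment definition (Definition~\ref{def:segments}). I would take $x\in\icr C$ and show $Ax\in\icr(AC)$: given any $y'\in AC$, write $y'=Ay$ for some $y\in C$; by the definition of the intrinsic core there is $z\in C$ with $x\in(y,z)$, and applying $A$ (which is affine on segments) gives $Ax\in(Ay,Az)=(y',Az)\subseteq AC$. Since $y'$ was arbitrary, $Ax\in\icr(AC)$.

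For the equality \eqref{eq:lineareq}, the nontrivial direction is $\icr(AC)\subseteq A\icr C$, and I would split into the two hypotheses. \emph{Case $\icr C\neq\emptyset$:} pick $c'\in\icr(AC)$ and some $x\in\icr C$; then $Ax\in AC$ by \eqref{eq:9089}, so by Proposition~\ref{prop:basicsegment} applied in $AC$ there is $w'\in AC$ with $Ax\in(w',c')\subseteq\icr(AC)$ — in particular $c'$ lies on the segment from $Ax$ past some point of $AC$. Write $w'=Aw$ with $w\in C$; by Proposition~\ref{prop:basicicr} (i.e. $[x,w)\subseteq\icr C$) the whole segment $[x,w)$ maps into $A\icr C$, and since $c'=(1-\alpha)Ax+\alpha Aw$ for some $\alpha\in(0,1)$ is the image of the point $(1-\alpha)x+\alpha w\in[x,w)\subseteq\icr C$, we get $c'\in A\icr C$. \emph{Case $A$ injective on $C$:} here for $c'\in\icr(AC)$ let $x\in C$ be the unique preimage, $Ax=c'$; I claim $x\in\icr C$. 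Given $y\in C$, the point $Ay\in AC$, so there is $z'\in AC$ with $c'\in(Ay,z')$; writing $z'=Az$ and using injectivity to transport the affine relation back, $x\in(y,z)$ with $z\in C$. Hence $x\in\icr C$ and $c'=Ax\in A\icr C$.

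The one genuine subtlety, and the step I expect to require the most care, is the \emph{case of $A$ injective on $C$ but $\icr C=\emptyset$}: there I cannot invoke Proposition~\ref{prop:basicsegment} or pick a reference point in $\icr C$, so the argument must pull the segment structure back through $A$ directly. This works because injectivity of $A$ on $C$ lets me uniquely recover $z$ from $z'$ and, crucially, recover the scalar $t\in(0,1)$ in $c'=tAy+(1-t)z'$ as the same scalar witnessing $x\in(y,z)$ — but one should be slightly careful that $z$ constructed this way genuinely lies in $C$ (it does, since $z'\in AC$ means $z'=Az$ for some $z\in C$, and injectivity pins down exactly which one). A minor bookkeeping point throughout is the degenerate-segment convention: if $C$ is a singleton the statements hold trivially with $\icr C=C$, and the segment arguments above remain valid because $(x,x)=\{x\}$.
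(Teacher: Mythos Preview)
Your treatment of the inclusion \eqref{eq:9089} and of the injective case matches the paper's. In the case $\icr C\neq\emptyset$, however, you have reversed the roles when invoking Proposition~\ref{prop:basicsegment}: applying it with $Ax$ as the intrinsic-core point and $c'$ as the ``other'' point yields $Ax\in(w',c')$, which does \emph{not} give $c'=(1-\alpha)Ax+\alpha Aw$ with $\alpha\in(0,1)$; it places $c'$ outside the segment $[Ax,Aw]$. The fix is immediate: use $c'\in\icr(AC)$ directly (or apply Proposition~\ref{prop:basicsegment} with $c'$ as the intrinsic-core point and $Ax\in AC$ as the other point) to obtain $c'\in(Ax,w')$ for some $w'=Aw\in AC$; then $c'=A\bigl((1-\alpha)x+\alpha w\bigr)$ with $(1-\alpha)x+\alpha w\in[x,w)\subseteq\icr C$ by Proposition~\ref{prop:basicsegment} (your citation of Proposition~\ref{prop:basicicr} here is a slip), and you are done.

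Once corrected, your argument for the $\icr C\neq\emptyset$ case is genuinely different from the paper's and considerably more direct. The paper does not argue on segments here: it lets $L=\ker A$, chooses a linear complement $M$ via a Hamel basis, projects $C$ onto $M$ to obtain $D$, uses Proposition~\ref{prop:sum} to show $\icr C+L=\icr D+L$ and hence $A(\icr C)=A(\icr D)$, and finally applies the already-proved injective case to $D$ (on which $A$ is injective). Your route avoids the complement construction and the dependence on Proposition~\ref{prop:sum} entirely, needing only a single reference point in $\icr C$, which is exactly the hypothesis; it is the more economical argument.
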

\begin{proof} Let $C$ be a subset of $X$, and suppose $A:X\to Y$ is a linear mapping. If $x\in \icr C$, let $u = A x$ and take any $v\in AC$. There must be a $y\in C$ such that $v = Ay$. Since $x\in \icr C$, by the definition of the intrinsic core there exists $z\in C$ such that $x\in (y,z)$. Hence $u\in (v, Az)$ and we conclude that $u\in \icr AC$. This proves  \eqref{eq:9089}.

We first prove that \eqref{eq:lineareq} holds for an injective linear mapping $A$: we only need to show the converse of \eqref{eq:9089}. So assume that $A$ maps $C\subseteq X$ to $AC\subseteq Y$ injectively. Consider any $x\in \icr AC$. There is a unique $u\in C$ such that $x = Au$. Take any $v\in C$, and let $y = Av$. Since $x\in \icr AC$, there must be $z\in AC$ such that $x\in (y,z)$, and hence there is some $w\in C$ such that $z = A w$. Now observe that $u\in (v,w)$. Indeed, we have $x = (1-t) y + t z$ for some $t\in (0,1)$, and hence $A u =  (1-t) A v  + A t w = A ((1-t) v + t w) $. We must have $u = (1-t) v + t w$, otherwise $A$ is not injective. We have therefore shown that $u\in (v,w)\subseteq C$, and by the arbitrariness of $v$ we conclude that $u\in \icr C$, hence $x = A u \in A \icr C$.

Now let's deal with the case when $A$ is not injective, but $\icr C \neq \emptyset$.  Let $L$ be the kernel of $A$, i.e. 
\[
L = \{x\in X\,|\, Ax  = 0 \}.
\]
Since $L$ is a linear subspace of $X$, it has a Hamel basis that can be completed to the entire space. Denote by $M$ the span of this complement, then for any $x\in X$ we have $x = x_L+ x_M$, where $x_L \in L$ and $x_M \in M$. 

Let $D$ be the projection of $C$ onto the complement $M$ of $L$, in other words,  
\[
D = \{x\in M\,|\, \exists y\in L: x+y \in C\}.
\]
It is evident that $D+L = C+L$, and therefore
\begin{equation}\label{eq:23402395243}
A C = A (C+L) = AD.
\end{equation}

Furthermore, observe that $D$ is the linear image of $C$ under the projection $P$ that takes an element of $X$ to the subset $M$ by trimming off all of the $L$-space coordinates. From \eqref{eq:9089} we have
\[
P \icr C \subseteq \icr (PC) = \icr D,
\]
and so $\icr D$ is nonempty. It is evident that  $\icr L = L \neq \emptyset$, hence we can apply Proposition~\ref{prop:sum}  to $\icr C$ and $L$ (and to $\icr D$ and $L$) to obtain 
\[
\icr C + L = \icr (C+L) = \icr (D+L) = \icr D + L, 
\]
we therefore have 
\begin{equation}\label{eq:24234234234}
A(\icr C)   = A(\icr C + L) = A(\icr D+L) = A (\icr D), 
\end{equation}
In view of \eqref{eq:23402395243} and \eqref{eq:24234234234} it remains to show that 
\[
\icr (AD) = A (\icr D).
\]
Since $A$ is an injective mapping on $D$,  the result is true by the previously proved claim.
\end{proof}

The following corollary of Proposition~\ref{prop:linmap} was proved in \cite{DangEtAl} for the algebraic core under the assumption of surjectivity of $A$ (see Lemma~4.1 in \cite{DangEtAl}). 

\begin{corollary}[{cf. \cite[Lemma~4.1]{DangEtAl}}] If $A:X\to Y$ is a surjective linear operator between two vector spaces $X$ and $Y$, and $C\subseteq X$, then 
\begin{equation}\label{eq:32423424}
A(\core C) \subseteq \core (AC),
\end{equation}
moreover if $\core C \neq \emptyset$, then \eqref{eq:32423424} holds as equality.
\end{corollary}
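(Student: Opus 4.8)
The plan is to derive this corollary directly from Proposition~\ref{prop:linmap} by recalling the relationship between $\core$ and $\icr$ noted just after Proposition~\ref{prop:lines}: for any convex set $C$ we have $\icr C = \core C$ whenever $\core C \neq \emptyset$, and moreover $\core C = \icr C$ precisely when $\core C$ is nonempty or $\icr C$ is empty. So the first step is to reduce the claim about $\core$ to the already-proved claim about $\icr$, using that $A$ is surjective to transfer the nonemptiness of cores.

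First I would prove the inclusion \eqref{eq:32423424}. If $\core C = \emptyset$ there is nothing to prove, so assume $x \in \core C$. Then $\core C \neq \emptyset$, hence $\core C = \icr C$, so $x \in \icr C$. By \eqref{eq:9089} of Proposition~\ref{prop:linmap}, $Ax \in \icr(AC)$. Since $\icr(AC) \subseteq \core(AC)$ always holds, we get $Ax \in \core(AC)$, which establishes \eqref{eq:32423424}.

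Next I would handle the equality case, assuming $\core C \neq \emptyset$. Then, as above, $\core C = \icr C \neq \emptyset$, so the equality \eqref{eq:lineareq} of Proposition~\ref{prop:linmap} applies, giving $A(\icr C) = \icr(AC)$, i.e. $A(\core C) = \icr(AC)$. It remains to upgrade $\icr(AC)$ to $\core(AC)$. Here is where surjectivity of $A$ enters in an essential way: since $\icr C = \core C \neq \emptyset$, pick $x_0 \in \core C$; then $Ax_0 \in \icr(AC)$, so $\icr(AC) \neq \emptyset$, and therefore $\icr(AC) = \core(AC)$ by the characterisation recalled above. Combining, $A(\core C) = \icr(AC) = \core(AC)$, as required. (Surjectivity of $A$ is what makes $\core(AC)$ the natural ambient-space notion for $AC \subseteq Y$; without it one would only get a statement about $\core$ relative to $\aff(AC)$, matching the weaker hypothesis in \cite[Lemma~4.1]{DangEtAl}.)

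The only mild subtlety — and the step I would be most careful about — is justifying the passage between $\core$ and $\icr$ at each stage: one must invoke "$\core C = \icr C$ iff $\core C \neq \emptyset$ or $\icr C = \emptyset$" in the correct direction each time, both for $C$ in $X$ and for $AC$ in $Y$. There is no real obstacle here, just bookkeeping; the whole corollary is essentially a two-line consequence of Proposition~\ref{prop:linmap} once the $\core$/$\icr$ dictionary is in place.
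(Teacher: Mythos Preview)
There is a genuine gap in the equality step. You announce that surjectivity of $A$ enters in an essential way, but the argument you actually write never uses it: the chain ``$x_0 \in \core C \Rightarrow Ax_0 \in \icr(AC) \Rightarrow \icr(AC) \neq \emptyset$'' holds for any linear $A$. The final inference---``$\icr(AC)\neq\emptyset$, therefore $\icr(AC)=\core(AC)$ by the characterisation recalled above''---misapplies your own dictionary: the equivalence says $\icr=\core$ iff $\core\neq\emptyset$ \emph{or} $\icr=\emptyset$, so from $\icr(AC)\neq\emptyset$ alone nothing follows; you would need $\core(AC)\neq\emptyset$, which is precisely the unproved point. (Take $A:\R^2\to\R^2$ the projection onto the first axis and $C$ the open unit disk: then $\icr(AC)\neq\emptyset$ but $\core(AC)=\emptyset$.) There is also a minor slip earlier: ``$\icr(AC) \subseteq \core(AC)$ always holds'' is backwards, since in general $\core\subseteq\icr$, not the reverse.

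What is missing is an argument that actually exploits surjectivity to show $\core(AC)\neq\emptyset$. The paper does this directly: for $u\in\core C$ and $x=Au$, given any $y\in Y$ one uses surjectivity to write $y=Av$ for some $v\in X$; then $u\in\core C$ yields $w\in(u,v)$ with $[u,w]\subseteq C$, and applying $A$ gives $Aw\in(x,y)$ with $[x,Aw]\subseteq AC$, so $x\in\core(AC)$. Your parenthetical remark suggests you had the right alternative route in mind---$\core C\neq\emptyset$ forces $\aff C=X$, whence surjectivity gives $\aff(AC)=A(X)=Y$ and then $\icr(AC)=\core_{\aff(AC)}(AC)=\core(AC)$ by Definition~\ref{def:affine}---but this argument is not in your text; supplying it would close the gap.
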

\begin{proof}
If $\core C = \emptyset$, then \eqref{eq:32423424} is a triviality. Assume that $\core C \neq \emptyset$. Then $\icr C = \core C$, and by Proposition~\ref{prop:linmap} we have 
\[
A(\core C)  = A(\icr C) = \icr (AC).
\]
It remains to show that $\icr (AC) = \core (AC)$, and for that it is sufficiently to show that $\core (AC)\neq \emptyset$. Take any $x\in \icr (AC)$ and $y\in Y$. Since $A$ is surjective, there must be some $u\in C$ and $v\in X$ such that $x = A u$ and $y = A v$. Since $\core A \neq \emptyset$, there must be some $w\in (u,v)$ suth that $[u,w]\subseteq C$. Hence $[z , x]\subseteq AC$, with $z = Aw \in (Au, Av) = (x,y)$, so $x\in \core (AC) \neq \emptyset$.

\end{proof}

\begin{corollary}[{Lemma 3.6 (c) in \cite{BorweinGoebel}}]\label{cor:lambda} If $C$ is a convex subset of a real vector space $X$, then  
\[
\icr \lambda C = \lambda \icr C \quad \forall \lambda \in \R\setminus \{0\}.
\]
\end{corollary}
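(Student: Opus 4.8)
The plan is to deduce this directly from the linear-image calculus rule, Proposition~\ref{prop:linmap}. Fix $\lambda\in\R\setminus\{0\}$ and consider the linear map $A\colon X\to X$ given by $Ax=\lambda x$. First I would observe that, since $\lambda\neq 0$, this map is a linear bijection of $X$ onto itself; in particular it is injective, so it certainly maps $C$ to $AC$ injectively, and the hypothesis of the equality case \eqref{eq:lineareq} of Proposition~\ref{prop:linmap} is satisfied for \emph{any} convex set $C$ (no relative solidity is needed). Applying \eqref{eq:lineareq} and noting that $AC=\lambda C$ and $A\icr C=\lambda\icr C$ by the very definition of a scalar multiple of a set, we obtain $\lambda\icr C=\icr(\lambda C)$, which is the assertion.

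Alternatively, one can argue straight from Definition~\ref{def:segments} without invoking Proposition~\ref{prop:linmap}: a point $x$ lies in $\icr(\lambda C)$ if and only if for every $y\in\lambda C$ there is $z\in\lambda C$ with $x\in(y,z)$. Writing $x=\lambda x'$, $y=\lambda y'$, $z=\lambda z'$ with $x',y',z'\in C$, and using that multiplication by the nonzero scalar $\lambda$ preserves the affine parametrisation of a line segment (so $x\in(y,z)$ is equivalent to $x'\in(y',z')$), this condition is equivalent to: for every $y'\in C$ there is $z'\in C$ with $x'\in(y',z')$, i.e. $x'\in\icr C$, i.e. $x\in\lambda\icr C$.

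There is essentially no obstacle here: the statement is a routine specialisation of the calculus already developed, and the only thing one must be slightly careful about is to record that $A$ is a bijection precisely because $\lambda\neq 0$ (which is exactly why the hypothesis $\lambda\in\R\setminus\{0\}$ appears, and why the two one-sided inclusions combine into an equality for arbitrary convex $C$).
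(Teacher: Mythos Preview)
Your proposal is correct and matches the paper's proof exactly: the paper simply notes that multiplication by $\lambda\neq 0$ is an injective linear map and invokes Proposition~\ref{prop:linmap}. Your additional direct argument from Definition~\ref{def:segments} is also fine but is not needed.
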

\begin{proof}
Follows from $\lambda $ defining an injective linear mapping and Proposition~\ref{prop:linmap}.
\end{proof}

\begin{corollary} Let $C$ be a convex subset of a real vector space $X$ and let $D$ be a convex subset of a real vector space $Y$, then 
\[
\icr (C\times D)=\icr C\times\icr D.
\]
\end{corollary}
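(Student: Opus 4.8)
The plan is to prove the two inclusions $\icr(C\times D)\subseteq \icr C\times\icr D$ and $\icr C\times\icr D\subseteq\icr(C\times D)$ separately, working directly from Definition~\ref{def:segments} in both cases; the projections $\pi_X:(x,y)\mapsto x$ and $\pi_Y:(x,y)\mapsto y$ are linear and surjective onto $C$ and $D$ respectively, so one direction can also be obtained cheaply from the calculus already developed.

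For the inclusion $\icr(C\times D)\subseteq\icr C\times\icr D$, I would take $(x,y)\in\icr(C\times D)$ and show $x\in\icr C$ (the argument for $y\in\icr D$ being symmetric). Fix any $u\in C$; then $(u,y)\in C\times D$, so by Definition~\ref{def:segments} there is $(u',y')\in C\times D$ with $(x,y)\in((u,y),(u',y'))$. Reading off the first coordinate gives $x\in(u,u')$ with $u'\in C$, and since $u$ was arbitrary, $x\in\icr C$. (Alternatively this follows from Proposition~\ref{prop:linmap}: $\pi_X\icr(C\times D)\subseteq\icr(\pi_X(C\times D))=\icr C$, and likewise for $\pi_Y$, hence $\icr(C\times D)\subseteq\icr C\times\icr D$.)

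For the reverse inclusion $\icr C\times\icr D\subseteq\icr(C\times D)$, I would take $x\in\icr C$ and $y\in\icr D$ and show $(x,y)\in\icr(C\times D)$. Pick any $(u,v)\in C\times D$. Since $x\in\icr C$ there is $u'\in C$ with $x\in(u,u')$, say $x=(1-t)u+tu'$ with $t\in(0,1)$; similarly there is $v'\in D$ with $y\in(v,v')$, say $y=(1-s)v+sv'$ with $s\in(0,1)$. These two parametrisations use different scalars, so the naive pair $(u',v')$ need not work; the fix is to shorten the longer segment. Let $r=\min\{t,s\}\in(0,1)$ and set $u''=u+\tfrac{r}{t}(u'-u)$ and $v''=v+\tfrac{r}{s}(v'-v)$; by convexity $u''\in[u,u']\subseteq C$ and $v''\in[v,v']\subseteq D$, and now $x=(1-r)u+ru''$ and $y=(1-r)v+rv''$ with a common ratio $r$, whence $(x,y)=(1-r)(u,v)+r(u'',v'')\in((u,v),(u'',v''))$ with $(u'',v'')\in C\times D$. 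Since $(u,v)$ was arbitrary, $(x,y)\in\icr(C\times D)$.

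The only genuine obstacle is this coordinate-synchronisation issue in the second inclusion — matching the extension parameters in the two factors — and it is resolved exactly as in the proof of Proposition~\ref{prop:sum}, by replacing the two extension points with convex combinations governed by the smaller of the two parameters. Everything else is a direct unwinding of the definition, so the proof is short. One small point worth noting explicitly: when either $\icr C$ or $\icr D$ is empty the claimed identity reads $\emptyset=\emptyset$ on the left after the first inclusion, and is trivially consistent; no separate case analysis is needed since both inclusions were proved without any nonemptiness hypothesis.
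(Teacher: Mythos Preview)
Your approach is more direct than the paper's and avoids a case split. The paper writes $C\times D=MC+ND$ via the injective embeddings $M(x)=(x,0_Y)$, $N(y)=(0_X,y)$, applies Proposition~\ref{prop:linmap} (injective case) to get $\icr(MC)=M(\icr C)$ and $\icr(ND)=N(\icr D)$, and then Proposition~\ref{prop:sum} to obtain $\icr(C\times D)=\icr C\times\icr D$ when both intrinsic cores are nonempty; it must then treat the empty case separately, by an argument that is essentially your first inclusion. Your direct route proves both inclusions uniformly from Definition~\ref{def:segments}, so no nonemptiness hypothesis and no case analysis are needed --- a genuine simplification.

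There is, however, an arithmetic slip in your second inclusion. With $x=(1-t)u+tu'$ and your $u''=u+\tfrac{r}{t}(u'-u)$ one computes
\[
(1-r)u+ru''=u+\tfrac{r^2}{t}(u'-u),
\]
which equals $x=u+t(u'-u)$ only when $r=t$; so your claimed identity $x=(1-r)u+ru''$ fails in general. The fix is to take $r=\max\{t,s\}$ (not $\min$) and set $u''=u+\tfrac{t}{r}(u'-u)$, $v''=v+\tfrac{s}{r}(v'-v)$: then $\tfrac{t}{r},\tfrac{s}{r}\in(0,1]$, so $u''\in[u,u']\subseteq C$ and $v''\in[v,v']\subseteq D$, while $(1-r)u+ru''=u+t(u'-u)=x$ and likewise $(1-r)v+rv''=y$, giving $(x,y)\in((u,v),(u'',v''))$. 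The reason your $\min$ does not match Proposition~\ref{prop:sum} is that there the parameters satisfy $t,s>1$ (they measure how far \emph{past} the core point the extension reaches), and shrinking means taking the minimum; in your parametrisation $t,s\in(0,1)$ are the convex-combination weights, and the inversion swaps $\min$ for $\max$. With this correction your argument is complete.
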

\begin{proof} First assume that $\icr C \neq \emptyset$ and $\icr D \neq \emptyset$, and define two linear mappings, $M:X\to X\times Y$ and $N:Y\to X\times Y$ as follows:
\[
M(x) = (x,0_Y), \quad N(y) = (0_X,y).
\]
These two mappings are injective, and therefore by Proposition~\ref{prop:linmap} we have
\[
M(\icr C) = \icr (MC),\quad N(\icr D) = \icr (ND).
\]
Now $C\times D = MC+ND$, therefore by Proposition~\ref{prop:sum} we have
\begin{align*}
\icr (C\times D) 
& = \icr (MC+ND) \\
& = \icr(MC)+\icr(ND) \\
& = M(\icr C)+N(\icr D)\\
& = \{(x,y)\, |\, x\in \icr C, y \in \icr D\}\\
& =\icr C \times \icr D.
\end{align*}

It remains to consider the case when one of the intrinsic cores $\icr C$ or $\icr D$ is empty and to show that the left-hand side $\icr (C\times D)$ must also be empty in this case. Without loss of generality assume that $\icr C = \emptyset$, but there is some $(x,y) \in \icr (C\times D)$. Pick any $a\in C$. The point $(a,y)$ must be in $C\times D$, hence, there exists $(b,c)\in C\times D$ and $t\in (0,1)$ such that
\[
(x,y) = t (a,y)+ (1-t) (b,c). 
\]
We conclude that $x = t a + (1-t) b$, where $b\in C$ and $t\in (0,1)$, and hence $x\in \icr C$, which contradicts the assumption.
\end{proof}

\section{Linear Closure}\label{sec:algcl}

In this section we discuss the notion of linear closure that can be defined using the natural topology of the real line. Our main goal is to relate this notion to the notion of the intrinsic core, and discuss some interesting phenomena pertaining to linear closure of convex sets. 

\subsection{Linear closure and the intrinsic core}\label{sec:linclosure}

We say that a convex subset $C$ of a real vector space $X$ is \emph{linearly closed} if for every $(x,y)\subseteq C$ we have $[x,y]\subseteq C$. The \emph{linear closure} $ \lcl C$ is the smallest convex linearly closed subset of $X$ that contains $C$. In a finite dimensional space the linear closure of a convex set $C$ consists of all line segments contained in $C$ together with their endpoints, but this is not the case in the infinite-dimensional setting (see \cite{KleeLin,nikodym} for a detailed discussion on the fascinating differences between these two notions; note also that it may happen that $\lin C \neq \lin \lin C$). We hence distinguish between the (strong) linear closure $\lcl C$ and the (weak) linear closure $\lin C$.


There is a related notion of \emph{linearly accessible points} discussed \cite{Holmes,cones}: a point $x\in X$ is linearly accessible from $C$ if and only if there is some $y\in X$ such that $(x,y)\subseteq C$. Since our notation allows $x=y$ in this case, this notion coincides with the weak linear closure $\lin C$ (cf. \cite{DangEtAl}). It appears that some references do not allow $x=y$ in the definition of linearly accessible points, and in this interpretation the set of linearly accessible points coincides with our definition $\lin C$ whenever $C$ contains at least 2 points (see \cite{Holmes}). 

We can then define the (weak) \emph{algebraic boundary} of a convex subset $C$ of a real vector space $X$ as the set of all endpoints of the line segments that are contained in $C$, that is,
\[
\lbd C = \{x\in X\,|\, \exists u  \in X: (x,x+u) \subseteq C, (x,x-u)\cap C = \emptyset\}.  
\]
Note that $\lbd C$ is called the set of \emph{outer points} in \cite{InnerStructure}, also this reference defines such points for general (not necessarily convex) subsets of $X$.

\begin{proposition}\label{prop:calcintcore}
Let $C_1$, $C_2$ and $C$ be convex subsets of a real vector space $X$. Then we have the following relations between the weak linear closure and the intrinsic core.
\begin{enumerate}[label=(\roman*)]
\item $\icr C = C \setminus \lbd C$. \label{calc:acldecomp}
\item $\lin C = C\cup \lbd C = \icr C \cup \lbd C.$ \label{calc:aclandabd}
\item If $x\in \icr C$ and $y\in \lin C$, then there exists $z\in \icr C$ such that $x\in (z,y) \subseteq \icr C$. In particular, $[x,y)\subset \icr C$. \label{calc:bdsegment}
\item If $\icr C\neq \emptyset$, then  $C\subseteq \lin \icr C$, $\icr(\lin C)  = \icr C$ and $\lin C = \lin \icr C$.\label{calc:aclicr}
\item We have $\aff C = \aff \lin C $, and if $\icr C \neq \emptyset$, then $\aff \icr C = \aff C$. \label{calc:afficr}
\end{enumerate}
\end{proposition}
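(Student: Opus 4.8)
The plan is to prove the five items more or less in the stated order, since the later ones lean on the earlier ones. For item~\ref{calc:acldecomp}, I would argue by double inclusion. If $x \in \icr C$, then for every $y \in C$ the segment $(y,x)$ extends beyond $x$ inside $C$; applied to both $y = x+u$ and $y = x-u$ for any direction $u$ with $(x,x+u)\cap C \neq \emptyset$, this shows $x \notin \lbd C$, so $\icr C \subseteq C \setminus \lbd C$. Conversely, if $x \in C \setminus \lbd C$, I would take an arbitrary $y \in C$, note $(x,y) \subseteq C$ (convexity), and observe that since $x \notin \lbd C$ the opposite ray cannot be entirely outside $C$, so there is $z \in C$ with $x \in (z,y)$; that is exactly $x \in \icr C$. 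A small amount of care is needed with degenerate segments (the case $y = x$), but our convention that $(x,x) = \{x\}$ handles this cleanly.

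Item~\ref{calc:aclandabd} then follows quickly: $\lin C$, by definition, is the set of endpoints of segments in $C$, which is $C$ together with the endpoints lying outside $C$; a point $x \in \lin C \setminus C$ has $(x,x+u)\subseteq C$ for some $u$ but $x \notin C$, so certainly $(x,x-u)\cap C = \emptyset$, giving $x \in \lbd C$. Combined with $\lbd C \subseteq \lin C$ (an endpoint of an in-$C$ segment) and $C \subseteq \lin C$, this yields $\lin C = C \cup \lbd C$; substituting $C = \icr C \,\dot\cup\, \lbd C$ from \ref{calc:acldecomp} (absorbing the overlap $\lbd C \cap C$) gives the second equality. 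Item~\ref{calc:bdsegment} is the linear-closure analogue of Proposition~\ref{prop:basicsegment}, and I would prove it the same way: fix $q \in (x,y)$, and given $u \in C$ pick $v \in C$ with $x \in (u,v)$; since $y \in \lin C$, a segment $(y, y') \subseteq C$ exists, and applying Proposition~\ref{prop:technical} twice — once to push through $x$, once to push through $y$ — produces a point $w \in C$ with $q \in (u,w)$, so $q \in \icr C$. This gives $[x,y) \subseteq \icr C$, and then choosing $z \in C$ with $x \in (z,y)$ (possible as $x \in \icr C$, $y \in C \subseteq \lin C$ — or directly from $y \in \lin C$ via the segment into $C$) and reapplying the first part gives $x \in (z,y) \subseteq \icr C$.

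For item~\ref{calc:aclicr}: $C \subseteq \lin \icr C$ is immediate from \ref{calc:bdsegment}, since every $x \in C$ lies in $[p,x)$-closure of a segment from some fixed $p \in \icr C$, hence $x \in \lin \icr C$. Then $\lin \icr C \subseteq \lin C$ trivially, and $\lin C \subseteq \lin \lin \icr C = \lin \icr C$ would need linear closure to be idempotent — which in general it is \emph{not}, so instead I would show $\lin C \subseteq \lin \icr C$ directly: a point $x \in \lin C$ has $(x,x') \subseteq C$ for some $x'$, and refining with a point of $\icr C$ via \ref{calc:bdsegment} shows the segment can be taken inside $\icr C$, so $x \in \lin \icr C$. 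Finally $\icr(\lin C) = \icr C$: the inclusion $\icr C \subseteq \icr(\lin C)$ uses \ref{calc:bdsegment} (any $y \in \lin C$ can be reached and passed), and $\icr(\lin C) \subseteq \icr C$ requires showing a point $x$ that is an inner point of $\lin C$ is already an inner point of $C$ — here I would use that given $y \in C \subseteq \lin C$ there is $z \in \lin C$ with $x \in (y,z)$, and then $[x,z) \subseteq \icr(\lin C)$, but one must land back in $C$; picking $z' \in (x,z) \cap C$ close enough works since $\lin C = C \cup \lbd C$ and $z' \neq z$ can be chosen in $C$. Item~\ref{calc:afficr}: $\aff C \subseteq \aff \lin C$ is monotonicity; for the reverse, every point of $\lin C$ is a limit-type combination but more usefully is an endpoint of a segment in $C$, hence lies in $\aff C$ (an affine subspace is closed under the relevant line operations — indeed if $(x,x')\subseteq C$ then $x \in \aff\{$two interior points$\} \subseteq \aff C$). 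For $\aff \icr C = \aff C$ when $\icr C \neq \emptyset$: $\subseteq$ is monotonicity, and $\supseteq$ follows because by \ref{calc:bdsegment} every $y \in C$ lies on a line through a fixed $p \in \icr C$ and another point of $\icr C$, so $C \subseteq \aff \icr C$.

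The main obstacle I anticipate is the idempotency trap in item~\ref{calc:aclicr}: because $\lin$ is genuinely not idempotent in infinite dimensions (as the paper itself flags, citing \cite{KleeLin,nikodym}), one cannot prove $\lin C = \lin \icr C$ by the lazy sandwich $\lin \icr C \subseteq \lin C \subseteq \lin\lin\icr C$. Every equality involving $\lin$ must be obtained by producing, for each endpoint, an \emph{actual} segment inside the target set — and this is exactly where Proposition~\ref{prop:technical} and the segment-refinement statement \ref{calc:bdsegment} do the real work. Secondarily, keeping the degenerate-segment bookkeeping consistent (points of $C$ that are their own "segment", singletons, etc.) needs a light but careful touch throughout.
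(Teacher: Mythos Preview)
Your overall strategy matches the paper's, and items~\ref{calc:acldecomp}, \ref{calc:aclandabd}, \ref{calc:afficr}, together with the parts of~\ref{calc:aclicr} establishing $C\subseteq\lin\icr C$ and $\lin C=\lin\icr C$, are fine and essentially identical to the paper's arguments. There are, however, two genuine gaps.

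\textbf{Item~\ref{calc:bdsegment}.} You fix $q\in(x,y)$ and, for an \emph{arbitrary} $u\in C$, attempt to produce $w\in C$ with $q\in(u,w)$ via two applications of Proposition~\ref{prop:technical}. The first application (from $x\in(u,v)$ and $q\in(x,y)$) yields $w_1\in(v,y)$ with $q\in(u,w_1)$, but $w_1$ need not lie in $C$ since $y\notin C$; and no second application of Proposition~\ref{prop:technical} repairs this for a \emph{general} $u$, because the only additional information about $y$ is the single segment $(y,y']\subseteq C$, which bears no relation to the direction determined by $u$. The paper instead takes $u$ to be the \emph{specific} point on $(y,y']$: the two applications of Proposition~\ref{prop:technical} then establish only $q\in C$, and the upgrade to $q\in\icr C$ is obtained afterwards from Proposition~\ref{prop:basicsegment} by choosing $r\in(q,y)\subseteq C$ and observing $q\in[x,r)\subseteq\icr C$.

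\textbf{Item~\ref{calc:aclicr}, the inclusion $\icr(\lin C)\subseteq\icr C$.} You take $x\in\icr(\lin C)$, an arbitrary $y\in C$, and $z\in\lin C$ with $x\in(y,z)$, and then assert that one can pick $z'\in(x,z)\cap C$ ``since $\lin C=C\cup\lbd C$''. This does not follow: nothing rules out $(x,z)\subseteq\lbd C\setminus C$. (The assertion is in fact true, but only as a \emph{consequence} of the very equality you are proving, so it cannot serve as a step.) The fix is to abandon the attempt to verify the definition of $\icr C$ at an arbitrary $y$ and instead use the hypothesis $\icr C\neq\emptyset$ directly: pick $p\in\icr C\subseteq\lin C$; since $x\in\icr(\lin C)$ there is $w\in\lin C$ with $x\in(p,w)$, and then part~\ref{calc:bdsegment} gives $x\in(p,w)\subseteq[p,w)\subseteq\icr C$ immediately.
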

\begin{proof}

To show \ref{calc:acldecomp}, let $x\in C$ and observe that  $x\in \lbd C$ if and only if there is a line segment with the endpoint $x$ that can not be continued beyond $x$, which is equivalent to $x\notin \icr C$. 

To show \ref{calc:aclandabd}, first observe that the second equality follows from \ref{calc:acldecomp}: $C\cup \lbd C = (C\setminus \lbd C) \cup \lbd C = \icr C \cup \lbd C$. For the first relation, observe that $\lbd C\subseteq \lin C$ and $C\subseteq \lin C$, hence we only need to show that $\lin C\subseteq C\cup \lbd C$.  If $x\in \lin C$, and $x\notin C$, then $x$ must be the endpoint of some segment $(x,x+u)$ contained in $C$. Since $x\notin C$, the extension of this segment beyond $x$, that is, $(x,x-u)$ must have an empty intersection with $C$. We conclude that $x\in \lbd C$.

For \ref{calc:bdsegment} let $x\in \icr C$, $y\in \lin C$. If $y\in C$, then we are done by Proposition~\ref{prop:basicsegment}. Otherwise there must exist some $u\in C$ such that $u\neq y$ and  $(y,u]\subseteq C$.

Since $x\in \icr C$, there exists $v\in C$ such that $x\in (u,v)$. Then by Proposition~\ref{prop:technical} for any point $z'\in (x,y)$ there exits $w\in (v,y)$ such that $z'\in (u,w)$. Now fix any $p\in (z',w)\subseteq (u,w)$. Applying Proposition~\ref{prop:technical} again, this time to $y,v,w,u$ and $p$ (see Fig.~\ref{fig:technical04}),
\begin{figure}[ht]
    \centering
    \includegraphics[width=0.45\textwidth]{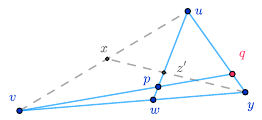}
    \caption{Applying Proposition~\ref{prop:technical} twice in the  proof of Proposition~\ref{prop:calcintcore} \ref{calc:bdsegment}.}
    \label{fig:technical04}
\end{figure}
we conclude that there exists $q\in (u,y)$ such that $p\in (v,q)$. Now $q\in (u,y)\subseteq C$, $v\in C$ and $p\in (v,q)\subseteq C$. Since $z'\in (u,p)$, with $u\in C$, and hence $z'\in C$, by the arbitrariness of $z'$ this proves that $[x,y)\subseteq C$. Now for any point $r\in (x,y)$ we have 
$r\in C$, hence, by Proposition~\ref{prop:basicsegment} we obtain $[x,r)\subseteq \icr C$, and hence $[x,y)\subseteq \icr C$, and also by the same proposition there exists some $z\in \icr C$ such that $x\in (z,r)\subset (z,y)$. 

For \ref{calc:aclicr}, we begin with $\lin C = \lin \icr C$. Since $C \subset \icr C$, we only need to prove that $\lin C\subseteq \lin \icr C$. Take any $y\in \lin \icr C$. If $y\in \icr C$, then $y\in C \subset \lin C$ and we are done. If $y\in \lin \icr C$, take any $x\in \icr C$. By \ref{calc:bdsegment} we have $[x,y)\subseteq \icr C\subset C$. Hence $y$ must be in $\lin C$. 

Observing that $C \subset \lin C$, the relation $\lin C = \lin \icr C$ that we just proved yields $C \subseteq \lin \icr C$.  It remains to prove $\icr (\lin C)=\icr C$. Since  $\icr C\subseteq \icr(\lin C)$ we only need to prove that $\icr(\lin C)\subseteq \icr C$. Take any $y\in \icr (\lin C)$. Since $\icr C\neq \emptyset$, there exists some $x\in \icr C$. Since $y\in \lin C$, by \ref{calc:bdsegment} there exists $z\in \icr C$ such that $x\in (y,z) \subseteq \icr C$, concluding the proof of this item.


 For \ref{calc:afficr} first note that $\aff C$ contains all lines with two different points on $C$, then $\lin C \subseteq \aff C$, implying that $\aff C=\aff(\lin C)$. Using \ref{calc:aclicr}, $C\subseteq \lin(\icr C)$, then $\aff C\subseteq \aff(\lin(\icr C))=\aff(\icr C)$. The conclusion follows because by other hand, $\aff(\icr C)\subseteq \aff C$. \end{proof}


Note that in \cite{DangEtAl}  the relation \ref{calc:bdsegment} is shown using separation theorems (see Theorem~3.7), however our proof is much more elementary.

In \cite[Theorem~2.1]{InnerStructure} it was shown that  $M\subseteq X$ is an affine subspace if and only if $M$ is convex, $\icr M = M$ and $\lbd M = \emptyset$. Indeed we observed earlier that for a linear subspace $M$ we have $\icr M = M$,  $\lbd M  \subset \aff M = M $ and $M = \icr M  =  M \setminus \lbd M$. 

Since in general $\lin C \neq  \lin \lin C \neq \lcl C$, it makes sense to define $n$-th (weak) linear closure, and we can do this for any ordinal in the following way (see \cite{KleeLin}):
\[
\lin^\beta C = \begin{cases}
\lin \lin^{\beta-1} C & \text{if } \beta-1 \text{ exists,}\\
\cup_{\alpha<\beta} \lin^\alpha C & \text{if } \beta \text{ is a limit ordinal.}
\end{cases}
\]
It was shown in \cite{nikodym} that there exists a convex set for which $\lin^\alpha C \neq \lin^{\alpha-1} C$ for all ordinals $\alpha<\omega_1$ (where $\omega_1$ is the first uncountable ordinal). At the same time, $\lcl C = \lin^{\omega_1} C$ (see \cite{nikodym}).

We can define the $n$-th (weak) linear boundary as ${\lbd}^n C = \lin^n C\setminus \icr C$, and likewise the strong linear boundary as $\lcl C \setminus \icr C$. 

The relation $\icr(\lin C)  = \icr C$ fails to hold generically in infinite-dimensional spaces, this is due to the existence of proper convex sets that are \emph{ubiquitous}, that is, such that their linear closure coincides with the entire space.

\begin{theorem}[see \cite{kleepart1}] The linear space $X$ is inifinte dimensional if and only if $X$ contains a proper convex subset $C$ such that $\lin C = X$. 
\end{theorem}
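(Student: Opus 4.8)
The plan is to prove the two implications separately. The implication "such $C$ exists $\Rightarrow X$ infinite-dimensional" (equivalently: if $X$ is finite-dimensional then no proper convex $C$ satisfies $\lin C = X$) should reduce to a short argument using the standard topology of a finite-dimensional space, whereas the converse requires an explicit construction built from a Hamel basis, and this is where the real work lies.

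For the finite-dimensional case, suppose $\dim X<\infty$ and $C\subseteq X$ is convex with $\lin C=X$; I claim $C=X$, so that $C$ cannot be a \emph{proper} subset. The key observation is $\lin C\subseteq \cl C$ in the usual topology: if $x\in\lin C$ then either $x\in C$, or there is $u\neq 0$ with $(x,x+u)\subseteq C$, and in the second case $x$ is a limit of points of $C$. Hence $\cl C=X$; since $\aff C$ is a closed affine subspace containing $C$, this forces $\aff C=X$, so the nonempty convex set $C$ has nonempty interior, and since $\interior\cl C=\interior C$ for convex sets we get $\interior C=X$, whence $C=X$. These are standard facts about convex sets in $\R^n$ (see e.g.\ \cite{RockConvAn}).

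For the converse, assume $X$ is infinite-dimensional. Following Klee, I would first extract a countably infinite linearly independent family $e_1,e_2,\dots$ and extend it to a Hamel basis $\{e_i\}_{i\in I}$ with $\N^*\subseteq I$, so that every $x\in X$ is uniquely $x=\sum_{i\in I}x_ie_i$ with finitely many nonzero coordinates. For $x\in X$ put $N(x):=\max\{n\in\N^*: x_n\neq 0\}$ if this index set is nonempty and $N(x):=0$ otherwise, and define
\[
C:=\{0\}\cup\bigl\{x\in X\setminus\{0\}\ :\ N(x)\ge 1\ \text{and}\ x_{N(x)}>0\bigr\}.
\]
I would then verify three things. (a) \emph{$C$ is convex}: for $a,b\in C$ and $t\in[0,1]$, the cases where $a$ or $b$ equals $0$ are immediate (a positive scaling of a point of $C$, or an endpoint, or $0$ itself), while for nonzero $a,b$ one splits into $N(a)=N(b)=:n$ (the $n$-th coordinate of $ta+(1-t)b$ is a positive combination of $a_n>0$ and $b_n>0$, and all $\N^*$-coordinates above $n$ vanish) and $N(a)<N(b)=:n$ (the $n$-th coordinate of $ta+(1-t)b$ equals $(1-t)b_n>0$ for $t<1$, with all $\N^*$-coordinates above $n$ vanishing, and $t=1$ gives $a$); in every case $ta+(1-t)b\in C$. (b) \emph{$C$ is proper}: $C\neq\emptyset$ and $-e_1\notin C$. (c) \emph{$\lin C=X$}: given $x\in X$, let $m:=\max(\{n\in\N^*:x_n\neq 0\}\cup\{0\})$; every point of the open segment $(x,\,x+e_{m+1})$ has its $(m+1)$-st coordinate strictly positive and all its $\N^*$-coordinates above $m+1$ equal to $0$, hence lies in $C$, so $x\in\lin C$; and $0\in C\subseteq\lin C$.

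The main obstacle is the infinite-dimensional direction, and within it the bookkeeping needed to make the construction work uniformly when the Hamel basis is uncountable: the index $N(x)$ must be defined using only the countable sub-family $\{e_n\}_{n\in\N^*}$ and must ignore the remaining basis directions, and one must check that shifting $x$ by the single vector $e_{m+1}$ — with $m+1\in\N^*$ chosen past the $\N^*$-support of $x$ — really does push the whole open segment into $C$ irrespective of the other components of $x$. Convexity is the delicate point (a naive variant that drops the element $0$, or that uses the last nonzero coordinate over all of $I$ rather than over $\N^*$, can fail), so I would treat the case $N(a)\neq N(b)$ with care; the rest is routine verification.
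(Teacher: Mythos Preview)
Your proof is correct and follows essentially the same route as the paper: for the nontrivial (infinite-dimensional) direction you build Klee's ``last positive coordinate'' set from a Hamel basis, with the small simplification of ordering only a countable subfamily $\{e_n\}_{n\in\N^*}$ rather than totally ordering the entire basis without a greatest element as in Example~\ref{eg:ubiquitous}. One minor aside: contrary to your parenthetical, dropping $0$ from your $C$ would not break convexity, since for $a,b\in C\setminus\{0\}$ the coordinate at index $\max\{N(a),N(b)\}$ of any point in $(a,b)$ is strictly positive and hence that point is nonzero; this does not affect your argument.
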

The proof of this theorem is based on an explicit construction of such set, which we study in the next example. This example was publishedin \cite{kleepart1} and according to our Referee, can be attributed to M.~M.~Day.

\begin{example}\label{eg:ubiquitous}

Any linear vector space has a Hamel basis that can be totally ordered, moreover, if the space is infinite-dimensional, we can choose the order in a way that there isn't a greatest element. We can then represent any element as a finite linear combination of the basic points. There will be one coordinate that is maximal with respect to the total order, and we can define the set $C$ as the set of all points with the positive last coordinate. This set is convex, and its linear closure constitutes the entire space $X$: indeed, any point $x\in X$  is either in $C$, or is the endpoint of some line segment in $C$ obtained by adding a small positive coordinate with a higher index. Somewhat counterintuitively, the complement of this set is also convex and ubiquitous (following the same logic).

It is easy to see why the intrinsic core of such a set is empty: take any element $x$ in this set, and then pick $y$ with a higher positive coordinate. Evidently the line through $x$ and $y$ can not be extended beyond $x$, so $x$ is not in the intrinsic core. Hence $\icr C= \emptyset$. 

When it comes to the facial structure of this set, observe that for any $x,y\in C$ there exists $z\in C$ such that $x\in (y,z)$ if and only if the `index' of the last coordinate of $y$ is no larger than the index of the last coordinate of $x$. Therefore, the minimal face of $x$ is the subset of $C$ that contains all elements with the last coordinate not larger than $x$. This in particular means that the set $C$ is the union of the chain of all minimal faces. 

This property of chain of minimal faces of points is not always satisfied. Take $X$ a vector space with an uncountable Hamel basis, and define the convex set $C$ as the set of all the elements from $X$ with non-negative coordinates. The faces of $C$ are the cones spanned by subsets of the Hamel basis, while minimal faces of any point in $C$ is the cone spanned by finite subsets of the Hamel basis. The only faces which are reunions of chains of minimal faces of points in $C$ are the cones spanned by finite or countable subsets of the Hamel basis. Accordingly, all the faces spanned by infinite uncountable subsets of the Hamel basis (including $C$) cannot be represented as the union of a chain of minimal faces of points of $C$. We thank our referee for suggesting this example.

\end{example}

Recall how in finite-dimensional case faces of closed convex sets are always closed. It appears that this phenomenon is specific to the linear  and not topological closure.

\begin{proposition} If $C$ is a linearly closed convex subset of a vector space $X$, then every face of $C$ is linearly closed.
\end{proposition}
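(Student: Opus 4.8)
The plan is to take a face $F$ of the linearly closed convex set $C$, suppose $(x,y)\subseteq F$, and show $[x,y]\subseteq F$ — concretely, that the endpoint $y$ lies in $F$ (the argument for $x$ being symmetric). The first observation is that since $(x,y)\subseteq F\subseteq C$ and $C$ is linearly closed, we immediately get $[x,y]\subseteq C$; the only issue is promoting membership from $C$ to $F$. So the real task is to produce, for the point $y$, a line segment inside $C$ that straddles $y$ in its open interior and has $y$ as a \emph{non}-endpoint, because then the defining property of a face forces both endpoints — and hence $y$ — into $F$. Wait: that is not quite the right move, since it is $y$ itself we must place inside $F$, not its neighbours. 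The cleaner route is as follows.

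First I would pick an interior point $m\in(x,y)$, so $m\in F$. Since $m\in F$ and $F$ is a face, to conclude $y\in F$ it suffices to exhibit points $u,v\in C$ with $m\in(u,v)$ and $y\in[u,v]$; then $u,v\in F$ by the face property, and $y\in[u,v]\subseteq F$ by convexity of $F$. One natural choice is $u=x$ (or some point of $(x,m)$) and $v=y$: indeed $m\in(x,y)$ already, and we need $x,y\in C$. We have $x\in C$ and $y\in C$ precisely because $C$ is linearly closed and $(x,y)\subseteq C$. Hence $x,y\in C$, $m\in(x,y)$, so by the definition of a face applied to $m\in F$ we get $x,y\in F$, and therefore $[x,y]\subseteq F$ by convexity. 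This completes the argument.

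The one subtlety worth spelling out is the degenerate case: if $x=y$ then $(x,y)=\{x\}=[x,y]$ and there is nothing to prove, so we may assume $x\neq y$ and pick a genuine interior point $m$; this is exactly the reason the paper's unified treatment of degenerate segments (from Section~\ref{sec:faces}) is convenient here. I expect no serious obstacle: the statement is essentially a direct unwinding of the definitions of \emph{face} and of \emph{linearly closed}, with the linear closedness of $C$ doing the single job of placing the endpoints $x,y$ of the segment into $C$ so that the face condition can be invoked. The proof is a few lines and requires no machinery beyond the definitions and convexity of $F$.
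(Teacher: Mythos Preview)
Your proposal is correct and follows essentially the same route as the paper: use linear closedness of $C$ to place the endpoints $x,y$ in $C$, then pick any $m\in(x,y)\subseteq F$ and invoke the face property to conclude $x,y\in F$. The paper's proof is just the terse version of your final paragraph, without the exploratory detours.
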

\begin{proof} Let $C$ be an linearly closed subset of $X$, and let $F$ be an arbitrary face of $C$. Our goal is to show that for any $(x,y)\subseteq F$ we have $[x,y]\subseteq F$. 

Let $(x,y)\subseteq F\subseteq C$. Since $C$ is linearly closed, $x,y\in C$. Now pick any  $z\in (x,y)$. Since $z\in F$, by the definition of a face we must have $x,y\in F$. 
\end{proof}

The next example shows that faces of (topologically) closed convex sets do not need to be closed. This example is used in \cite{hitchhiker} to demonstrate that the convex hull of a compact set may not be compact (and even closed).

 \begin{example}[{Example~{5.34} from \cite{hitchhiker}}]\label{counterconj}
Let $A\subset \ell_2$ be defined as $A=\{0,e_1,\frac{e_2}{2},\dots, \frac{e_n}{n},\dots\}$, where $e_i$ is the sequence of all ones except for $1$ in the $i$-th position.  Let $F=\co A$ and $C=\cl \co A$ be the $l_2$-closure of the convex hull of $A$. We will show that $F\unlhd C$. Note that $F$ is not closed (but $A$ is compact, see \cite{hitchhiker}), while $C$ is closed by the definition. Also both $F$ and $C$ are convex and $F\subset C$. It remains to show that $F$ is a face of $C$. 

Take $x\in F$, and suppose that $y,z\in C$ are such that $x = t y + (1-t) z$ for some $t\in (0,1)$. Since $e_1,e_2,\dots$ is an orthonormal basis of $l_2$, we can write 
\[
y = \sum_{i=0}^\infty \alpha_i e_i, \quad z= \sum_{i=0}^\infty \beta_i e_i,
\]
where $\alpha_i\geq 0$, $\beta_i\geq 0$ for all $i\in \N\cup\{0\}$, and $\sum \alpha_i = \sum \beta_i = 1$. 

Since $x\in F$, there exists a finite index set $I\subset \N\cup \{0\}$ such that  
\[
x = \sum_{i\in I } \gamma_i e_i, \quad \gamma_i \geq 0, \sum_{i\in I} \gamma_i = 1.
\]
From $ t y + (1-t) z - x =0$ we have 
\[
t \alpha_i + (1-t) \beta_i  = 0 \quad \forall i \notin I,
\]
hence, $y$ and $z$ are finite convex combinations of points in $A$, and so $y,z\in F$. Hence by the definition of a face $F\lhd C$. 

\end{example}

In \cite{zbMATH03075149}, there is an interesting example of two convex and linearly closed sets whose convex hull is not linearly closed.

Thinking in terms of linear closure helps further refine the relations between faces and the intrinsic core. 

\begin{proposition}\label{prop:Fabd} Let $D$ be a convex subset of a real vector space $X$, and let $F$ be a proper face of $D$ (that is, $F$ is nonempty and does not coincide with $D$). Then $F\subseteq \lbd D$. 
\end{proposition}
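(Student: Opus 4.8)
The plan is to use the characterisation of the intrinsic core via the algebraic boundary, namely Proposition~\ref{prop:calcintcore}\ref{calc:acldecomp}, which gives $\icr D = D\setminus \lbd D$. Since $F\subseteq D$, it is enough to show that $F$ contains no point of $\icr D$; then every point of $F$ lies in $D\setminus \icr D = \lbd D$. So the goal reduces to showing $F\cap \icr D = \emptyset$ whenever $F$ is a proper face of $D$.

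Suppose for contradiction that there exists $x\in F\cap \icr D$. The idea is that a point in the intrinsic core can only lie on a face that is the whole set. Concretely, by Proposition~\ref{prop:charicore} (or Definition~\ref{def:viaminface}), $x\in\icr D$ means $F_{\min}(x,D)=D$. But $F$ is a face of $D$ containing $x$, so by the minimality of $F_{\min}(x,D)$ we have $D = F_{\min}(x,D)\subseteq F\subseteq D$, forcing $F=D$. This contradicts the assumption that $F$ is proper. Hence $F\cap\icr D=\emptyset$, and therefore $F\subseteq D\setminus\icr D = \lbd D$ by Proposition~\ref{prop:calcintcore}\ref{calc:acldecomp}, where the last equality uses that $\lbd D = D\setminus \icr D$ holds for points of $D$ (for $x\in D$, $x\in\lbd D$ iff $x\notin\icr D$).

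An alternative, equally short route avoids Proposition~\ref{prop:calcintcore} and argues directly: if $x\in F$ were in $\icr D$, then by Definition~\ref{def:segments} for the point $y$ witnessing... actually the cleanest direct argument is via Theorem~\ref{thm:UnionRiFaces}: every point of $D$ lies in $\icr G$ for a \emph{unique} face $G\unlhd D$, and for $x\in\icr D$ that unique face is $D$ itself; since $x\in F\unlhd C$ forces (via Lemma~\ref{lem:faceofaface}) $F\unlhd D$ and hence $x$ would also lie in $\icr E$ for some face $E\unlhd F\subsetneq D$, contradicting uniqueness unless $F=D$.

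There is essentially no obstacle here: the statement is a direct corollary of the face/intrinsic-core dictionary already established (Proposition~\ref{prop:charicore}, Theorem~\ref{thm:UnionRiFaces}, Proposition~\ref{prop:calcintcore}). The only point requiring a moment's care is the observation that for a point $x\in D$ the condition $x\notin\icr D$ is equivalent to $x\in\lbd D$ — i.e. that $\lbd D$, as defined, captures exactly the boundary points of $D$ relative to $D$ — but this is precisely the content of Proposition~\ref{prop:calcintcore}\ref{calc:acldecomp} and needs no new work.
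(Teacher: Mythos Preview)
Your proof is correct and follows essentially the same route as the paper's: assume some $x\in F$ escapes $\lbd D$, use Proposition~\ref{prop:calcintcore}\ref{calc:acldecomp} to get $x\in\icr D$, then invoke the minimal-face characterisation of the intrinsic core to force $F=D$. The only cosmetic difference is that the paper cites Corollary~\ref{cor:minfaceicr} where you cite Proposition~\ref{prop:charicore}, but these are the same fact.
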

\begin{proof}
Under the assumptions of the proposition, suppose that the conclusion is not true. Then there exists a point $x\in F\setminus \lbd C$. By Proposition~\ref{prop:calcintcore}~\ref{calc:acldecomp} we then have $x\in \icr C$. By Corollary~\ref{cor:minfaceicr} we must have $D = F_{\min}(x,D)\subseteq F$, which contradicts the assumption that $F$ is a proper face of $D$.
\end{proof}

Notice that $A\subseteq B$ does not yield $\icr A \subseteq B$, even in the finite-dimensional case. For example, letting $A = \{x\}$ and $B = [x,y]$, where $x$ and $y$ are distinct elements of some vector space $X$, results in $A \subset B$, but $\icr A = \{x\}\notin (x,y) = \icr B$. We can however obtain meaningful relations between the intrinsic cores of certain structured subsets of convex sets, as shown next. 

\begin{proposition} Let $F$ be a proper face of $D$, where $D$ is a convex subset of a real vector space $X$. The set $D\setminus F$ is convex; moreover, $\icr (D\setminus F) = \icr D$. 
\end{proposition}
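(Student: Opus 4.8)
The plan is to establish the two assertions separately: first that $D\setminus F$ is convex, and then that $\icr(D\setminus F) = \icr D$.

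For convexity of $D\setminus F$, I would argue by contradiction. Suppose $x,y \in D\setminus F$ but some point $z\in (x,y)$ lies in $F$. Since $F$ is a face of $D$ and $z\in F$ with $z\in (x,y)$, $x,y\in D$, the definition of a face forces $x,y\in F$, contradicting $x,y\in D\setminus F$. Hence $(x,y)\subseteq D\setminus F$, and since $x,y\in D\setminus F$ as well, the whole segment $[x,y]$ is in $D\setminus F$, so the set is convex.

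For the equality of intrinsic cores, I would prove the two inclusions. For $\icr D \subseteq \icr(D\setminus F)$: if $x\in \icr D$, then by Proposition~\ref{prop:calcintcore}~\ref{calc:acldecomp} we have $x\notin \lbd D$, and by Proposition~\ref{prop:Fabd} the proper face $F$ satisfies $F\subseteq \lbd D$, so $x\notin F$, i.e. $x\in D\setminus F$. Now given any $y\in D\setminus F \subseteq D$, since $x\in \icr D$ there is $z\in D$ with $x\in (y,z)$; in fact by Proposition~\ref{prop:basicsegment} we may take $z\in\icr D$, and then the whole segment $(z,y)$ lies in $\icr D \subseteq D\setminus F$. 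In particular $z\in D\setminus F$, which verifies the segment-extension condition, so $x\in \icr(D\setminus F)$. For the reverse inclusion $\icr(D\setminus F)\subseteq \icr D$: take $x\in \icr(D\setminus F)$, so $x\notin F$. Given an arbitrary $y\in D$, I want $z\in D$ with $x\in (y,z)$. If $y\in D\setminus F$ this is immediate from $x\in \icr(D\setminus F)$. The interesting case is $y\in F$; here I would pick any point $p\in D\setminus F$ (which is nonempty since $\icr(D\setminus F)\ni x$), extend the segment from $p$ past $x$ inside $D\setminus F$ to get $x\in (p,q)$ with $q\in D\setminus F$, and then use Proposition~\ref{prop:technical}: applying it to the configuration where $x\in(p,q)$ and $x$ lies "between" $y$ and a suitable continuation, I can produce the desired extension point $z\in D$ on the far side of $x$ from $y$. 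More precisely, I expect to need a small geometric argument: consider the point $y'\in (x,p)$ — wait, rather I would take a point in $(q,y)$ and feed $y, q, p, x$ appropriately into Proposition~\ref{prop:technical}, obtaining a $w$ with $x$ in the interior of a segment with endpoint $y$ and the other endpoint a convex combination of points of $D$, hence in $D$.

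The main obstacle I anticipate is precisely this last case: extending segments from $x$ through points $y\in F$ while staying in $D$ (not necessarily in $D\setminus F$). The naive extension inside $D\setminus F$ is unavailable since $y\notin D\setminus F$, so one must genuinely use the ambient set $D$ and an auxiliary point of $D\setminus F$ together with Proposition~\ref{prop:technical} to interpolate. A cleaner alternative worth trying, which might avoid the explicit $\epsilon$-juggling, is to use the minimal-face characterisation: $x\in \icr(D\setminus F)$ gives, via Propositions~\ref{prop:FminS} and \ref{prop:charicore}, that $F_{\min}(x,D) = F_{\min}(D\setminus F, D)$, and one shows this common minimal face must be all of $D$ because any proper face containing $D\setminus F$ would, by convexity considerations, have to contain $F$ as well (using that $D\setminus F$ together with $F$ "spans" $D$ through segments), hence equal $D$ — giving $x\in\icr D$ by Proposition~\ref{prop:charicore}. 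I would present whichever of these two routes turns out shorter.
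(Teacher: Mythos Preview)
Your argument for convexity and for the inclusion $\icr D \subseteq \icr(D\setminus F)$ matches the paper's proof essentially verbatim. The interesting difference is in the reverse inclusion, specifically the case $y\in F$.

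Both of your proposed routes (auxiliary points $p,q$ fed into Proposition~\ref{prop:technical}, or the minimal-face argument via Propositions~\ref{prop:FminS} and~\ref{prop:charicore}) can be made to work, but you have overcomplicated this step. The paper's observation is much more direct: pick any $u\in (x,y)$. If $u$ were in $F$, then since $F$ is a face of $D$ and $u\in(x,y)$ with $x,y\in D$, we would get $x\in F$, a contradiction. Hence $u\in D\setminus F$. Now apply $x\in\icr(D\setminus F)$ to this $u$: there exists $v\in D\setminus F$ with $x\in(u,v)$, and since $u\in(x,y)$ we have $x\in(u,v)\subseteq(y,v)\subset D$, which is exactly the extension you need. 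No Proposition~\ref{prop:technical}, no auxiliary points, no minimal-face machinery.

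The key idea you missed is that the face property of $F$ does double duty: it not only gives convexity of $D\setminus F$ but also guarantees that the \emph{entire open segment} $(x,y)$ lies in $D\setminus F$ whenever $x\notin F$, regardless of where $y$ sits. That observation reduces the ``hard'' case $y\in F$ to the ``easy'' case instantly.
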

\begin{proof} To show that $D\setminus F$ is a convex set, let $x,y \in D\setminus F$. If there is a $z\in (x,y)$ such that $z\in F$, then by the definition of a face we must have $[x,y]\subseteq F$, a contradiction with the original choice of $x$ and $y$. 

To show that $\icr D = \icr (D\setminus F)$, we first demonstrate that $\icr D \subseteq \icr (D\setminus F)$. Take any $x\in \icr D$. From Proposition~\ref{prop:Fabd} we know that $x\in D\setminus F$. Now pick any $y\in D\setminus F\subset D$. Since $x\in \icr D$, by Proposition~\ref{prop:calcintcore}~\ref{calc:bdsegment} there exists $z\in \icr D$ such that $x\in (y,z)\subseteq \icr D\subseteq D\setminus F$ (applying Proposition~\ref{prop:Fabd} again). Since $y\in D\setminus F$ was arbitrary, we deduce that $x\in \icr (D\setminus F)$. 

It remains to show that $\icr (D\setminus F)\subseteq \icr D$. Take any $x\in \icr (D\setminus F)$, and let $y\in D$. If $y\in D\setminus F$, then there exists a $z\in D\setminus F \subset D$ such that $x\in (y,z)$, so we only need to deal with the situation when $y\in F$. In this case $x\neq y$. Let $u\in (x,y)$. If $u\in F$, then by the definition of a face $x\in F$, which is impossible. We conclude that $u\in D\setminus F$. Then there exists $v\in D\setminus F$ such that $x\in (u,v) \subseteq (y,v)\subset D$. We conclude that $x\in \icr D$. 
\end{proof}

\subsection{Separation and support points}

Recall (see \cite{Holmes}) that a hyperplane in a real vector space $X$ is a maximal proper affine subspace of $X$, or equivalently a level set of some nontrivial linear functional $\varphi:X\to \R$, 
\begin{equation}\label{eq:hyperplane}
H = \{x\in X\, |\, \varphi(x) = \alpha\}.
\end{equation}

A hyperplane $H$ defined as in \eqref{eq:hyperplane} is said to \emph{support} a convex set $C$ if $\varphi(x) \leq \alpha$ for all $x\in C$ (alternatively $\varphi(x)\geq \alpha$ for all $x\in C$), where the equality is satisfied for some $x\in C$.

Two convex sets $A$ and $B$ are \emph{separated} by a hyperplane $H$ if they lie in the two different (linearly closed) subspaces defined by the hyperplane $H$, so that, for instance, 
\[
\varphi(x) \leq \alpha \quad \forall x\in A, \qquad
\varphi(x) \geq \alpha \quad \forall y\in B.
\]
The sets $A$ and $B$ are \emph{properly separated} by $H$ if they are separated by $H$ and do not lie in their entirety on $H$, so $(A\cup B)\setminus H \neq \emptyset$. This is equivalent to the existence of a linear functional $\varphi:X\to \R$ such that 
\[
\sup_{x\in A} \varphi(x) \leq \inf_{y\in B} \varphi(y), \quad 
\inf_{x\in A} \varphi(x) < \sup_{y\in B} \varphi(y).
\]

Hyperplane separation in infinite-dimensional vector spaces is hinged on the following statement (see \cite[I.2.B]{Holmes}). 

\begin{lemma}[Stone] Let $A$ and $B$ be disjoint convex subsets of $X$. Then there exist complementary convex sets $C$ and $D$ such that $A\subseteq C$ and $B\subseteq D$. 
\end{lemma}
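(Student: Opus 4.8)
The plan is to prove Stone's lemma via a Zorn's lemma argument: among all pairs of disjoint convex sets $(C, D)$ with $A \subseteq C$ and $B \subseteq D$, we seek a maximal one with respect to a suitable partial order, and then show that maximality forces $C$ and $D$ to be complementary (i.e. $C \cup D = X$ and $C \cap D = \emptyset$). Since disjointness is already built in, the real content is showing that a maximal pair must cover all of $X$.

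First I would set up the poset. Consider the collection $\mathcal{P}$ of all pairs $(C, D)$ of disjoint convex subsets of $X$ with $A \subseteq C$ and $B \subseteq D$, ordered by $(C_1, D_1) \preceq (C_2, D_2)$ iff $C_1 \subseteq C_2$ and $D_1 \subseteq D_2$. This is nonempty since $(A, B) \in \mathcal{P}$. For a chain $\{(C_i, D_i)\}_{i \in I}$ in $\mathcal{P}$, the pair $(\bigcup_i C_i, \bigcup_i D_i)$ is again in $\mathcal{P}$: unions of chains of convex sets are convex (same argument as in Proposition~\ref{prop:unionchain}), the containments $A \subseteq \bigcup_i C_i$, $B \subseteq \bigcup_i D_i$ are clear, and disjointness is preserved because any point in both unions would, by the chain property, lie in some common $C_i$ and $D_i$, contradicting their disjointness. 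Hence every chain has an upper bound, and by Zorn's lemma there is a maximal element $(C, D)$.

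The main step is to show $C \cup D = X$. Suppose not, and pick $x \in X \setminus (C \cup D)$. The natural move is to enlarge $C$ to $\co(C \cup \{x\})$ and check whether this stays disjoint from $D$; if it does, maximality is contradicted. If it does not, then some point of $D$ lies on a segment $[c, x]$ with $c \in C$, i.e. there exist $c \in C$, $d \in D$, and $\lambda \in (0,1]$ with $d = \lambda x + (1-\lambda) c$ — note $\lambda \neq 0$ since $d \notin C$ would be violated, and the value $x \neq d$. Now I would instead try enlarging $D$ to $\co(D \cup \{x\})$; if that stays disjoint from $C$ we are again done. The crux is that \emph{both} enlargements cannot fail simultaneously: if $\co(C \cup \{x\})$ meets $D$ at a point $d_1 = \lambda_1 x + (1-\lambda_1) c_1$ and $\co(D \cup \{x\})$ meets $C$ at a point $c_2 = \mu_1 x + (1-\mu_1) d_2$, with $c_1, c_2 \in C$ and $d_1, d_2 \in D$ and $\lambda_1, \mu_1 \in (0,1)$, then solving for $x$ in both expressions and eliminating it produces a point that must lie in both $C$ and $D$ by convexity — the segment $[c_1, c_2]$ and the segment $[d_1, d_2]$ cross at a common point, since $x$ is a convex combination involving each pair. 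Working this out: from $x = \frac{1}{\lambda_1}(d_1 - (1-\lambda_1)c_1)$ and $x = \frac{1}{\mu_1}(c_2 - (1-\mu_1)d_2)$, one derives an affine relation expressing a convex combination of $c_1, c_2$ equal to a convex combination of $d_1, d_2$, contradicting $C \cap D = \emptyset$.

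I expect the main obstacle to be the bookkeeping in this last elimination step — verifying that the coefficients that emerge are genuinely nonnegative and sum to one, so that the resulting common point is legitimately in both convex hulls. This is the infinite-dimensional analogue of the elementary planar fact that two crossing segments share an interior point, and it should go through by a careful but routine algebraic manipulation (an application of Proposition~\ref{prop:technical} in the style already used repeatedly in the paper may streamline it). Once $C \cup D = X$ is established, $C$ and $D$ are complementary convex sets containing $A$ and $B$ respectively, completing the proof.
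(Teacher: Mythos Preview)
Your argument is correct, and the algebraic verification you worry about does go through: with $\lambda_1,\mu_1\in(0,1)$ the relation
\[
\frac{1}{\lambda_1}d_1+\frac{1-\mu_1}{\mu_1}d_2=\frac{1-\lambda_1}{\lambda_1}c_1+\frac{1}{\mu_1}c_2
\]
has all positive coefficients with equal sums $(\lambda_1+\mu_1-\lambda_1\mu_1)/(\lambda_1\mu_1)$ on both sides, so dividing by this common sum exhibits a point in $[d_1,d_2]\cap[c_1,c_2]\subseteq D\cap C$.

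The paper only sketches the proof, and its sketch differs slightly from your setup: rather than maximising a \emph{pair} $(C,D)$ of disjoint convex sets, it applies Zorn's lemma to a \emph{single} convex set $C$ that contains $A$ and is disjoint from $B$, and then argues that the complement $X\setminus C$ is itself convex. The underlying contradiction is the same crossing-segments computation you outline, just organised differently: in the single-set version one assumes two points $d_1,d_2\in X\setminus C$ have a midpoint in $C$, uses maximality to produce points of $B$ on $[c_i,d_i]$, and derives a point of $C\cap B$. Your symmetric pair formulation is equally valid and arguably cleaner, since it treats $A$ and $B$ on the same footing; the single-set version is marginally more economical in that Zorn is applied to a simpler poset.
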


The proof is based on using Zorn's lemma to build a maximal convex set that contains $A$ but not $B$ and showing that the complement must be convex. 

It can be further shown that the intersection of weak linear closures of two nonempty complementary convex sets is either the entire space or a hyperplane (see \cite[I.4.A.]{Holmes}). In the former case  there is no hyperplane separating these complementary convex sets. A sufficient condition for the separation of two disjoint convex sets is that at least one of them has a nonempty core (see \cite[I.4.B.]{Holmes} and \cite[5.61]{hitchhiker}).

\begin{theorem}[Theorem~5.61 from \cite{hitchhiker}] Two disjoint nonempty convex sets can be properly separated by a nonzero linear functional provided one of them has a nonempty core.
\end{theorem}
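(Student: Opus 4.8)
The plan is to reduce the statement to the core-based separation machinery the excerpt has already set up: the existence of complementary convex sets (Stone's lemma) together with the fact that the weak linear closures of two complementary convex sets intersect either in the whole space or a hyperplane. Concretely, let $A$ and $B$ be disjoint nonempty convex sets, and assume without loss of generality that $\core A \neq \emptyset$. First I would translate the problem by a point: pick $a_0 \in \core A$, replace $A$ by $A - a_0$ and $B$ by $B - a_0$, so that $0 \in \core A$, i.e.\ $A$ is absorbing. This does not affect disjointness or the existence of a separating functional (a functional separating the shifted sets separates the originals after adding back $\varphi(a_0)$), and it makes the subsequent arguments cleaner.

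Next I would invoke Stone's lemma to obtain complementary convex sets $C \supseteq A$ and $D \supseteq B$ with $C \cap D = \emptyset$ and $C \cup D = X$. The key structural input, quoted in the excerpt from \cite[I.4.A]{Holmes}, is that $\lin C \cap \lin D$ is either all of $X$ or a hyperplane $H$. So the crux is to rule out the first alternative using $\core A \neq \emptyset$. The claim I would establish is: if $0 \in \core A \subseteq C$, then $\core C \neq \emptyset$ (indeed $0 \in \core C$), and this forces $\lin C \cap \lin D$ to be a proper subset of $X$, hence a hyperplane. The reason a set with nonempty core cannot have $\lin C = X$ while its complement $D$ is nonempty: if $x \in \core C$ and $D \neq \emptyset$, take $y \in D$; since $C$ is absorbing at $x$ there is $z \in (x,y)$ with $[x,z] \subseteq C$, but then $y$ is linearly accessible to $D$ from the far side — more carefully, one shows that $\lin D$ cannot contain $\core C$, since a point of $\core C$ has a whole segment of $C$-points on every side of it, contradicting $D$ being a \emph{convex} complement that is linearly accessible there. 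Pinning down this exclusion cleanly is the step I expect to be the main obstacle; the cleanest route is probably to show directly that $x \in \core C$ implies $x \notin \lin D$ (any segment $(x,x+u)$ meets $C$, so it cannot be contained in $D$, so $x$ is not linearly accessible from $D$), whence $\core C \cap \lin D = \emptyset$ and in particular $\lin C \cap \lin D \neq X$.

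With $\lin C \cap \lin D = H$ a hyperplane, write $H = \{x : \varphi(x) = \alpha\}$ for a nontrivial linear functional $\varphi$. Since $C$ and $D$ are complementary convex sets with $C \cup D = X$ and $C,D$ lie on opposite sides of $H$ in the appropriate sense, one gets $\varphi(x) \le \alpha$ for all $x \in C$ and $\varphi(x) \ge \alpha$ for all $x \in D$ (or the reverse); this is the standard consequence of $H$ being the common linear-closure boundary of the two pieces — a point strictly on one side of $H$ lies in the weak linear closure of only one of $C,D$. Restricting to $A \subseteq C$ and $B \subseteq D$ gives $\sup_{x\in A}\varphi(x) \le \alpha \le \inf_{y\in B}\varphi(y)$, which is separation.

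Finally I would upgrade to \emph{proper} separation, i.e.\ $(A \cup B) \setminus H \neq \emptyset$. Here is where the core hypothesis pays off a second time: if $A \subseteq H$ entirely, then $A = A - a_0$ (post-shift) would be contained in the linear subspace $\{\varphi = 0\}$ of codimension one, contradicting $0 \in \core A$ (an absorbing set cannot lie in a proper subspace, since it must absorb vectors outside that subspace). Hence $A \not\subseteq H$, so some $a \in A$ has $\varphi(a) \neq \alpha$, giving $(A\cup B)\setminus H \neq \emptyset$ and therefore $\inf_{x\in A}\varphi(x) < \sup_{y\in B}\varphi(y)$. Undoing the translation by $a_0$ yields the separating functional for the original sets, completing the proof.
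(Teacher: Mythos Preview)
The paper does not actually prove this theorem: it is quoted from \cite{hitchhiker}, with the parenthetical remark that ``both these results can be proved using the Hahn--Banach extension theorem.'' So there is no detailed in-paper argument to compare against. Your proposal instead follows the Stone's-lemma/complementary-convex-sets route that the paper sketches in the two paragraphs immediately preceding the theorem (citing \cite[I.2.B, I.4.A]{Holmes}); in that sense you are filling in the outline the paper already gestures at, rather than taking a different path.

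On correctness: your argument that $x\in\core C$ implies $x\notin\lin D$ is clean and correct (any open segment $(x,x+u)$ meets $C$, hence cannot lie in $D$; and $x\notin D$), and this is exactly what forces $\lin C\cap\lin D\neq X$, hence a hyperplane $H$ by the cited Holmes result. Your proper-separation step is also fine: an absorbing set cannot sit inside a proper affine subspace, so $A\not\subseteq H$. The one place you hand-wave --- ``$C$ and $D$ lie on opposite sides of $H$ \ldots\ this is the standard consequence'' --- is the genuine work left. Knowing that $(\lin C\setminus H)$ and $(\lin D\setminus H)$ partition $X\setminus H$ does not by itself force each piece to be a single open half-space; one still has to argue (using convexity of $\lin C$, $\lin D$ together with $H\subseteq\lin C\cap\lin D$ and $\core C\neq\emptyset$) that neither piece straddles $H$. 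This is indeed part of the standard Holmes package, but since the paper only cites the ``$X$ or a hyperplane'' dichotomy explicitly, you would need either to invoke \cite[I.4.B]{Holmes} directly or supply the short extra argument yourself to close the proof.
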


This theorem can be generalised as shown in the following corollary (both these results can be proved using the Hanh-Banach extension theorem).

\begin{corollary}[Corollary~5.62 from \cite{hitchhiker}]\label{cor:sep} Let $A$ and $B$ be two nonempty disjoint convex subsets of a vector space $X$. If there exists a vector subspace $Y$ including $A$ and $B$ such that either $A$ or $B$ has nonempty core in $Y$, then $A$ and $B$ can be properly separated by a
nonzero linear functional on $X$.
\end{corollary}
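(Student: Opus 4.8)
The plan is to reduce Corollary~\ref{cor:sep} to the ambient-space separation theorem (Theorem~5.61 from \cite{hitchhiker}) by working inside the subspace $Y$ and then extending the separating functional from $Y$ to $X$ via the Hahn--Banach extension theorem.

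First I would apply Theorem~5.61 within the vector space $Y$: since $A$ and $B$ are disjoint nonempty convex subsets of $Y$, and one of them (say $A$, without loss of generality) has nonempty core with respect to $Y$, that theorem gives a nonzero linear functional $\psi:Y\to\R$ that properly separates $A$ and $B$ in $Y$; explicitly, $\sup_{x\in A}\psi(x)\le\inf_{y\in B}\psi(y)$ and $\inf_{x\in A}\psi(x)<\sup_{y\in B}\psi(y)$. Next I would extend $\psi$ to a linear functional $\varphi:X\to\R$; this is the step where one invokes Hahn--Banach in its purely algebraic (Zorn's-lemma) form: choose a Hamel basis of $Y$, complete it to a Hamel basis of $X$, and define $\varphi$ to agree with $\psi$ on $Y$ and to vanish on the complementary basis vectors. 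Then $\varphi|_Y=\psi$, so $\varphi$ restricted to $A\cup B$ coincides with $\psi$, and the proper separation inequalities for $\psi$ immediately become the proper separation inequalities for $\varphi$ on $A$ and $B$. Finally I would observe that $\varphi$ is nonzero on $X$ because it is already nonzero on $Y$, completing the proof.

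The main obstacle here is conceptual rather than technical: there is essentially no hard computation, but one must be careful that the extended functional $\varphi$ genuinely still properly separates $A$ and $B$ in $X$ — which it does precisely because proper separation of $A$ and $B$ is a statement entirely about the values of the functional on the set $A\cup B\subseteq Y$, so it is insensitive to how the functional is extended off $Y$. A secondary point worth stating clearly is that ``core in $Y$'' is exactly the hypothesis needed: the version of the separation theorem one applies must be the one for core (algebraic interior) rather than topological interior, since $Y$ carries no topology, and this is why the corollary is phrased in terms of the core and is a genuine strengthening of Theorem~5.61 (which would only give separation when $A$ or $B$ has nonempty core in all of $X$).
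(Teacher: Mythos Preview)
Your proposal is correct. The paper does not give its own proof of this corollary: it cites the result from \cite{hitchhiker} and only remarks parenthetically that ``both these results can be proved using the Hahn--Banach extension theorem.'' Your argument---apply Theorem~5.61 inside the subspace $Y$ to obtain a separating functional $\psi$ on $Y$, then linearly extend $\psi$ to $\varphi$ on $X$ (via a Hamel-basis complement, which is indeed the algebraic content of Hahn--Banach here), and observe that the proper-separation inequalities only involve values on $A\cup B\subseteq Y$---carries out exactly this sketch and is the standard route.
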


It is tempting to assume that it may be enough for the intrinsic core of one of the disjoint convex sets to be nonempty to achieve proper separation. However this is not true: for instance, let $A$ be the set considered in Example~\ref{eg:ubiquitous}, and suppose $B$ is any single point in the complement of $A$, meaning that $B$ has a nonempty intrinsic core. Notice that any hyperplane supporting some convex set should also support its linear closure. In our case the linear closure of $A$ is the entire space, and hence the only linear functional supporting $A$ is the trivial one (whose kernel coincides with $X$) that a) doesn't define a hyperplane and b) doesn't properly separate $A$ and $B$. However if both sets have nonempty intrinsic cores, this is enough to ensure proper separation (see Proposition~\ref{prop:icrsep} below).

A point $x\in C$ is called a \emph{support point} if it belongs to some supporting hyperplane. A support point is \emph{proper} if it lies in a supporting hyperplane that does not contain all of $C$ (see \cite{Holmes}). It is well-known that whenever $\icr C \neq \emptyset$, the proper support points of a convex set $C$ are exactly $C\setminus \icr C$. We provide a proof for completeness. 

\begin{proposition} If $\icr C \neq \emptyset$ for some convex subset $C$ of a real vector space $X$, then every $x\in C\setminus \icr C$ is a support point of $C$.
\end{proposition}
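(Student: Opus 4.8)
The plan is to reduce the statement to a separation result that has already been recorded in the excerpt, namely Corollary~\ref{cor:sep}. Fix $x\in C\setminus\icr C$. By Proposition~\ref{prop:calcintcore}~\ref{calc:acldecomp} we have $x\in\lbd C$, so there is a direction $u$ with $(x,x+u)\subseteq C$ and $(x,x-u)\cap C=\emptyset$. The first thing I would do is translate so that $x=0$ (using Corollary~\ref{cor:addpoint}, which moves $\icr$ along with translations), and pass to the linear subspace $Y:=\aff C$ (a genuine subspace now, since it contains $0$); this is harmless because supporting hyperplanes of $C$ in $Y$ extend to supporting hyperplanes of $C$ in $X$ via any linear complement.

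Next I would produce a convex set to separate $x$ from. The natural candidate is $A:=\icr C$ (nonempty by hypothesis) and $B:=\{x\}$. These are disjoint, because $x\notin\icr C$. Both live inside the subspace $Y=\aff C=\aff\icr C$ (the latter equality is Proposition~\ref{prop:calcintcore}~\ref{calc:afficr}). The key point is that $\icr C$ has nonempty \emph{core} relative to $\aff\icr C$: indeed, by Definition~\ref{def:affine} the intrinsic core of $C$ is exactly $\core_{\aff C}C$, and by Proposition~\ref{prop:basicicr} we have $\icr(\icr C)=\icr C$, so every point of $\icr C$ is a core point of $\icr C$ relative to $\aff\icr C=\aff C$. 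Hence the hypotheses of Corollary~\ref{cor:sep} are met (with the subspace $Y=\aff C$, and $A=\icr C$ carrying the nonempty core), and there is a nonzero linear functional $\varphi$ on $X$ properly separating $\icr C$ and $\{x\}$, say $\varphi(z)\le\varphi(x)$ for all $z\in\icr C$.

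Finally I would upgrade this to a supporting hyperplane of all of $C$ at $x$. Set $\alpha:=\varphi(x)$ and $H:=\{z\in X\mid\varphi(z)=\alpha\}$. We have $\varphi\le\alpha$ on $\icr C$; since $C\subseteq\lin\icr C$ (Proposition~\ref{prop:calcintcore}~\ref{calc:aclicr}) and $\varphi$ is linear, the inequality $\varphi\le\alpha$ persists on $\lin\icr C$ — any $c\in C$ is either in $\icr C$ or is the limiting endpoint of a segment $(c,c+v)\subseteq\icr C$, and along such a segment $\varphi\le\alpha$ forces $\varphi(c)\le\alpha$ by continuity of $\varphi$ on the affine line through $c$. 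Thus $\varphi\le\alpha$ on $C$, and $\varphi(x)=\alpha$ with $x\in C$, so $H$ supports $C$ at $x$, making $x$ a support point.

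The main obstacle I anticipate is purely bookkeeping rather than conceptual: making sure that the ``core relative to a subspace'' hypothesis of Corollary~\ref{cor:sep} is invoked with the right ambient subspace ($\aff C$, which is a vector subspace only after the translation to $x=0$), and that separation obtained inside $Y=\aff C$ genuinely yields a functional on all of $X$ — this is handled by extending $\varphi$ by zero on a linear complement of $Y$, which keeps it nonzero and keeps the inequality on $C\subseteq Y$ intact. One should also double-check the degenerate case $C=\{x\}$: but then $\icr C=C\ni x$, so $C\setminus\icr C=\emptyset$ and the statement is vacuous, so no issue arises.
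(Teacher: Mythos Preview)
Your argument is correct and takes a genuinely different route from the paper's. Both proofs translate so that $x=0$, work inside the linear subspace $Y=\aff C$, and invoke Corollary~\ref{cor:sep}; the difference is in \emph{which} two disjoint convex sets get separated. You separate $A=\icr C$ from $B=\{x\}$, using $\core_Y(\icr C)=\icr(\icr C)=\icr C\neq\emptyset$, and then push the inequality $\varphi\le\alpha$ from $\icr C$ to all of $C$ via $C\subseteq\lin\icr C$. The paper instead separates the minimal face $F=F_{\min}(x,C)$ from its complement $C\setminus F$: it invokes Proposition~\ref{prop:Fabd} and the subsequent proposition to see that $C\setminus F$ is convex with $\icr(C\setminus F)=\icr C$ (hence nonempty core in $Y$), applies Corollary~\ref{cor:sep} to the pair $(F,\,C\setminus F)$, and then argues separately that $\varphi$ vanishes on $F$ (using $0\in\icr F$). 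Your route is shorter and avoids the minimal-face apparatus entirely; the paper's route, while more circuitous, illustrates how the face decomposition interacts with separation and yields directly that the supporting hyperplane contains all of $F_{\min}(x,C)$.

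Two small remarks on presentation: the boundary direction $u$ you extract at the start is never used and can be dropped; and the discussion of extending $\varphi$ from $Y$ to $X$ by zero on a complement is redundant, since Corollary~\ref{cor:sep} already produces a nonzero functional on all of $X$.
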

\begin{proof} Suppose that a convex set $C$ has nonempty intrinsic core, and that $x\in C\setminus \icr C$. Our goal is to show that there exists a linear functional $\varphi:X\to \R$  such that 
\[
\sup_{u\in C}\varphi(u) \leq \varphi(x),  
\]
and moreover there is some $y\in C$ such that $\varphi(y)<\varphi(x)$, so the separation is proper.

We first prove this claim under the assumption that $x=0$. Let $F = F_{\min}(0,C)$. Since $x\notin \icr C$, $F$ is a proper face of $C$. By Proposition~\ref{prop:Fabd} the set $C\setminus F$ is convex, moreover, $\icr C = \icr (C\setminus F)$.
By Proposition~\ref{prop:calcintcore}~\ref{calc:afficr} we have $\aff \icr C = \aff C$, and together with $\icr C = \icr (C\setminus F) \subseteq C\setminus F \subset C$ this yields $\aff C = \aff (C\setminus F)$. By Definition~\ref{def:affine} of the intrinsic core, $\icr (C\setminus F) $ coincides with the core of $C\setminus F$ with respect to $\aff C$. Notice that since $0\in C \subset \aff C$, the affine hull $\aff C$ is actually a linear subspace of $X$. We can now apply Corollary~\ref{cor:sep} to $A = F$, $B = C\setminus F$ and $Y = \aff C$. There exist a linear functional $\varphi:X\to \R$  such that 
\begin{equation}\label{eq:312134234}
\sup_{u\in C\setminus F}\varphi(u)  \leq \inf_{v\in F}\varphi(v);\qquad
\inf_{u\in C\setminus F}\varphi(u)  < \sup_{v\in F}\varphi(v).
\end{equation}

Take any $x\in \icr C$. By Proposition~\ref{prop:calcintcore}~\ref{calc:bdsegment} we have $(0,x)\subseteq \icr C\subseteq C\setminus F$. Hence for any $t>0$
\[
t \varphi(x) = \varphi(t x ) \leq \varphi(0) = 0,
\]
and we conclude that 
\[
\sup_{u\in C\setminus F}\varphi(u) = 0.
\]
Now take any $p\in F$. Since $0\in \icr F$, there exists $q\in F$ such that $0 \in (p,q)$. It follows from $\varphi(q), \varphi(q)\geq 0$ and $\varphi(0) = 0$ that $\varphi(p) = \varphi(q) = 0$. 

Using all this knowledge, we rewrite \eqref{eq:312134234} as 
\begin{align*}
\sup_{u\in C}\varphi(u)  \leq \varphi(0) = 0;\qquad  \inf_{u\in C}\varphi(u)  < 0. 
\end{align*}

We have found a linear functional $\varphi:X\to \R$ such that this linear functional properly separates $0$ from $C$. 

We can now return to the case when $x\neq 0$. Let $D = C-\{x\}$. Then by Corollary~\ref{cor:addpoint} we have $\icr D = \icr C + \{x\}\neq \emptyset$; moreover, since $0\in D$, $Y = \aff D$ is a linear subspace. Observe also that $0\notin \icr D$. By our earlier result, there  exists a linear functional $\varphi:X\to \R$ such that
\begin{align*}
\sup_{u\in D}\varphi(u)  \leq \varphi(0) = 0;\qquad  \inf_{u\in D}\varphi(u)  < 0. 
\end{align*}
Observe that $u\in D$ iff $u= v-x$ for some $v \in C$, therefore this is equivalent to
\begin{align*}
\varphi(v) & \leq \varphi(x) \quad \forall v \in C;\\
\varphi(v') & < \varphi(x) \quad \text{ for some } v'\in C\setminus \{x\}, 
\end{align*}
hence we found a linear functional $\varphi:X\to \R$ and a constant $\alpha = \varphi(x)$ such that $\varphi$ properly separates $\{x\}$ from  $C$. 
\end{proof}

\begin{proposition}\label{prop:icrsep}
If $A$ and $B$ are convex subsets of a real vector space $X$ such that $\icr A, \icr B \neq \emptyset$ and moreover $\icr A \cap \icr B = \emptyset$, then $A$ and $B $ can be properly separated.
\end{proposition}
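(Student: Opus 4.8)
The natural strategy is to reduce the separation of $A$ and $B$ to the separation of $\icr A$ and $\icr B$, and then further reduce to a situation where Corollary~\ref{cor:sep} applies. First I would pass to the intrinsic cores: set $A_0 = \icr A$ and $B_0 = \icr B$. These are nonempty convex sets by Proposition~\ref{prop:icrisconvex}, and by hypothesis they are disjoint. If I can properly separate $A_0$ from $B_0$ by a linear functional $\varphi$, then since any hyperplane supporting a convex set also supports its linear closure, and by Proposition~\ref{prop:calcintcore}~\ref{calc:aclicr} we have $A\subseteq \lin \icr A = \lin A_0$ and $B\subseteq \lin B_0$, the same $\varphi$ will separate $A$ and $B$; properness is preserved because the strict inequality already holds on $A_0\cup B_0$. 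So it suffices to handle the case where both sets equal their own intrinsic cores.

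The next step is to translate so that $0$ plays a distinguished role and to find the right ambient subspace. The obstacle is that Corollary~\ref{cor:sep} requires one of the two sets to have nonempty \emph{core} (algebraic core) inside some common vector subspace $Y$, not merely nonempty intrinsic core. The remedy is to work inside $Y := \aff(A_0 - B_0)$ or, equivalently, to consider the difference set. Concretely, I would look at $D := A_0 - B_0 = \{a - b : a\in A_0,\, b\in B_0\}$, which is convex and, crucially, does \emph{not} contain $0$ since $\icr A\cap\icr B=\emptyset$. By Proposition~\ref{prop:sum}, $\icr D \supseteq \icr A_0 + \icr(-B_0) = \icr A_0 - \icr B_0 = A_0 - B_0 = D$ (using Proposition~\ref{prop:basicicr} that $\icr\icr A = \icr A$, and Corollary~\ref{cor:lambda} for the sign), so in fact $D = \icr D$. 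Now let $Y = \aff D$. Since $\icr D = D\neq\emptyset$, Definition~\ref{def:affine} tells us that $\icr D$ coincides with $\core_{\aff D} D = \core_Y D$, hence $D$ has nonempty core in the subspace $Y$. Note $Y$ is genuinely a linear subspace only if $0\in Y$; whether or not $0\in\aff D$, I can separate $\{0\}$ from $D$ inside $\lspan(D\cup\{0\})$, which does contain both.

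Then I apply Corollary~\ref{cor:sep} with the two disjoint nonempty convex sets $\{0\}$ and $D$, inside the vector subspace $\widetilde Y = \lspan(D \cup \{0\})$: one of them, namely $D$, has nonempty core in $\widetilde Y$ (a single point trivially has empty core unless $\widetilde Y = \{0\}$, which is excluded as $0\notin D$, so we really do need the core of $D$ here). This yields a nonzero linear functional $\varphi : X\to\R$ and, after rescaling, the inequalities $\varphi(0)=0 \le \varphi(d)$ for all $d\in D$ with strict inequality for some $d'\in D$ — equivalently $\varphi(b)\le\varphi(a)$ for all $a\in A_0,\, b\in B_0$ and $\varphi(b')<\varphi(a')$ for some $a'\in A_0$, $b'\in B_0$. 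This is precisely proper separation of $A_0$ and $B_0$, and by the first paragraph it extends to proper separation of $A$ and $B$.

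\textbf{Main obstacle.} The delicate point is the passage from ``nonempty intrinsic core'' to ``nonempty algebraic core in a suitable subspace'': intrinsic core alone is not enough for the Hahn--Banach based Corollary~\ref{cor:sep} (as the discussion preceding this proposition emphasises, one ubiquitous set plus a point cannot be separated even though the point has nonempty intrinsic core). The trick of forming the difference set $D = \icr A - \icr B$, observing $0\notin D$, showing $D = \icr D$, and then invoking Definition~\ref{def:affine} to identify $\icr D$ with the core of $D$ relative to $\aff D$ is exactly what converts the intrinsic-core hypothesis into the core hypothesis the separation theorem needs; verifying $0\notin D$ uses the disjointness of the intrinsic cores in an essential way, which is why both sides of the hypothesis ($\icr A\neq\emptyset$, $\icr B\neq\emptyset$, and $\icr A\cap\icr B=\emptyset$) are used.
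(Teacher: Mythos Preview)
Your overall strategy coincides with the paper's: form the difference $D=\icr A-\icr B$, observe $0\notin D$, separate $\{0\}$ from $D$ via Corollary~\ref{cor:sep}, and then transfer the inequality back to $A$ and $B$ using $A\subseteq\lin\icr A$ and $B\subseteq\lin\icr B$. That part is fine.

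The gap is in your treatment of the case $0\notin\aff D$. You enlarge the ambient subspace to $\widetilde Y=\lspan(D\cup\{0\})$ and assert that ``$D$ has nonempty core in $\widetilde Y$''. This is false. When $0\notin\aff D$, the affine hull $\aff D$ is a proper affine hyperplane of $\widetilde Y$, and since $D\subseteq\aff D$, moving from any point of $D$ in a direction transverse to $\aff D$ immediately leaves $D$; hence $\core_{\widetilde Y}D=\emptyset$. So Corollary~\ref{cor:sep} does not apply as you invoke it. (The case genuinely occurs: take $A=\{a\}$, $B=\{b\}$ with $a\neq b$.)

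The patch is elementary: if $0\notin\aff D$, pick any $d_0\in D$, write $\aff D=d_0+L$ with $L$ a linear subspace, and define $\varphi$ on $\widetilde Y=\R d_0\oplus L$ by $\varphi(d_0)=1$, $\varphi|_L=0$; then $\varphi\equiv 1$ on $D$ and $\varphi(0)=0$, which is proper separation, and any linear extension to $X$ works. The paper instead handles this case by passing to the positive hull of $A-B$ (Proposition~\ref{prop:tech324}): the positive hull $D'$ satisfies $0\in\aff D'$ automatically while preserving $0\notin D'$ and $\icr D'\neq\emptyset$, so Corollary~\ref{cor:sep} then applies in $\aff D'$. Either route closes the gap; your argument as written does not.
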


We first need the following technical claim.

\begin{proposition}\label{prop:tech324} Let $C$ be a convex subset of a real vector space $X$. If $\icr C \neq \emptyset$ then the positive hull of $C$,
\[
D = \{t x\, |\, x\in C, t>0\}
\]
has nonempty intrinsic core, moreover, $\icr C \subseteq \icr D$. 
\end{proposition}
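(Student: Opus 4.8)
The plan is to show that $\icr C \subseteq \icr D$ directly from Definition~\ref{def:segments}, since $D = \cup_{t>0} tC$ is clearly convex and contains $C$ (taking $t=1$), so once we establish $\icr C \subseteq \icr D$ we are done (and in particular $\icr D \neq \emptyset$). So fix $x \in \icr C$; the goal is to verify that for every $y \in D$ there is some $z \in D$ with $x \in (y,z)$.

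First I would unpack the membership $y \in D$: there is $y' \in C$ and $s > 0$ such that $y = s y'$. I would like to reduce to a point of $C$ rather than $D$, so I would consider the point $y'' = \lambda y' + (1-\lambda) x$ for a suitable $\lambda$; more directly, since $x \in \icr C$, by the definition of the intrinsic core applied to the point $y' \in C$ there exists $z' \in C$ with $x \in (y', z')$. The issue is that we need $x \in (y,z)$ with $y = sy'$, not $y = y'$, so the first endpoint has been rescaled. Here I would use the fact that $D$ is a cone: the segment from $y = sy'$ toward $x$ will cross the segment from $y'$ to $x$ (when $s \neq 1$ the points $y$, $y'$, $x$ are related by scaling the first coordinate relative to the origin), and more usefully, I can first replace $y$ by any point on the ray through $y$ that lies close to $x$. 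Concretely, since $x \in \icr C \subseteq C \subseteq D$ and $y' \in C$, by Proposition~\ref{prop:basicsegment} we have $[x, y') \subseteq \icr C$, hence the point $p = (1-\mu) x + \mu y'$ lies in $\icr C$ for $\mu \in [0,1)$.

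The cleanest route: note $x \in \icr C$, $z' \in C$ with $x \in (y', z')$, so $y' = x + \tau(x - z')$ for some $\tau > 0$; I want to produce $z \in D$ with $x \in (y, z)$, i.e. $y = s y' = x + \rho(x - z)$ for some $\rho > 0$. Solving, $z = x - \rho^{-1}(sy' - x)$. Choosing $\rho$ large enough that $\rho^{-1}(s y' - x) = \rho^{-1} s (y' - x)$ has small enough magnitude — specifically so that $z$ lies on the segment $[x, z'')$ for an appropriate $z'' \in C$ — will place $z$ in $\icr C \subseteq D$ by Proposition~\ref{prop:basicsegment}. More precisely: since $x \in \icr C$ and $x + t(x - y') \in C$ for $t = \tau$ (namely $z'$), Proposition~\ref{prop:basicsegment} applied with $y' \in C$ gives $z'' \in \icr C$ with $x \in (z'', y') \subseteq \icr C$; then for every sufficiently large $\rho$ the point $z := x - (s/\rho)(y' - x)$ lies in $(z'', y') \subseteq \icr C \subseteq D$, and by construction $x \in (y, z)$ since $x - y = -s(y' - x)$ and $x - z = (s/\rho)(y'-x)$ point in opposite directions. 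As $y \in D$ was arbitrary, $x \in \icr D$.

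The main obstacle is the bookkeeping around the scaling factor $s$: one must be careful that the candidate $z$ genuinely lies on the correct open sub-segment of $[x, y')$ (equivalently that $\rho$ can be chosen large enough independently of the constant hidden in Proposition~\ref{prop:basicsegment}), and that the degenerate case $y' = x$ (where $y = sx$ and one can simply take $z = (2-s)x \cdot$ scaled, or note $x \in (y, z)$ trivially along the ray) is handled. Everything else is a routine application of Proposition~\ref{prop:basicsegment} and the cone property of $D$.
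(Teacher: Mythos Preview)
Your argument has a genuine gap at the decisive step. You assert that $x - y = -s(y' - x)$, but since $y = sy'$ this reads $x - sy' = sx - sy'$, forcing $x = sx$; it holds only when $s=1$ or $x=0$. In general the vectors $y - x = sy' - x$ and $y' - x$ are \emph{not} parallel (take $C=[0,1]^2 \subset \R^2$, $x=(\tfrac12,\tfrac14)$, $y'=(1,1)$, $s=2$: then $y-x=(\tfrac32,\tfrac74)$ while $y'-x=(\tfrac12,\tfrac34)$). Consequently your candidate $z := x + (s/\rho)(x-y')$ lies on the line through $x$ and $y'$, whereas $x \in (y,z)$ would force $z$ onto the (different) line through $x$ and $y = sy'$; so your $z$ does not witness $x\in(y,z)$, and Proposition~\ref{prop:basicsegment} applied in the $y'$--direction is not enough. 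Precisely this non-collinearity of $x$, $y'$, $sy'$ is the ``bookkeeping around the scaling factor $s$'' you flag as the main obstacle, and your construction does not resolve it.

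The paper's proof meets this head on by splitting on the size of the scaling factor. When $t \ge 1$ (your $s$), the relation $x = \sigma y' + (1-\sigma)z'$ is algebraically rewritten as $x = \tfrac{\sigma}{t}(ty') + \bigl(1-\tfrac{\sigma}{t}\bigr) z$ with $z$ a positive multiple of $z'$, hence $z \in D$. When $t < 1$ one observes $y' \in (y, 2y')$ with $2y' \in D$, and then the two-dimensional gadget Proposition~\ref{prop:technical}, applied to $y,\,2y',\,z'$ with $y'$ on the side $(y,2y')$ and $x \in (y', z')$, manufactures $z \in (z', 2y') \subseteq D$ with $x \in (y,z)$. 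If you prefer a route closer to your own idea, note that the correct target $z = (1+\mu)x - \mu s y'$ can, for small $\mu>0$, be written as $\alpha x + \beta z''$ with $\alpha,\beta>0$ (where $z'' = (1+\sigma)x - \sigma y'\in C$); then $z/(\alpha+\beta)\in C$ by convexity, so $z\in D$.
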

\begin{proof} Under the assumptions of the proposition, take any $x\in \icr C$ and $y\in D$. There exists some $t>0$ such that $y = t y'$ for some $y'\in C$. Since $x\in \icr C$, there must exist $z'\in C$ such that $x\in (y',z')$, and explicitly there is some $s\in (0,1)$ such that 
\[
x =  s y'+ (1-s)z'.
\]
Now if $t\geq 1$, observe that 
\[
x = \frac{s}{t} t y' + \left(1-\frac{s}{t}\right)  \frac{(1-s)t }{t-s} z' = \frac{s}{t} y +\left(1-\frac{s}{t}\right) z, 
\]
where 
\[
z =  \frac{(1-s)t }{t-s} z' \in D,
\]
since the coefficient at $z'$ is positive. Moreover, since $s/t\in (0,1)$, we conclude that $x\in (y,z)\subseteq D$. 

It remains to consider the case when $t<1$. In this case notice that $v: = 2 y'\in D$, and also $y'\in (y,v)$. Applying Proposition~\ref{prop:technical} to the points $y', z', y, v$ and $x$, we deduce that there exists $z\in (z',v)\subseteq D$ such that $x\in (y,z)\subseteq D$. 

We conclude that $\icr C \subseteq \icr D$.  
\end{proof}

\begin{proof}[Proof of Proposition~\ref{prop:icrsep}] Under the assumptions of the proposition, let $C:= A-B$. By Proposition~\ref{prop:sum} we have $\icr C = \icr A - \icr B$. Moreover, since $\icr A \cap \icr B = \emptyset$, we have $0\notin \icr C$. If $0\in \aff \icr C$, then by Corollary~\ref{cor:sep} $\icr C$ can be properly separated from $\{0\}$. Otherwise let $D$ be the positive hull of $C$. From Proposition~\ref{prop:tech324} we know that $\icr C \subseteq \icr D$, moreover it is evident that $0\notin D$, but $0\in \aff D$. Therefore $\{0\}$ can be separated properly from $\icr D$. Since $D$ is the positive hull of $C$, this means that the same linear functional properly separates $C$ from $\{0\}$.

Since we now found that there exists a linear functional $\varphi:X\to \R$ that properly separates $\icr C$ from $0$, this yields  
\[
\varphi(x)\leq 0 \quad \forall x\in  \icr C, \quad \exists x_0 \in \icr C, \varphi(x_0)< 0,
\]
Since $C\setminus \icr C  \subseteq \lbd \icr C$ (by Proposition~\ref{prop:calcintcore}  \ref{calc:aclandabd} and \ref{calc:aclicr}), we also have  
\[
\varphi(x)\leq 0 \quad \forall x\in  C, \quad \exists x_0 \in C, \varphi(x_0)< 0.
\]
Recall that $C = A-B$. We hence have 
\[
\sup_{u\in A}\varphi(u) \leq \inf_{v\in B} \varphi(v), \quad \inf_{u\in A} \varphi(u) < \sup_{v\in B} \varphi(v), 
\]
and so $A$ and $B$ can be properly separated. 
\end{proof}

Recall that a face $F\unlhd C$ is \emph{exposed} if there exists a supporting hyperplane $H$ to $C$ such that $F = C\cap H$. 

In the finite-dimensional setting whenever a face is not exposed, it is always \emph{eventually exposed}: we can choose a hyperplane exposing some face of $C$, then expose a face of that face and so on, until we reach the required face. It is unclear whether this result generalises in a sensible way to the infinite-dimensional setting (under the assumption that $\icr C \neq \emptyset$). 

We also note that some properties of exposed faces that naturally hold in finite dimensions are not true in infinite-dimensional spaces, for instance, the intersection of two exposed faces is not necessarily exposed \cite{ShortNote} in a Banach space, and in \cite{MinExp} sufficient conditions are obtained for the intersection of exposed faces to be exposed.

\section{Summary}\label{sec:summary}

We have provided an overview of the definitions and properties of intrinsic core, intentionally focusing on the restricted setting of real vector spaces without topological structure. We note once more that the intrinsic core appears in the literature under different names, including the set of relatively absorbing points, the pseudo-relative interior and the set of inner points (see the discussion at the beginning of Section~\ref{sec:intrinsic}). 

We have discussed four equivalent definitions of the intrinsic core, via line segments (Definition~\ref{def:segments}), the cone of feasible directions (Definition~\ref{def:feasibled}), as a core with respect to the affine hull (Defintion~\ref{def:affine}) and in terms of minimal faces (Definition~\ref{def:viaminface}). We have also made explicit the differences in notation and discrepancies in implicit assumptions used by different authors that should aid future work involving intrinsic cores. 

We have provided an extensive collection of properties and calculus rules of intrinsic cores, and discussed in much detail the relation between the intrinsic core and algebraic boundary. We have also provided an overview and elementary proofs of separation results pertaining to this purely algebraic setting. 




\section*{Acknowledgements}

The authors are indebted to the two referees, who not only carefully read an earlier draft of our paper, but also generously provided many insights, including examples and references, that significantly improved both the quality of this work and our understanding of the subject matter.

We are also grateful to the Australian Research Council for financial support provided by means of the Discovery Project ``An optimisation-based framework for non-classical Chebyshev approximation'', DP180100602.

\bibliographystyle{plain}

\bibliography{icr-refs}

\end{document}